\documentclass[10pt]{article}

\usepackage{amsmath}
\usepackage{amsfonts}
\usepackage{mathtools}
\usepackage{amssymb}
\usepackage{here}
\usepackage{fullpage}
\usepackage{mathrsfs}
\usepackage{enumerate}
\usepackage{multirow}
\usepackage{dsfont}
\usepackage{color}
\usepackage{setspace}
\usepackage[numbers,compress]{natbib}


\providecommand{\blue}[1]{\color{black}{#1}\color{black}\hspace{0pt}}

\everymath{\displaystyle}

\newtheorem{theorem}{Theorem}

\newtheorem{proposition}[theorem]{Proposition}

\newtheorem{definition}[theorem]{Definition}

\newtheorem{remark}[theorem]{Remark}

\providecommand{\blue}[1]{\textcolor[rgb]{0.00,0.00,1.00}{#1}}

\DeclareMathOperator*{\col}{col}
\DeclareMathOperator*{\row}{row}

\DeclareMathOperator*{\diag}{diag}

\DeclareMathOperator*{\esssup}{ess\ sup}

\providecommand{\BDeltab}[1]{\ensuremath{\mathcal{B}^{#1}_{\hspace{-1pt}\boldsymbol{\Delta}}}}


\newenvironment{proof}{{\it Proof :~}}{\hfill$\diamondsuit$\\}



\title{Stability and performance analysis of linear positive systems with delays using input-output methods}

\author{Corentin Briat\thanks{Swiss Federal Institute of Technology--Z\"{u}rich, Department of Biosystems Science and Engineering, Basel, Switzerland. Email: briatc@bsse.ethz.ch,corentin@briat.info; url: http://www.briat.info}}

\date{}

\begin{document}

\maketitle


\begin{abstract}
It is known that input-output approaches based on scaled small-gain theorems with constant $D$-scalings and integral linear constraints are non-conservative for the analysis of some classes of linear positive systems interconnected with uncertain linear operators. This dramatically contrasts with the case of general linear systems with delays where input-output approaches provide, in general, sufficient conditions only. Using these results we provide simple alternative proofs for many of the existing results on the stability of linear positive systems with discrete/distributed/neutral time-invariant/-varying delays and linear difference equations.  In particular, we give a simple proof for the characterization of diagonal Riccati stability for systems with discrete-delays and generalize this equation to other types of delay systems. The fact that all those results can be reproved in a very simple way demonstrates the importance and the efficiency of the input-output framework for the analysis of linear positive systems. The approach is also used to derive performance results evaluated in terms of the $L_1$-, $L_2$- and $L_\infty$-gains. It is also flexible enough to be used for design purposes.
\end{abstract}


\section{Introduction}



Positive systems \citep{Farina:00} are a class of systems that are able to represent important processes arising, among others, in epidemiology, biology, biochemistry, ecology; see e.g. \citep{Farina:00,Murray:02,Briat:09h,Briat:12c,Briat:13i,Briat:15e,Briat:16a}. They also naturally arise in the design of interval observers, a class of observers whose error dynamics is purposely governed by a positive system and which allows to estimate upper- and lower-bounds on the state of the system; see e.g. \cite{Gouze:00,Mazenc:11,Briat:15g,Efimov:15,Efimov:16,Efimov:16b}. Finally, they can be used as comparison systems for the analysis of more complex systems, notably, for the analysis of systems with delays; see e.g. \cite{Ngoc:16,Mazenc:16}. Besides these applicability properties, they have been shown to exhibit very interesting theoretical properties. For instance, structured state-feedback controllers and certain instances of the static output-feedback controllers can be designed in a non-conservative way by solving tractable linear programs \citep{Aitrami:07,AitRami:11,Briat:11h}. The $L_1$-, $L_2$- and $L_\infty$-gains of such systems can be also easily characterized in terms of linear \citep{Briat:11g,Briat:11h,Ebihara:11,Rantzer:16} or semidefinite programs \citep{Tanaka:10}. The robust stability analysis of such systems subject to parametric uncertainties can be exactly performed using scaled small-gain results with constant $D$-scalings \cite{Briat:11h,Colombino:15} or integral linear constraints \citep{Briat:11g,Briat:11h,Briat:15cdc}, the latter being the linear counterpart of the integral quadratic constraints \citep{Megretski:93,RantzerMegretski:97}. Finally, it also got recently proved that the scaled-small gain theorem in the $L_2$-framework states a necessary and sufficient condition for the stability of interconnections in the special case of positive systems \citep{Colombino:15}, a fact that does not hold true for general linear systems affected by time-invariant parametric uncertainties; see e.g. \citep{Packard:93}. A possible workaround to this problem is to consider instead the $L_\infty$-framework \citep{Dahleh:95,Khammash:93} where the scaled-small gain theorem with constant $D$-scalings states a necessary and sufficient condition for the robust stability of linear systems.

The influence of delays on the dynamics of linear positive systems and certain classes of nonlinear monotone systems have been well studied and several necessary and sufficient conditions for the stability have been obtained using various approaches; see e.g. \citep{Haddad:04,AitRami:09,Briat:11h,Mason:12,Zhu:15,Shen:15}. We propose here to reprove many of the existing result pertaining on linear systems using a different approach, namely, using \emph{input-output approaches} and, more specifically, using scaled-small gain results with $D$-scalings specialized to linear positive systems \cite{Colombino:15,Briat:11g,Briat:11h} and integral linear constraints results \cite{Briat:11g,Briat:11h,Briat:15cdc}. \blue{Albeit popular (see e.g. \cite{Zhang:01a,Niculescu:01,Knospe:03,GuKC:03,GouaiPeau:06,KnospeR:06,Gouaisbaut:06,Kao:07,Ariba:09,Ariba:10,Gouaisbaut:11,Briat:book1,Fridman:14,Zhu:15,Zhu:15b,Li:16b}), input/output methods do not seem to have been applied so far for the analysis of linear positive systems with delays.}
%
%
%
We notably show that the following statements are rather immediate consequences of scaled-small gain results and integral  linear constraint results:
\begin{enumerate}[(i)]
  \item A linear positive system with discrete constant time-delay is stable if and only if the same system with the delay set to 0 is also stable \citep{Haddad:04,Briat:11h}.
  \item A linear positive system with bounded discrete time-varying delay is stable if and only if the same system with constant delay is also stable \citep{AitRami:09,Briat:11h}. This is generalized in  \citep{Shen:14} to the case of time-varying distributed delays and to the case of arbitrarily large discrete-delays in \citep{Shen:15}.
  \item A linear positive system with constant discrete delay is stable if and only if the associated Riccati equation has diagonal solutions \citep{Mason:12,Aleksandrov:16}.
 \blue{ \item A linear positive coupled differential-difference equation with a single time-varying discrete delay is stable if and only if the same system with the delay set to 0 is also stable \citep{Shen:15b}.}
 \item A linear positive system with discrete time-varying delays is stable if and only if two conditions (which will stated later), known to be only necessary for the stability of general time-delay systems, are satisfied \citep{Zhu:15}.
 \item A linear positive system with distributed time-varying delay is stable if and only if the sum of the matrix acting on the non-delayed state and the integral of the distributed-delay kernel is Hurwitz stable \cite{Shen:14}.
 \blue{\item A linear positive neutral system is stable if and only if the system with zero delay is also stable and it is strongly stable \cite{Ebihara:16,Ebihara:17}. In particular, it is shown that the strong stability of the difference equation together with the stability of the retarded part is equivalent to the stability of the neutral delay equation.}
\end{enumerate}
In this regard, the contribution of the paper is not only the development of some new stability results but also to provide a different, simple and flexible approach for the analysis of linear positive systems with delays. The approach can then be extended to cope with additional uncertainties (e.g. additional parametric uncertainties, sector-nonlinearities, etc.) and can be used for design purposes (e.g. for the design of interval observers \cite{Gouze:00,Mazenc:11,Briat:15g,Efimov:15,Efimov:16,Efimov:16b}).\\

\blue{\noindent\textbf{Outline.} The structure of the paper is as follows: definitions and preliminary results are given in Section \ref{sec:preliminary}. General stability results for uncertain linear positive systems are presented in Section \ref{sec:general} and are applied to linear positive systems with discrete delays in Section \ref{sec:discrete_main}, to linear positive delay-difference equations in Section \ref{sec:difference}, to linear positive coupled differential-difference equations in Section \ref{sec:coupled}, to linear positive systems with distributed delays in Section \ref{sec:distributed_main}, to linear positive neutral systems in Section \ref{sec:neutral}.}\\

\noindent\textbf{Notations.} The cone of positive and nonnegative vectors of dimension $n$ are denoted by $\mathbb{R}_{>0}^n$ and $\mathbb{R}_{\ge0}^n$, respectively. The set of positive integers is given by $\mathbb{Z}_{>0}$. For two real full matrices $A,B$ having the same dimension, the inequalities $A>(\ge)B$ are componentwise while for two real symmetric matrices $A,B$ having the same dimension, the relation $A\prec B$ means that $A-B$ is negative definite. We denote the set of $n\times n$ positive definite diagonal matrices by $\mathbb{D}^n_{\succ0}$. We denote by $\rho(A)$ the spectral radius of the square matrix $A$. The $n$-dimensional vector of ones is denoted by $\mathds{1}_n$. For a vector $v\in\mathbb{R}^n$, $||v||_p$ denotes the standard vector $p$-norm while for a matrix $M\in\mathbb{R}^{n\times m}$, $\textstyle||M||_p:=\max_{||v||_p=1}||Mv||_p$ is the matrix induced $p$-norm. \blue{For some matrices  $M_1,
\ldots,M_n$ of appropriate dimensions, we define $\textstyle\row_{i=1}^N\{M_i\}:=\begin{bmatrix}
  M_1 & \ldots & M_N
\end{bmatrix}$ and $\textstyle\col_{i=1}^N\{M_i\}:=\begin{bmatrix}
  M_1^T & \ldots & M_N^T
\end{bmatrix}^T$.}

\section{Preliminaries}\label{sec:preliminary}

\subsection{System definition}

Let us consider the following linear system:
\begin{equation}\label{eq:mainsyst_gen}
\begin{array}{rcl}
  \dot{x}(t)&=&Ax(t)+Ew(t),\  x(0)=x_0\\
    z(t)&=&Cx(t)+Fw(t)\\
\end{array}
\end{equation}
where $x,x_0\in\mathbb{R}^n$, $w\in\mathbb{R}^{q}$ and $z\in\mathbb{R}^{q}$ are the state of the system, the initial condition, the input and the output, respectively. When $x_0=0$, the above system defines a linear time-invariant convolution operator $\Sigma:w\mapsto z$ given by
\begin{equation}\label{eq:convolution}
  z(t)=\int_0^th(s)w(t-s)ds
\end{equation}
where $h(t)=Ce^{At}E+F\delta(t)$ where $\delta(t)$ is the Dirac distribution and whose transfer function is given by
\begin{equation}
  \widehat{\Sigma} (s):=C(sI-A)^{-1}E+F.
\end{equation}
We then have the following proposition \citep{Farina:00}:
\begin{proposition}
The following statements are equivalent:
\begin{enumerate}[(i)]
  \item The system \eqref{eq:mainsyst_gen} is (internally) positive; i.e. for any $x_0\ge0$ and any $w(t)\ge0$, we have that $x(t)\ge0$ and $z(t)\ge0$ for all $t\ge0$.
  \item The matrix $A$ is Metzler (i.e. all the off-diagonal elements are nonnegative)  and the matrices $E,C,F$ are nonnegative (i.e. all the entries are nonnegative).
\end{enumerate}
\end{proposition}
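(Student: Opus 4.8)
The plan is to prove the two implications separately, using the variation-of-constants (Duhamel) representation of the state together with the classical link between the Metzler property of $A$ and the entrywise nonnegativity of the matrix exponential $e^{At}$. The solution of \eqref{eq:mainsyst_gen} is
\begin{equation*}
  x(t)=e^{At}x_0+\int_0^te^{A(t-s)}Ew(s)ds,
\end{equation*}
and the output is $z(t)=Cx(t)+Fw(t)$; I would treat these two expressions as the basic objects throughout and reduce everything to sign bookkeeping on them.

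For the implication (ii) $\Rightarrow$ (i), the first step is to show that $A$ Metzler implies $e^{At}\ge0$ for all $t\ge0$. I would pick $\alpha>0$ large enough that $M:=A+\alpha I\ge0$, which is possible precisely because $A$ is Metzler so that only the diagonal needs shifting, and then write $e^{At}=e^{-\alpha t}e^{Mt}=e^{-\alpha t}\sum_{k\ge0}\frac{(Mt)^k}{k!}$, a sum of nonnegative matrices scaled by a positive number, hence nonnegative. With $e^{At}\ge0$, $E\ge0$, $x_0\ge0$ and $w(s)\ge0$, every term in the Duhamel formula is nonnegative, so $x(t)\ge0$; then $z(t)=Cx(t)+Fw(t)\ge0$ follows immediately from $C,F\ge0$.

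For the converse (i) $\Rightarrow$ (ii), the idea is to probe the system with well-chosen inputs and initial conditions and read off the sign constraints from nonnegativity of the trajectory, evaluated at $t=0$ or via its right derivative there. Taking $w\equiv0$ and $x_0=e_j$ gives $x(t)=e^{At}e_j\ge0$, so that $e^{At}\ge0$ for all $t$; for $i\ne j$ the $(i,j)$ entry vanishes at $t=0$ and remains nonnegative, so its right derivative $A_{ij}\ge0$, which is exactly the Metzler condition. The same choice yields $z(0)=Ce_j\ge0$, giving $C\ge0$. Taking instead $x_0=0$ and $w\equiv e_j$ gives the right derivative $\dot{x}(0^+)=Ee_j\ge0$ of a nonnegative trajectory starting at the origin, so $E\ge0$, while $z(0)=Fe_j\ge0$ gives $F\ge0$. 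Since $j$ is arbitrary, all four sign conditions follow.

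The only genuinely non-trivial ingredient is the equivalence between $A$ being Metzler and $e^{At}$ being entrywise nonnegative; everything else reduces to the Duhamel formula and elementary sign arguments. I expect the main obstacle to be a matter of rigor rather than of ideas: in the converse direction one must justify passing from \emph{the trajectory is nonnegative for all} $t\ge0$ to a statement about its right derivative at $t=0$ (a nonnegative function vanishing at the origin has nonnegative right derivative there, when that derivative exists), and one must keep the off-diagonal entries of $e^{At}$ separate from the diagonal ones, since only the off-diagonal signs translate into the Metzler property.
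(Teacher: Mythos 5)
Your proof is correct. The paper does not actually prove this proposition --- it is stated as a known result and attributed to Farina and Rinaldi --- so there is no in-text argument to compare against; your proof is the standard one from that literature. Both directions are sound: the sufficiency via $e^{At}=e^{-\alpha t}e^{(A+\alpha I)t}\ge0$ combined with the Duhamel formula, and the necessity via probing with $x_0=e_j$, $w\equiv0$ (reading off the Metzler property from the right derivative of the off-diagonal entries of $e^{At}$ at $t=0$, and $C\ge0$ from $z(0)$) and with $x_0=0$, $w\equiv e_j$ (reading off $E\ge0$ from $\dot{x}(0^+)$ and $F\ge0$ from $z(0)$). The one point of rigor you flag --- that a nonnegative function vanishing at the origin has nonnegative right derivative there --- is exactly the right thing to worry about and is handled correctly.
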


\subsection{Norms and gains}

Let us start with the definition of the $L_p$-norms for signals \citep{Desoer:75a}:
\begin{definition}
Let $w:\mathbb{R}_{\ge0}\to\mathbb{R}^n$, then its $L_p$-norm is given by
\begin{equation}
  ||w||_{L_p}:=\left\{\begin{array}{lcl}
    \displaystyle\left(\int_0^\infty||w(t)||_p^p dt\right)^{1/p}&&\textnormal{when }p\in\mathbb{Z}_{>0}\\
    \esssup_{t\ge0}||w(t)||_\infty&&\textnormal{when }p=\infty\\
  \end{array}\right..
\end{equation}
We say that $w\in L_p$ if $||w||_{L_p}$ is finite. 
\end{definition}

The $L_p$-gain of the convolution operator \eqref{eq:convolution} (or equivalently of the linear system \eqref{eq:mainsyst_gen} with $x_0=0$) defined as
\begin{equation}
  ||\Sigma||_{L_p-L_p}:=\sup_{||w||_{L_p}=1}||\Sigma w||_{L_p},
\end{equation}
is finite if and only if $A$ is Hurwitz stable. In particular, when the system \eqref{eq:mainsyst_gen} is positive, then we have that
\begin{equation}
      ||\Sigma||_{L_p-L_p}=|| \widehat{\Sigma} (0)||_p
\end{equation}
for any $p\in\{1,2,\infty\}$. Note that it is often considered that inputs need to be nonnegative. However, it is immediate to see that for positive systems, the worst-case inputs are necessarily nonnegative since the impulse response is nonnegative as well. Therefore, imposing  this restriction is not necessary when defining the $L_p$-gain of a positive system. Also, it is interesting to note that the same definition also holds for externally positive systems, those systems for which the impulse response $h(t)$ is nonnegative at all times but which are not internally positive.

We finally have the following result that is due to \citep{Stoer:62}:
\begin{proposition}\label{prop:Stoer}
  Let $M\in\mathbb{R}_{\ge0}^{q\times q}$. Then, for all $p\in\{1,2,\infty\}$, we have that
  \begin{equation}
    \rho(M)=\inf_{D\in\mathbb{D}_{\succ0}^q}||DMD^{-1}||_p
  \end{equation}
  and the infimum is a minimum whenever $M$ is irreducible.
\end{proposition}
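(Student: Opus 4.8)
The plan is to prove the two inequalities separately: the easy lower bound $\rho(M)\le\inf_D||DMD^{-1}||_p$ first, and then attainability/exactness via the Perron--Frobenius theorem, treating the reducible case by approximation.

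First I would establish $\rho(M)\le\inf_{D}||DMD^{-1}||_p$. For any $D\in\mathbb{D}_{\succ0}^q$ the matrix $DMD^{-1}$ is similar to $M$, hence $\rho(DMD^{-1})=\rho(M)$; and since $||\cdot||_p$ is an induced (submultiplicative) norm, $\rho(N)\le||N||_p$ for every $N$ (if $Nx=\lambda x$ with $|\lambda|=\rho(N)$ then $|\lambda|\,||x||_p=||Nx||_p\le||N||_p\,||x||_p$). Combining these gives $\rho(M)\le||DMD^{-1}||_p$ for all $D$, and taking the infimum yields the bound. This direction uses neither positivity nor irreducibility of $M$.

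Next I would assume $M$ irreducible and show the infimum is attained with value $\rho(M)$. By Perron--Frobenius, $\rho(M)$ is an eigenvalue admitting a strictly positive right eigenvector $v>0$ and a strictly positive left eigenvector $u>0$. For $p=\infty$ take $D=\diag(d)$ with $d_i=1/v_i$, so that the $i$-th row sum of $DMD^{-1}$ equals $v_i^{-1}(Mv)_i=\rho(M)$ for every $i$; as $||\cdot||_\infty$ is the maximal absolute row sum and $DMD^{-1}\ge0$, this gives $||DMD^{-1}||_\infty=\rho(M)$. The case $p=1$ is symmetric, using $d_i=u_i$ to equalize the column sums via $u_j^{-1}(u^\T M)_j=\rho(M)$. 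The delicate case is $p=2$, where $||N||_2$ is the largest singular value rather than a row or column sum, so a single Perron vector does not suffice. Here I would use the balanced scaling $d_i=\sqrt{u_i/v_i}$, under which the positive vector $s$ with $s_i=\sqrt{u_iv_i}$ becomes simultaneously a right and left eigenvector of $N:=DMD^{-1}$ for $\rho(M)$, i.e. $\sum_j N_{ij}s_j=\rho(M)s_i$ and $\sum_i s_iN_{ij}=\rho(M)s_j$. A weighted Cauchy--Schwarz (Schur-test) estimate then gives the bound: from $|(Nx)_i|^2\le\big(\sum_j N_{ij}s_j\big)\big(\sum_j N_{ij}|x_j|^2/s_j\big)$, summing over $i$ and substituting the two eigenvector identities yields $||Nx||_2^2\le\rho(M)^2||x||_2^2$, so $||N||_2\le\rho(M)$; with the first step this forces equality.

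Finally, for a general (possibly reducible) $M\ge0$ I would argue by approximation. Set $M_\varepsilon:=M+\varepsilon\mathds{1}_q\mathds{1}_q^\T$, which is strictly positive and hence irreducible, so the previous step provides $D_\varepsilon$ with $||D_\varepsilon M_\varepsilon D_\varepsilon^{-1}||_p=\rho(M_\varepsilon)$. Using monotonicity of the norm on nonnegative matrices (for $p\in\{1,2,\infty\}$, $0\le A\le B$ entrywise implies $||A||_p\le||B||_p$) together with $0\le D_\varepsilon MD_\varepsilon^{-1}\le D_\varepsilon M_\varepsilon D_\varepsilon^{-1}$, one gets $\inf_D||DMD^{-1}||_p\le||D_\varepsilon MD_\varepsilon^{-1}||_p\le\rho(M_\varepsilon)$ for every $\varepsilon>0$. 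Letting $\varepsilon\to0$ and invoking continuity of the spectral radius, $\rho(M_\varepsilon)\to\rho(M)$, so the infimum is at most $\rho(M)$; with the first step this gives equality, the infimum no longer being attained in general. The main obstacle throughout is the $p=2$ case, since the $2$-norm does not reduce to a row or column sum and thus cannot be controlled by balancing a single Perron eigenvector; the resolution is the geometric-mean scaling, which yields a common left/right eigenvector and makes the Schur-test estimate applicable.
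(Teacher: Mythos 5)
Your proof is correct and complete. The paper itself offers no proof of this proposition -- it is stated as a known result and attributed to Stoer and Witzgall -- so there is no in-paper argument to compare against; what you have written is the classical diagonal-balancing proof, with the lower bound from similarity invariance plus $\rho(N)\le\|N\|_p$, attainment for irreducible $M$ via the Perron right/left eigenvectors (row-sum balancing for $p=\infty$, column-sum balancing for $p=1$, and the geometric-mean scaling $d_i=\sqrt{u_i/v_i}$ with the Schur test for $p=2$), and the reducible case by perturbing with $\eps\mathds{1}_q\mathds{1}_q^\T$ and using entrywise monotonicity of the induced norms together with continuity of the spectral radius. All steps check out, including the two delicate ones: the verification that $s_i=\sqrt{u_iv_i}$ is simultaneously a left and right Perron vector of the scaled matrix, and the monotonicity $0\le A\le B\Rightarrow\|A\|_2\le\|B\|_2$ needed in the approximation argument.
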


\section{Exact stability results for uncertain linear positive systems and interconnections of positive systems}\label{sec:general}

The aim of this section is to recall important results regarding the stability of uncertain linear positive systems and the stability of interconnections of linear positive systems. Both theoretical and computational results are provided, the latter being stated in terms of linear or semidefinite programs.

\subsection{Stability conditions for uncertain systems in LFT form}

We are interested here in the stability of the following uncertain systems in linear fractional form
 \begin{equation}\label{eq:uncertain}
  \begin{array}{rcl}
    \dot{x}(t)&=&Ax(t)+Ew(t)\\
    z(t)&=&Cx(t)+Fw(t)\\
    w(t)&=&\Delta z(t),\ \Delta\in\BDeltab{p}
  \end{array}
\end{equation}
where
\begin{equation}\label{eq:Bdelta}
   \BDeltab{p}:=\left\{\Delta\in\mathbb{C}^{q\times q}\left|\begin{array}{l}
   \Delta=\diag(\Delta_1,\ldots,\Delta_N),||\Delta_i||_p\le1\\
      \Delta_i\in\mathbb{C}^{q_i\times q_i},\ i=1,\ldots,N
   \end{array}\right.\right\}\\
\end{equation}
with $p\in\{1,2,\infty\}$ and $\textstyle q=\sum_{i=1}^Nq_i$. Associated with this uncertainty structure, we define the following set of constant $D$-scalings:
 \begin{equation}
  \mathcal{D}_\Delta:=\left\{D\in\mathbb{R}^{q\times q}\left|\begin{array}{c}
     D=\diag(d_1I_{q_1},\ldots,d_NI_{q_N})\\ d_i>0,\ i=1,\ldots,N
  \end{array}\right.\right\}.
\end{equation}
The role of the scalings is to capture the structure of the uncertainty set through the property that $\Delta D=D\Delta$ for all $\Delta\in\BDeltab{p}$ and all $D\in\mathcal{D}_\Delta$. Such scalings allow us to reduce the conservatism of the small-gain theorem and, in some certain cases, make the conservatism vanish completely. This latter effect will happen in the context of linear positive systems and will allow us to derive nonconservative stability results.

\subsubsection{General theoretical result}

We have the following result:
\begin{proposition}
Assume that $(A,E,C,F)$ is internally positive. Then, the following statements are equivalent:
\begin{enumerate}[(i)]
  \item The uncertain system \eqref{eq:uncertain} is asymptotically stable for all $\Delta\in\BDeltab{p}$.
  \item $A$ is Hurwitz stable and
  \begin{equation}
    \inf_{D\in\mathcal{D}_\Delta}||D\widehat{\Sigma} (0)D^{-1}||_p<1.
  \end{equation}
\end{enumerate}
Moreover, in the repeated scalar uncertainties (i.e. $\Delta_i=\delta_i I_{q_i}$, $\delta_i\in\mathbb{R}_{>0}$, $i=1,\ldots,N$), then the above statements are equivalent to
\begin{enumerate}[(i)]
\setcounter{enumi}{2}
\item $A$ is Hurwitz stable and
    \begin{equation}
    \rho(\widehat{\Sigma} (0))<1.
   \end{equation}
\end{enumerate}
\end{proposition}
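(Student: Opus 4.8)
The plan is to route everything through the scaled small-gain theorem and to exploit the fact, special to positive systems, that both the frequency worst case and the perturbation worst case occur at $s=0$ and can be realized by a real, nonnegative $\Delta$. First I would dispose of the Hurwitz requirement: taking $\Delta=0\in\BDeltab{p}$ collapses the loop to $\dot x=Ax$, so asymptotic stability in (i) forces $A$ to be Hurwitz, while $A$ Hurwitz is assumed outright in (ii). With $A$ Metzler and Hurwitz one has $-A^{-1}\ge0$, and since $C,E,F\ge0$ the DC gain $\widehat{\Sigma}(0)=F-CA^{-1}E$ is a well-defined \emph{nonnegative} matrix $G$, which is exactly the object to which Proposition~\ref{prop:Stoer} applies. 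I would also record the positivity estimate that, entrywise, $|\widehat{\Sigma}(i\omega)|\le\widehat{\Sigma}(0)$, obtained from $h\ge0$ via $|\int_0^\infty h(t)e^{-i\omega t}\,dt|\le\int_0^\infty h(t)\,dt$; this is what makes $s=0$ the governing frequency and underlies the gain formula $||\Sigma||_{L_p-L_p}=||\widehat{\Sigma}(0)||_p$.

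For the direction (ii)$\Rightarrow$(i) I would invoke the scaled small-gain theorem. Choosing $D\in\mathcal{D}_\Delta$ with $||D\widehat{\Sigma}(0)D^{-1}||_p<1$, and using that every such $D$ commutes with every $\Delta\in\BDeltab{p}$, the interconnection $(\Sigma,\Delta)$ is stable if and only if the conjugated one $(D\Sigma D^{-1},\Delta)$ is. Conjugation by the positive diagonal $D$ preserves positivity, so $D\Sigma D^{-1}$ is again positive with transfer function $D\widehat{\Sigma}(s)D^{-1}$, and by the positive-system gain formula its $L_p$-gain equals $||D\widehat{\Sigma}(0)D^{-1}||_p<1$. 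Since the static block-diagonal operator $\Delta$ has $L_p$-gain $\max_i||\Delta_i||_p\le1$, the small-gain theorem delivers robust stability.

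The hard part, where positivity does the real work, is the converse (i)$\Rightarrow$(ii): the non-conservatism of the $D$-scaled test. I would argue the contrapositive, constructing a destabilizing $\Delta\in\BDeltab{p}$ whenever $\inf_{D\in\mathcal{D}_\Delta}||DGD^{-1}||_p\ge1$. The strategy is to reduce the dynamic problem to a static one: the estimate $|\widehat{\Sigma}(i\omega)|\le\widehat{\Sigma}(0)$ makes $s=0$ the worst frequency, so it suffices to destabilize the constant interconnection governed by $G$; monotonicity of the Metzler closed-loop matrix $A+E\Delta(I-F\Delta)^{-1}C$ in $\Delta\ge0$ then lets me take the worst-case perturbation real and nonnegative. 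The destabilizing $\Delta$ is read off from the Perron--Frobenius eigenvector of the optimally scaled $G$, placed on the boundary $||\Delta_i||_p=1$. I expect this alignment of the Perron eigenvector with an admissible boundary perturbation to be the main obstacle, since it is precisely the step that fails for general (non-positive) systems and forces input-output methods there to be only sufficient.

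For the ``moreover'' part I would specialize to $\Delta_i=\delta_iI_{q_i}$. The key observation is that a repeated-scalar block commutes with \emph{any} diagonal scaling, so the commuting scaling set enlarges from $\mathcal{D}_\Delta$ to all of $\mathbb{D}_{\succ0}^q$; Proposition~\ref{prop:Stoer} then identifies $\inf_{D\in\mathbb{D}_{\succ0}^q}||DGD^{-1}||_p$ with $\rho(G)=\rho(\widehat{\Sigma}(0))$. Re-running the equivalence of (i) and (ii) with these richer scalings replaces the scaled-norm condition by $\rho(\widehat{\Sigma}(0))<1$, yielding (iii). Equivalently, I could verify (i)$\iff$(iii) directly, since monotonicity makes $\Delta=I$ the worst case and $A+E(I-F)^{-1}C$ is Hurwitz if and only if $\rho(\widehat{\Sigma}(0))<1$ by a standard M-matrix identity; transitivity with the general equivalence then closes the three-way statement.
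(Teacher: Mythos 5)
Your overall architecture is sound, and two of the three pieces are essentially correct: the sufficiency direction (ii) $\Rightarrow$ (i) via conjugation by a commuting $D\in\mathcal{D}_\Delta$ and the DC-gain formula $\|\Sigma\|_{L_p-L_p}=\|\widehat{\Sigma}(0)\|_p$, and the ``moreover'' part via enlargement of the commutant to all of $\mathbb{D}^{q}_{\succ0}$ followed by Proposition~\ref{prop:Stoer}. Be aware, though, that the paper does not actually prove the equivalence (i) $\Leftrightarrow$ (ii): it imports it wholesale from \cite{Colombino:15} for $p=2$ and from \cite{Dahleh:95,Khammash:93} for $p=\infty$ (with $p=1$ by duality), and only argues (ii) $\Leftrightarrow$ (iii) itself, via Proposition~\ref{prop:Stoer}. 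So you are attempting strictly more than the paper does, and the comparison has to be against the cited results.

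The genuine gap is exactly the step you flag as ``the main obstacle'' and then do not close: the necessity direction (i) $\Rightarrow$ (ii). What is required is the lemma that for the nonnegative matrix $G=\widehat{\Sigma}(0)$ and the given block structure, $\inf_{D\in\mathcal{D}_\Delta}\|DGD^{-1}\|_p\ge1$ implies the existence of a nonnegative $\Delta\in\BDeltab{p}$ with $\det(I-G\Delta)=0$. ``Reading off the Perron--Frobenius eigenvector of the optimally scaled $G$'' does not suffice as stated, for two reasons. First, the infimum need not be attained when $G$ is reducible (Proposition~\ref{prop:Stoer} guarantees a minimizer only in the irreducible case), so ``the optimally scaled $G$'' may not exist and you need either a limiting argument or a reduction to an irreducible principal submatrix. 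Second, even at an optimal $D$, the Perron vector provides a single $z$ with $\|DGD^{-1}z\|_p\ge\|z\|_p$, whereas building an admissible block-diagonal perturbation requires the \emph{blockwise} inequalities $\|(Gz)_i\|_p\ge\|z_i\|_p$ to hold simultaneously for every block $i$, so that each $\Delta_i$ of induced $p$-norm at most one can map $(Gz)_i$ back onto $z_i$. Establishing this simultaneity across blocks is where the positive-systems-specific work lies; it is obtained by LP duality (alignment of left and right Perron vectors) for $p\in\{1,\infty\}$ and by an SDP/Lagrangian duality argument for $p=2$, and it cannot be replaced by monotonicity of the closed-loop Metzler matrix, which only handles the repeated-scalar case. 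Until that lemma is proved or explicitly imported from \cite{Colombino:15,Khammash:93}, your proof is incomplete at its mathematical core.
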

\begin{proof}
 The equivalence (i) $\Leftrightarrow$ (ii) has been proved in \citep{Colombino:15} in the case $p=2$. The case $p=\infty$ has been proved, for instance, in \citep{Dahleh:95,Khammash:93}. Finally, the case $p=1$ is dual to the case $p=\infty$.  The equivalence (ii) $\Leftrightarrow$ (iii) follows from Proposition \ref{prop:Stoer}.
\end{proof}

\blue{Interestingly, in the case $p=2$, the internal stability condition on the system can be relaxed into the condition of positive domination \cite{Rantzer:12,Colombino:15}. When $p=1$ or $p=\infty$, the internal positivity condition can be substituted by an external positivity condition together with an assumption on the initial condition in order to preserve the positivity of the output (i.e. $x_0$ must be such that $Ce^{At}x_0\ge0$ for all $t\ge0$). Finally, since eventually positive systems can be used to efficiently represent externally positive systems, some of the results for internally positive systems are expected to remain true for these systems as well; see e.g. \cite{Sootla:15,Altafini:16}.}

\subsubsection{$L_1$ scaled small-gain theorem}

\blue{The following result can be seen as an extension of the $L_1$ results in \citep{Briat:11g,Ebihara:11,Briat:11h}:}
\begin{theorem}\label{th:ebihara}
Assume that $(A,E,C,F)$ is internally positive. Then, the following statements are equivalent:
\begin{enumerate}[(i)]
  \item The uncertain system \eqref{eq:uncertain} is asymptotically stable for all $\Delta\in\BDeltab{1}$.
  \item There exist a positive vector $\lambda\in\mathbb{R}_{>0}^{n}$ and a matrix $D\in\mathcal{D}_{\Delta}$ such that 
      \begin{equation}
      \begin{bmatrix}
        \lambda\\
        D\mathds{1}_q
      \end{bmatrix}^T\begin{bmatrix}
        A & E\\
        C & F-I_q
      \end{bmatrix}<0.
      \end{equation}
\end{enumerate}
\end{theorem}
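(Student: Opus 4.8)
The plan is to reduce the statement to the scaled small-gain condition already established in the preceding proposition (with $p=1$) and then to re-express that condition as the linear program in (ii) by eliminating the Lyapunov-type variable $\lambda$. Concretely, by the preceding proposition statement (i) holds if and only if $A$ is Hurwitz stable and $\inf_{D\in\mathcal{D}_\Delta}||D\widehat{\Sigma}(0)D^{-1}||_1<1$, so it suffices to show that this pair of conditions is equivalent to the existence of $\lambda\in\mathbb{R}_{>0}^n$ and $D\in\mathcal{D}_\Delta$ satisfying the displayed inequality.

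First I would exploit positivity to turn the induced-norm condition into a linear one. Since $A$ is Metzler and Hurwitz, $-A^{-1}\ge0$ and hence $\widehat{\Sigma}(0)=-CA^{-1}E+F\ge0$. For a nonnegative matrix the $L_1$-induced norm equals the largest column sum, so writing $\mu:=D\mathds{1}_q>0$ (a positive vector with the block-constant structure inherited from $\mathcal{D}_\Delta$) and using $\mathds{1}_q^TD=\mu^T$ together with $D^{-1}e_j=\mu_j^{-1}e_j$, the $j$-th column sum of $D\widehat{\Sigma}(0)D^{-1}$ equals $\mu^T\widehat{\Sigma}(0)e_j/\mu_j$. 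Therefore $||D\widehat{\Sigma}(0)D^{-1}||_1<1$ is equivalent to $\mu^T\widehat{\Sigma}(0)<\mu^T$ componentwise, and consequently $\inf_{D}||D\widehat{\Sigma}(0)D^{-1}||_1<1$ is equivalent to the existence of some $\mu=D\mathds{1}_q$ with $\mu^T\widehat{\Sigma}(0)<\mu^T$.

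It then remains to match this with (ii). Expanding the displayed inequality along its two block columns gives $\lambda^TA+\mu^TC<0$ and $\lambda^TE+\mu^T(F-I_q)<0$, with $\mu=D\mathds{1}_q$. For the direction (ii) $\Rightarrow$ stability, the first inequality forces $\lambda^TA<-\mu^TC\le0$, so $A$ is Hurwitz; right-multiplying the same inequality by $-A^{-1}\ge0$ (whose columns are all nonzero, since $-A^{-1}e_j=0$ would force $e_j=0$) yields $-\mu^TCA^{-1}<\lambda^T$, and substituting this bound into the second inequality after right-multiplication by $E\ge0$ produces $\mu^T\widehat{\Sigma}(0)-\mu^T\le\lambda^TE+\mu^T(F-I_q)<0$, which is precisely the column-sum condition. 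For the converse, given $\mu$ with $\mu^T\widehat{\Sigma}(0)<\mu^T$ I would set $\lambda^T:=-(\mu^TC+\delta\mathds{1}_q^T)A^{-1}$ for a small $\delta>0$; since $\mu^TC+\delta\mathds{1}_q^T>0$ and every column of $-A^{-1}$ is nonnegative and nonzero, this $\lambda$ is strictly positive and satisfies $\lambda^TA+\mu^TC=-\delta\mathds{1}_q^T<0$, while the second inequality reduces to $\mu^T\widehat{\Sigma}(0)-\mu^T+\delta\mathds{1}_q^T(-A^{-1})E<0$, which holds for $\delta$ small by continuity because the $\delta=0$ term is strictly negative.

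I expect the main obstacle to be the careful bookkeeping of strictness through these nonnegative multiplications: multiplying a strict vector inequality by a nonnegative matrix preserves strict negativity only when the relevant columns are nonzero, and in the converse direction the slack $\delta$ must be introduced precisely so that $\lambda$ is \emph{strictly} positive and the first block inequality is strict, while the strictness of $\mu^T\widehat{\Sigma}(0)<\mu^T$ is what absorbs the resulting nonnegative perturbation in the second inequality. Everything else is routine linear algebra once $\widehat{\Sigma}(0)=-CA^{-1}E+F$ and $-A^{-1}\ge0$ are in hand.
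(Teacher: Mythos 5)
Your proposal is correct, and its skeleton is the same as the paper's: reduce statement (i) to the scaled small-gain condition $\inf_{D\in\mathcal{D}_\Delta}\|D\widehat{\Sigma}(0)D^{-1}\|_1<1$ via the preceding proposition, and then show this is equivalent to the linear program in (ii). The difference is one of self-containedness rather than of route: the paper disposes of the key step by citing \cite{Briat:11h} for the linear-programming characterization of $\|D\widehat{\Sigma}(0)D^{-1}\|_1<1$ and then merely right-multiplies the second inequality by $D$, whereas you reprove that characterization from first principles --- using the column-sum formula for the $L_1$-induced norm of the nonnegative matrix $\widehat{\Sigma}(0)=-CA^{-1}E+F$, the nonnegativity of $-A^{-1}$ for a Hurwitz Metzler $A$, and an explicit $\delta$-perturbation to construct a strictly positive $\lambda$ in the converse direction. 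Your bookkeeping of strictness (nonzero columns of $-A^{-1}$ in one direction, the $\delta$-slack absorbed by the strict inequality $\mu^T\widehat{\Sigma}(0)<\mu^T$ in the other) is exactly the content hidden in the citation, so your version is more complete at the cost of length. Two cosmetic points: in the converse construction the perturbation should be $\delta\mathds{1}_n^T$ rather than $\delta\mathds{1}_q^T$, since $\mu^TC$ is a row vector of length $n$; and it is worth stating explicitly that $\lambda^TA<0$ with $\lambda>0$ and $A$ Metzler implies $A$ Hurwitz (the standard linear Lyapunov test), which you use implicitly. Neither affects correctness.
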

\begin{proof}
 Following \citep{Briat:11h}, we have that $||D\widehat{\Sigma} (0)D^{-1}||_1<1$ if and only if there here exists a positive vector $\lambda\in\mathbb{R}^{n}_{>0}$ such that the inequalities
  \begin{equation}
    \begin{array}{rcl}
      \lambda^TA+\mathds{1}^T_qDC&<&0\\
      \lambda^TED^{-1}+\mathds{1}^TDFD^{-1}-\mathds{1}_{q}^T&<&0
    \end{array}
  \end{equation}
  hold. Right-multiplying the second inequality by $D\in\mathbb{D}_{\succ0}^n$ yields the result.
\end{proof}

\subsubsection{$L_2$ scaled small-gain theorem}

\blue{The following result, proved in \citep{Colombino:15}, is the positive version of the well-known $L_2$ scaled small-gain theorem \cite{Packard:93,Dullerud:00} and is based on the Kalman-Yakubovich-Popov (KYP) Lemma for positive systems \citep{Shorten:09,Tanaka:10}:}
\begin{theorem}\label{th:colombino}
Assume that $(A,E,C,F)$ is internally positive. Then, the following statements are equivalent:
\begin{enumerate}[(i)]
  \item The uncertain system \eqref{eq:uncertain} is asymptotically stable for all $\Delta\in\BDeltab{2}$.
  \item There exist matrices $P\in\mathbb{D}_{\succ0}^{n}$ and $D\in\mathcal{D}_{\Delta}$ such that
      \begin{equation}
       \begin{bmatrix}
       A^TP+PA & PE & C^TD\\
        \star & -D & F^TD\\
        \star & \star & -D
        \end{bmatrix}\prec0.
      \end{equation}
\end{enumerate}
\end{theorem}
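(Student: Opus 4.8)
The plan is to reduce statement (i) to a scaled $L_2$-gain condition on a single nominal transfer matrix, and then convert that condition into the stated LMI through the Kalman--Yakubovich--Popov (KYP) lemma for positive systems, followed by routine Schur-complement and congruence manipulations. First I would invoke the general theoretical result stated at the beginning of Section~\ref{sec:general} (the unnumbered Proposition preceding Theorem~\ref{th:ebihara}), specialized to $p=2$: since $(A,E,C,F)$ is internally positive, statement (i) holds if and only if $A$ is Hurwitz stable and there exists a scaling $D_0\in\mathcal{D}_\Delta$ with $||D_0\widehat{\Sigma}(0)D_0^{-1}||_2<1$. This moves the problem away from the whole uncertainty set $\BDeltab{2}$ and onto a single static gain.

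Next I would reinterpret $||D_0\widehat{\Sigma}(0)D_0^{-1}||_2$ as an honest $L_2$-gain. The scaled quadruple $(A,ED_0^{-1},D_0C,D_0FD_0^{-1})$ is again internally positive: $A$ remains Metzler, and the other three matrices stay nonnegative under multiplication by the positive diagonal matrices $D_0$ and $D_0^{-1}$. Its transfer function evaluated at $0$ is exactly $D_0\widehat{\Sigma}(0)D_0^{-1}$, so by the fact recalled in Section~\ref{sec:preliminary} that the $L_2$-gain of a positive system equals the matrix $2$-norm of its static gain $\widehat{\Sigma}(0)$, the $L_2$-gain of this scaled system is precisely $||D_0\widehat{\Sigma}(0)D_0^{-1}||_2$. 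Hence the condition from the first step is equivalent to the scaled positive system having $L_2$-gain strictly below $1$, finiteness of the gain already encoding Hurwitz stability of $A$.

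I would then apply the bounded-real (KYP) lemma \emph{for positive systems} \citep{Shorten:09,Tanaka:10}, whose crucial feature is that the Lyapunov certificate may be taken \emph{diagonal}: the scaled system has $L_2$-gain below $1$ if and only if there exists $P\in\mathbb{D}^{n}_{\succ0}$ solving the bounded-real LMI built from the scaled data $(A,ED_0^{-1},D_0C,D_0FD_0^{-1})$. A congruence transformation by $\diag(I_n,D_0)$, together with the relabelling $D:=D_0^2\in\mathcal{D}_\Delta$, turns this inequality into $\begin{bmatrix} A^TP+PA+C^TDC & PE+C^TDF \\ \star & F^TDF-D \end{bmatrix}\prec0$, and a Schur complement with respect to the negative-definite block $-D$ reconstitutes exactly the $3\times3$ LMI of statement (ii). Conversely, feasibility of that LMI forces its $(1,1)$ principal block $A^TP+PA$ to be negative definite, so $A$ is Hurwitz by Lyapunov's theorem, and reversing the Schur and congruence steps recovers the scaled bounded-real inequality. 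All these manipulations are reversible because $\mathcal{D}_\Delta$ is closed under taking squares and square roots, so passing between the small-gain scaling $D_0$ and the LMI scaling $D=D_0^2$ is harmless.

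The genuinely non-trivial ingredient, and the step I expect to be the main obstacle, is the appeal to the \emph{diagonal} version of the KYP lemma. For a general linear system the bounded-real lemma only guarantees a symmetric positive-definite $P$, and the resulting certificate would not have the diagonal structure that makes statement (ii) both meaningful in the present positive-systems context and computationally attractive; it is the positivity of $(A,E,C,F)$ (and of the scaled system) that upgrades the Lyapunov matrix to a diagonal one. Everything else, namely the reduction to a scaled nominal gain, the identification of that gain with the $L_2$-gain of a positive system, and the Schur/congruence bookkeeping that tracks the squaring of the scaling, is routine once this lemma is in hand.
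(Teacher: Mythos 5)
Your argument is correct and follows exactly the route the paper indicates: the paper gives no proof of Theorem \ref{th:colombino} (it defers to \citep{Colombino:15}) but explicitly flags the KYP/bounded-real lemma for positive systems \citep{Shorten:09,Tanaka:10} as the underlying ingredient, which is precisely the diagonal-Lyapunov step you isolate, and your reduction via the unnumbered proposition, the identification of $||D_0\widehat{\Sigma}(0)D_0^{-1}||_2$ with the $L_2$-gain of the scaled positive quadruple, and the congruence-plus-Schur bookkeeping with $D=D_0^2$ all check out. In short, you have supplied in full the proof the paper only cites.
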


\blue{Alternative formulations can also be obtained on the basis of the linear KYP lemma for positive systems proved in \cite{Rantzer:16}:
\begin{theorem}\label{th:rantzer}
Assume that $(A,E,C,F)$ is internally positive. Then, the following statements are equivalent:
\begin{enumerate}[(i)]
  \item The uncertain system \eqref{eq:uncertain} is asymptotically stable for all $\Delta\in\BDeltab{2}$.
  \item There exist $\lambda,\mu\in\mathbb{R}_{>0}^n$, $\nu\in\mathbb{R}_{>0}^q$ and $D\in\mathcal{D}_{\Delta}$ such that $A\lambda+E\nu<0$ and
  \begin{equation}\label{eq:dhsihdskdsjlkdjskdjldjsl}
    \begin{bmatrix}
      \lambda\\
      \nu
    \end{bmatrix}^T\begin{bmatrix}
      C^TDC & C^TDF\\
      F^TDC \quad& F^TDF-D
    \end{bmatrix}+\mu^T\begin{bmatrix}
      A \quad& E
    \end{bmatrix}<0
  \end{equation}
  hold.
\end{enumerate}
\end{theorem}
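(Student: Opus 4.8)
The plan is to prove the equivalence by routing both statements through the scaled small-gain condition that already underlies Theorem~\ref{th:colombino}, and then to convert the semidefinite certificate of that theorem into the linear certificate of (ii) by means of the linear KYP lemma for positive systems of \cite{Rantzer:16}. Concretely, by the scaled small-gain theorem in the $L_2$ setting, statement (i) is equivalent to the existence of $D\in\mathcal{D}_\Delta$ such that $A$ is Hurwitz stable and $||D^{1/2}\widehat\Sigma(0)D^{-1/2}||_2<1$; since $\mathcal{D}_\Delta$ is closed under taking positive-diagonal square roots, this is the same family of conditions that is characterized in Theorem~\ref{th:colombino}. The scaling $D$ appearing in (ii) should therefore be read as the weight of the supply rate $z^TDz-w^TDw$, whose dissipativity over $L_2$ encodes exactly this contraction property of the scaled operator.

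First, I would make the supply-rate reading of \eqref{eq:dhsihdskdsjlkdjskdjldjsl} explicit. Writing $M=\begin{bmatrix}C&F\end{bmatrix}^TD\begin{bmatrix}C&F\end{bmatrix}-\diag(0,D)$, one has $\begin{bmatrix}x\\w\end{bmatrix}^TM\begin{bmatrix}x\\w\end{bmatrix}=z^TDz-w^TDw$, so \eqref{eq:dhsihdskdsjlkdjskdjldjsl} reads $\begin{bmatrix}\lambda\\\nu\end{bmatrix}^TM+\mu^T\begin{bmatrix}A&E\end{bmatrix}<0$. Splitting this row inequality into its $x$- and $w$-blocks and transposing gives the two conditions $A^T\mu+C^TD(C\lambda+F\nu)<0$ and $E^T\mu+F^TD(C\lambda+F\nu)-D\nu<0$. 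These are precisely the stationarity/dissipation inequalities one obtains from the linear KYP lemma applied to the scaled realization, with $\mu$ playing the role of a linear storage certificate and $\lambda,\nu$ the Perron--Frobenius directions in which the quadratic supply rate is tested at the DC operating point $\zeta:=C\lambda+F\nu\ge0$. Note in passing that the first block yields $A^T\mu<0$ with $\mu>0$, which already forces $A$ to be Hurwitz, consistently with the role of $A\lambda+E\nu<0$.

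Second, I would invoke the linear KYP lemma of \cite{Rantzer:16} to establish the equivalence between the semidefinite condition of Theorem~\ref{th:colombino} and the feasibility of the linear inequalities just described: for positive systems, the quadratic (diagonal) storage certificate $P$ of Theorem~\ref{th:colombino} can be replaced by a linear certificate together with a single pair of Perron--Frobenius test vectors, which is the content of that lemma. The auxiliary inequality $A\lambda+E\nu<0$ fixes the admissible steady state $\zeta=C\lambda+F\nu$ and independently guarantees Hurwitz stability in the converse direction (indeed $A\lambda<A\lambda+E\nu<0$ with $\lambda>0$). Carefully substituting the scaled data $(A,ED^{-1/2},D^{1/2}C,D^{1/2}FD^{-1/2})$ and redistributing the $D^{\pm1/2}$ factors into $\nu$ and into the supply-rate weight $D$ then returns \eqref{eq:dhsihdskdsjlkdjskdjldjsl} verbatim.

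I expect the main obstacle to be the faithful bookkeeping in this last step: one must verify that the linear KYP certificate for the scaled operator, once the scaling factors are reabsorbed, produces exactly the bilinear form \eqref{eq:dhsihdskdsjlkdjskdjldjsl} with \emph{strictly} positive $\lambda,\mu,\nu$, rather than a merely nonnegative or differently-weighted version. In particular, checking that testing the matrix inequality $\widehat\Sigma(0)^TD\,\widehat\Sigma(0)\prec D$ on the single nonnegative direction supplied by the lemma is sufficient---which rests on the fact that, for the nonnegative DC gain $\widehat\Sigma(0)$, the induced $2$-norm is attained at a nonnegative singular vector---is where the positivity structure is genuinely used and where the argument must be made rigorous.
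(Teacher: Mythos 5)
Your proposal follows essentially the same route as the paper's proof: both apply the linear KYP lemma of \cite{Rantzer:16} to the scaled realization $(A,ED^{-1/2},D^{1/2}C,D^{1/2}FD^{-1/2})$ of $D^{1/2}\widehat{\Sigma}(s)D^{-1/2}$, identify the resulting linear conditions with the semidefinite certificate of Theorem~\ref{th:colombino}, and recover \eqref{eq:dhsihdskdsjlkdjskdjldjsl} via the substitution $\nu=D^{-1/2}\tilde\nu$ together with right-multiplication by $\diag(I,D^{1/2})$. The supply-rate reading and the Perron--Frobenius interpretation you add are consistent glosses on the same argument, and the bookkeeping you flag as the main obstacle is exactly the step the paper carries out.
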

\begin{proof}
Applying  the linear version of the KYP Lemma from \cite{Rantzer:16} on the scaled system $D^{1/2}\widehat{\Sigma}(s) D^{-1/2}$ where  $D\in\mathcal{D}_{\Delta}$ yields the conditions $A\lambda+ED^{-1/2}\tilde{\nu}<0$ and
  \begin{equation}
    \begin{bmatrix}
      \lambda\\
      \tilde\nu^T
    \end{bmatrix}\begin{bmatrix}
      C^TDC & C^TDFD^{-1/2}\\
      D^{-1/2}F^TDC \quad& D^{-1/2}F^TDFD^{-1/2}-I
    \end{bmatrix}+\mu^T\begin{bmatrix}
      A \quad& ED^{-1/2}
    \end{bmatrix}<0
  \end{equation}
  for some positive vectors $\lambda,\nu$ and $\mu$. Note that these conditions are equivalent to saying that the LMI condition in Theorem \ref{th:colombino} holds (possibly with a different matrix $D$). The final result is then obtained by letting $\nu=D^{-1/2}\tilde\nu$ and by multiplying the above inequality from the right by the matrix $\diag(I,D^{1/2})$.
\end{proof}
Unfortunately, the condition \eqref{eq:dhsihdskdsjlkdjskdjldjsl} is not convex because of the product between $\lambda,\nu$ and $D$. In this regard, this condition may not be very convenient to work with for establishing the stability of the uncertain system \eqref{eq:uncertain} with $\Delta\in\BDeltab{2}$.}

\blue{Finally, it is also interesting to mention the following novel result based on a result in \cite{Naghnaeian:14}:
\begin{theorem}\label{th:naghnaeian}
Assume that $(A,E,C,F)$ is internally positive. Then, the following statements are equivalent:
\begin{enumerate}[(i)]
  \item The uncertain system \eqref{eq:uncertain} is asymptotically stable for all $\Delta\in\BDeltab{2}$.
  \item There exist $\zeta\in\mathbb{R}_{>0}^{n\times q}$ and $D\in\mathcal{D}_{\Delta}$ such that $\zeta^TA+DC<0$ and
  \begin{equation}
    \begin{bmatrix}
    -D \quad & \zeta^TE+DF\\
    \star & -D
    \end{bmatrix}\prec0
  \end{equation}
  hold.
\end{enumerate}
\end{theorem}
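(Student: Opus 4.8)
The plan is to follow verbatim the template of the proof of Theorem~\ref{th:rantzer}, replacing the linear KYP lemma of \citep{Rantzer:16} by the characterization of \citep{Naghnaeian:14}: I would apply that characterization to the scaled realization $D^{1/2}\widehat{\Sigma}(s)D^{-1/2}$ and then reshape the resulting inequalities into the stated form by a congruence. First I would record the reduction that makes this possible. By the general theoretical result of Section~\ref{sec:general} (the equivalence between statement~(i) and the scaled small-gain condition $\inf_{D\in\mathcal{D}_\Delta}||D\widehat{\Sigma}(0)D^{-1}||_2<1$) together with the invariance of $\mathcal{D}_\Delta$ under $D\mapsto D^{1/2}$, statement~(i) is equivalent to the existence of some $D\in\mathcal{D}_\Delta$ for which the internally positive realization $(A,ED^{-1/2},D^{1/2}C,D^{1/2}FD^{-1/2})$ --- whose transfer function is precisely $D^{1/2}\widehat{\Sigma}(s)D^{-1/2}$ --- is Hurwitz stable and has $L_2$-gain strictly smaller than one.

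Next I would invoke \citep{Naghnaeian:14} in the form: an internally positive system $(\bar A,\bar E,\bar C,\bar F)$ has $\bar A$ Hurwitz stable and $L_2$-gain less than one if and only if there is a positive matrix $\tilde\zeta\in\mathbb{R}_{>0}^{n\times q}$ with $\tilde\zeta^T\bar A+\bar C<0$ and $\begin{bmatrix} -I & \tilde\zeta^T\bar E+\bar F\\ \star & -I\end{bmatrix}\prec0$. Specializing to $(\bar A,\bar E,\bar C,\bar F)=(A,ED^{-1/2},D^{1/2}C,D^{1/2}FD^{-1/2})$ gives $\tilde\zeta^TA+D^{1/2}C<0$ and $\begin{bmatrix} -I & \tilde\zeta^TED^{-1/2}+D^{1/2}FD^{-1/2}\\ \star & -I\end{bmatrix}\prec0$. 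Setting $\zeta:=\tilde\zeta D^{1/2}$, which is again positive because $D^{1/2}$ is a positive diagonal matrix, and left-multiplying the first inequality by $D^{1/2}$ turns it into $\zeta^TA+DC<0$; applying the congruence $\diag(D^{1/2},D^{1/2})$ to the LMI turns each diagonal block into $-D$ and the off-diagonal block into $D^{1/2}\tilde\zeta^TE+DF=\zeta^TE+DF$, which is exactly the claimed matrix inequality. Since the rescaling, the substitution $\zeta=\tilde\zeta D^{1/2}$, and the congruence are all invertible, both implications (i)$\Leftrightarrow$(ii) follow at once; the Hurwitz requirement of the Naghnaeian characterization is not an extra assumption, since $\zeta^TA+DC<0$ with $\zeta,DC\ge0$ forces $A^T\zeta_i<0$ for each positive column $\zeta_i$ of $\zeta$, which already certifies that the Metzler matrix $A$ is Hurwitz stable.

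The main obstacle I anticipate is essentially bookkeeping: distributing the $D^{1/2}$ factors so that the congruence produces exactly $-D$ on the diagonal and $\zeta^TE+DF$ off the diagonal, and checking that the componentwise inequality $\zeta^TA+DC<0$ is preserved in both directions (here it is the positivity of $D^{1/2}$ and of $\zeta$ that keeps the equivalence clean). The one genuinely non-mechanical point is to make sure the characterization borrowed from \citep{Naghnaeian:14} is an exact \emph{if and only if} for the specific scaled positive realization used above, so that no conservatism is introduced in either direction; a quick check on the scalar case $n=q=1$, where the two conditions collapse to $\widehat{\Sigma}(0)<1$, confirms that the reduction is tight.
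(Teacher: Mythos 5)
Your proposal is correct and follows essentially the same route as the paper's proof: apply the Naghnaeian characterization to the scaled realization $D^{1/2}\widehat{\Sigma}(s)D^{-1/2}$, then perform the congruence with $\diag(D^{1/2},D^{1/2})$ and the change of variables $\zeta=\tilde\zeta D^{1/2}$. If anything, your write-up is cleaner than the paper's (which contains typos such as $D^{1/2}E$ in place of $D^{1/2}C$ and $D^{1/2}F^{-1/2}$ in place of $D^{1/2}FD^{-1/2}$), and your observation that $\zeta^TA+DC<0$ already certifies the Hurwitz stability of the Metzler matrix $A$ is a useful detail the paper leaves implicit.
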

\begin{proof}
  To prove this one, we use a result of \cite{Naghnaeian:14} which exactly characterizes the $L_2$-gain of a linear positive system. By applying it to the scaled system $D^{1/2}\widehat{\Sigma}(s) D^{-1/2}$, we get the conditions $\tilde{\zeta}^TA+D^{1/2}E<0$ and
   \begin{equation}
    \begin{bmatrix}
    -I \quad & \tilde\zeta^TED^{-1/2}+D^{1/2}F^{-1/2}\\
    \star & -I
    \end{bmatrix}\prec0
  \end{equation}
  for some $\tilde\zeta\in\mathbb{R}_{>0}^{n\times q}$. A congruence transformation with respect to the matrix $\diag(D^{1/2},D^{1/2})$ and the change of variables $\zeta=\tilde{\zeta}D^{1/2}$ yield the result.
\end{proof}
}
Note that unlike the condition in Theorem \ref{th:rantzer}, the condition in Theorem \ref{th:naghnaeian} is still convex but not linear. Its complexity is also higher than the complexity of the condition in Theorem \ref{th:colombino}.

\subsubsection{$L_\infty$ scaled small-gain theorem}

The following result is the ``$L_\infty$ counterpart" of Theorem \ref{th:colombino} which can also be seen as an extension of the results in \citep{Briat:11h} and a version of the scaled small-gain theorem in the $L_\infty$-sense:
\begin{theorem}\label{th:briat}
Assume that $(A,E,C,F)$ is internally positive. Then, the following statements are equivalent:
\begin{enumerate}[(i)]
  \item The uncertain system \eqref{eq:uncertain} is asymptotically stable for all $\Delta\in\BDeltab{\infty}$.
  \item There exist a positive vector $\lambda\in\mathbb{R}_{>0}^{n}$ and a matrix $D\in\mathcal{D}_{\Delta}$ such that 
            \begin{equation}
            \begin{bmatrix}
              A & E\\
              C & F-I_q
            \end{bmatrix}\begin{bmatrix}
              \lambda\\
              D\mathds{1}_q
            \end{bmatrix}<0.
      \end{equation}
\end{enumerate}
\end{theorem}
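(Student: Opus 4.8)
The plan is to reduce statement (i) to the scaled small-gain inequality and then obtain the linear certificate in (ii) by transposition, exploiting $L_1$/$L_\infty$ duality together with the already-established $L_1$ result (Theorem~\ref{th:ebihara}). Concretely, the general theoretical Proposition of this section, specialized to $p=\infty$, tells us that statement (i) is equivalent to ``$A$ is Hurwitz stable and $\inf_{D\in\mathcal{D}_\Delta}||D\widehat{\Sigma}(0)D^{-1}||_\infty<1$''. Since $A$ is Metzler and Hurwitz we have $-A^{-1}\ge0$ and hence $\widehat{\Sigma}(0)=-CA^{-1}E+F\ge0$, so the whole argument takes place with nonnegative matrices, where the induced $\infty$-norm is simply the largest row sum.

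First I would introduce the transposed realization $(A^\T,C^\T,E^\T,F^\T)$. Because transposition preserves both the Metzler and the nonnegativity structure, this system is again internally positive, $A^\T$ is Hurwitz if and only if $A$ is, and its static gain equals $\widehat{\Sigma}(0)^\T$. Using the elementary norm duality $||M||_\infty=||M^\T||_1$ and the fact that $D\mapsto D^{-1}$ is a bijection of $\mathcal{D}_\Delta$, one gets $\inf_{D}||D\widehat{\Sigma}(0)D^{-1}||_\infty=\inf_{D}||D\widehat{\Sigma}(0)^\T D^{-1}||_1$. Consequently, statement (i) is equivalent to the $L_1$ robust stability of the transposed system, to which Theorem~\ref{th:ebihara} applies verbatim: there exist $\lambda\in\mathbb{R}_{>0}^{n}$ and $D\in\mathcal{D}_\Delta$ with $\begin{bmatrix}\lambda\\ D\mathds{1}_q\end{bmatrix}^{\T}\begin{bmatrix}A^\T & C^\T\\ E^\T & F^\T-I_q\end{bmatrix}<0$. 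Transposing this row-vector inequality immediately yields the column-vector inequality $\begin{bmatrix}A & E\\ C & F-I_q\end{bmatrix}\begin{bmatrix}\lambda\\ D\mathds{1}_q\end{bmatrix}<0$ of statement (ii). This route also recovers ``$A$ Hurwitz'' for free, since the first block row forces $A^\T\lambda<0$.

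Alternatively, one can mirror the $L_1$ proof directly: for nonnegative $\widehat{\Sigma}(0)$ the condition $||D\widehat{\Sigma}(0)D^{-1}||_\infty<1$ is equivalent to the existence of a block-constant vector $v=D\mathds{1}_q>0$ with $\widehat{\Sigma}(0)v<v$, i.e. $-CA^{-1}Ev+(F-I_q)v<0$; setting $\lambda=-A^{-1}Ev\ge0$ gives $A\lambda+Ev=0$ and $C\lambda+(F-I_q)v<0$. The main obstacle in this second route is purely one of strictness and positivity: the natural candidate satisfies only $A\lambda+Ev=0$ (not $<0$) and $\lambda\ge0$ (not $>0$). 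I would repair both at once by perturbing to $\lambda=-A^{-1}(Ev+\varepsilon\mathds{1}_n)$, which makes the first row strictly negative and, using that the nonsingular M-matrix $-A$ has inverse $-A^{-1}\ge0$ with positive diagonal so that $-A^{-1}\mathds{1}_n>0$, makes $\lambda>0$; for $\varepsilon>0$ small enough the second strict inequality is preserved by continuity. The duality route above is preferable precisely because it delegates this strictness bookkeeping to the proof of Theorem~\ref{th:ebihara}.
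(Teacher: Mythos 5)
Your proof is correct, but it reaches statement (ii) by a different route than the paper. The paper's proof is a one-liner of the same flavour as that of Theorem~\ref{th:ebihara}: it directly cites the linear-programming characterization of $||D^{-1}\widehat{\Sigma}(0)D||_\infty<1$ from \citep{Briat:11h} (existence of $\lambda>0$ with $A\lambda+ED\mathds{1}_q<0$ and $D^{-1}C\lambda+D^{-1}FD\mathds{1}_q-\mathds{1}_q<0$) and left-multiplies the second inequality by $D$. Your first route instead derives the $L_\infty$ statement from the already-proved $L_1$ theorem via the transposed realization, the identity $||M||_\infty=||M^{\T}||_1$ and the fact that $D\mapsto D^{-1}$ is a bijection of $\mathcal{D}_\Delta$; this is clean, consistent with the paper's own remark that the $p=1$ and $p=\infty$ cases are dual, and has the advantage of not requiring a second external citation. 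Your second route essentially reproves the cited LP characterization from first principles (max-row-sum characterization of the $\infty$-norm of a nonnegative matrix, $\lambda=-A^{-1}(Ev+\eps\mathds{1}_n)$ with $(-A)^{-1}\ge0$ having no zero rows so that $\lambda>0$, and a continuity argument for strictness), which is exactly the bookkeeping the paper delegates to \citep{Briat:11h}. One cosmetic point: $||D\widehat{\Sigma}(0)D^{-1}||_\infty<1$ literally gives $\widehat{\Sigma}(0)(D^{-1}\mathds{1}_q)<D^{-1}\mathds{1}_q$, so the natural block-constant vector is $D^{-1}\mathds{1}_q$ rather than $D\mathds{1}_q$; since inversion permutes $\mathcal{D}_\Delta$ this is only a relabelling of $D$ (the paper itself silently swaps $D$ and $D^{-1}$ between its two scaled small-gain proofs), but it is worth stating explicitly. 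You should also note, for completeness, that the easy converse direction of your second route (LP implies the gain bound and the Hurwitz property of $A$) follows from applying $(-A)^{-1}\ge0$ to the first block inequality; you only sketch the forward direction.
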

\begin{proof}
 Following \citep{Briat:11h}, we have that $||D^{-1}\widehat{\Sigma} (0)D||_\infty<1$ if and only if there here exists a positive vector $\lambda\in\mathbb{R}^{n}_{>0}$ such that the inequalities
  \begin{equation}
    \begin{array}{rcl}
      A\lambda+ED\mathds{1}_q&<&0\\
      D^{-1}C\lambda+D^{-1}FD\mathds{1}_q-\mathds{1}_{q}&<&0
    \end{array}
  \end{equation}
  hold. Left-multiplying the second inequality by $D$ yields the result.
\end{proof}

\subsection{Stability of interconnections using Integral Linear Constraints}

In the current setup, we are interested in the analysis of interconnections of the form
\begin{equation}\label{eq:G1G2}
  \begin{array}{rcl}
    u_2&=&G_1u_1+d_2\\
    u_1&=&G_2u_2+d_1
  \end{array}
\end{equation}
where $G_1:L_1\mapsto L_1$ and $G_2:L_1\mapsto L_1$ are bounded linear positive time-invariant operators with transfer functions $\widehat{G}_1(s)$ and $\widehat{G}_2(s)$. Note that since the operators are positive, then we have that $\widehat{G}_1(0)\ge0$ and $\widehat{G}_2(0)\ge0$. The signals $u_1,u_2$ are the loop signals and $d_1,d_2$ are the exogenous signals which are all assumed to have dimensions that are compatible with the operators $G_1$ and $G_2$.
%
The next result is a simplified, specialized and extended version of the ones in \cite{Briat:15cdc} \blue{where ILC/separation types results have been formulated. Note that the statement (iv) has also been reported in \cite{Ebihara:11} whereas the statement (v) seems novel.}
\blue{\begin{theorem}\label{th:ILC}
The following statements are equivalent:
\begin{enumerate}[(i)]
  \item The interconnection \eqref{eq:G1G2} is well-posed, positive and stable\footnote{For some additional details about these concepts see \cite{Briat:15cdc}}.
  \item We have that $\rho(\widehat{G}_1(0)\widehat{G}_2(0))<1$.
  \item There exist some vectors $\pi_1\in\mathbb{R}_{>0}^{m}$ and $\pi_2\in\mathbb{R}^{p}$ such that the conditions
   \begin{equation}
   \pi_1^T+\pi_2^T\widehat{G}_2(0)\ge0\ \textnormal{and}\ \pi_1^T\widehat{G}_1(0)+\pi_2^T<0
  \end{equation}
  hold.
  \end{enumerate}
  Moreover, when the internally positive systems $G_1$ and $G_2$ can be represented in terms of the rational transfer functions $\widehat{G}_i(s)=C_i(sI-A_i)^{-1}E_i+F_i$ with $A_i\in\mathbb{R}^{n_i\times n_i}$, $A_i$ Metzler, $C_i\in\mathbb{R}_{\ge0}^{s_i\times n_i}$, $E_i\in\mathbb{R}_{\ge0}^{n_i\times r_i}$ and $F_i\in\mathbb{R}_{\ge0}^{s_i\times r_i}$, $i=1,2$, $s_1=r_2$, $s_2=r_1$, then the above statements are also equivalent to
  \begin{enumerate}[(i)]
  \setcounter{enumi}{3}
  \item There exist some vectors $\lambda_i\in\mathbb{R}^{n_i}$, $\mu_1\in\mathbb{R}_{>0}^{s_1}$ and $\mu_2\in\mathbb{R}_{>0}^{s_2}$ such that the conditions
      \begin{equation}\label{ea:djszkdjsldjlsdjslkjdl}
        \begin{array}{rcl}
          \lambda_1^TA_1+\mu_1^TC_1&<&0\\
          \lambda_1^TE_1+\mu_1^TF_1-\mu_2^T&<&0\\
          \lambda_2^TA_2+\mu_2^TC_2&<&0\\
          \lambda_2^TE_2+\mu_2^TF_2-\mu_1^T&<&0
        \end{array}
      \end{equation}
      hold.
  \item There exist some matrices $P_i\in\mathbb{D}_{\succ0}^{n_i}$ and $Q_i\in\mathbb{S}_{\succ0}^{s_i}$, $i=1,2$ such that the conditions
  \begin{equation}
    \begin{bmatrix}
      A_1^TP_1+P_1A_1 \quad& P_1E_1 \quad& C_1^TQ_1\\
      \star & -Q_2 & F_1^TQ_1\\
      \star  & \star & -Q_1
    \end{bmatrix}\prec0
  \end{equation}
  and
  \begin{equation}\label{ea:djszkdjsldjlsdjslkjdl2}
    \begin{bmatrix}
      A_2^TP_2+P_2A_2 \quad& P_2E_2 \quad& C_2^TQ_2\\
      \star & -Q_1 & F_2^TQ_2\\
      \star  & \star & -Q_2
    \end{bmatrix}\prec0
  \end{equation}
  hold.
\end{enumerate}
\end{theorem}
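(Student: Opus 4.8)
The plan is to treat the triangle of ``abstract'' equivalences (i) $\Leftrightarrow$ (ii) $\Leftrightarrow$ (iii) as the $L_1$ separation/ILC statement inherited from \cite{Briat:15cdc}, and then to derive the two computational reformulations (iv) and (v) from the DC-gain condition. Throughout I write $M_i:=\widehat{G}_i(0)\ge0$. For the core equivalence I would substitute the second loop equation into the first to obtain $(I-G_1G_2)u_2=d_2+G_1d_1$, so that well-posedness, positivity and stability of the interconnection amount to $(I-G_1G_2)^{-1}$ being a bounded positive operator on $L_1$. Since $G_1,G_2$ are positive $L_1$-operators, $G_1G_2$ is again positive with DC gain $\widehat{G_1G_2}(0)=M_1M_2\ge0$, and the Neumann series $\sum_k(G_1G_2)^k$ converges to a bounded positive operator exactly when $\rho(M_1M_2)<1$; this is the separation result of \cite{Briat:15cdc} and gives (i) $\Leftrightarrow$ (ii), with the multiplier $(\pi_1,\pi_2)$ of (iii) being the associated dual certificate.

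For (ii) $\Leftrightarrow$ (iii) I would argue directly by Perron--Frobenius. Writing $\sigma:=-\pi_2$, the two inequalities of (iii) read $\pi_1^T\ge\sigma^TM_2$ and $\sigma^T>\pi_1^TM_1$ with $\pi_1>0$; chaining them yields $\sigma^TM_2M_1<\sigma^T$ with $\sigma>0$, which by the Collatz--Wielandt characterization forces $\rho(M_2M_1)=\rho(M_1M_2)<1$. Conversely, if $\rho(M_1M_2)<1$ then $(I-(M_2M_1)^T)^{-1}\ge0$ exists; choosing $\sigma=(I-(M_2M_1)^T)^{-1}c>0$ for any $c>0$ gives $\sigma^TM_2M_1<\sigma^T$, and setting $\pi_2=-\sigma$, $\pi_1=M_2^T\sigma+\varepsilon\mathds{1}$ with $\varepsilon>0$ small recovers both inequalities of (iii).

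The reduction (iii) $\Leftrightarrow$ (iv) is where the state-space data enter, and the key device is that each $A_i$ is Hurwitz and Metzler, so $-A_i^{-1}\ge0$. Identifying $\mu_1=\pi_1$ and $\mu_2=-\pi_2$, and proceeding as in the proof of Theorem \ref{th:ebihara}, I would show that the first pair of inequalities in \eqref{ea:djszkdjsldjlsdjslkjdl} is feasible in $\lambda_1$ if and only if $\mu_1^TM_1<\mu_2^T$: the choice $\lambda_1^T=-\mu_1^TC_1A_1^{-1}-\delta\mathds{1}^TA_1^{-1}$ realises $\lambda_1^TA_1+\mu_1^TC_1=-\delta\mathds{1}^T<0$ and makes $\lambda_1^TE_1+\mu_1^TF_1\to\mu_1^TM_1$ as $\delta\downarrow0$, while conversely right-multiplying $\lambda_1^TA_1+\mu_1^TC_1<0$ by $-A_1^{-1}\ge0$ gives $\lambda_1^T\ge\mu_1^T(-C_1A_1^{-1})$ and hence $\mu_1^TM_1\le\lambda_1^TE_1+\mu_1^TF_1<\mu_2^T$. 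The symmetric computation on the index-$2$ block yields $\mu_2^TM_2<\mu_1^T$, and these two strict inequalities are exactly (iii), the single non-strict sign being absorbed by an arbitrarily small perturbation.

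Finally, for (ii) $\Leftrightarrow$ (v) I would invoke the positive-system KYP lemma underlying Theorem \ref{th:colombino}. A Schur complement on the $(3,3)$-block shows that the first LMI of (v) is the bounded-real dissipation inequality stating $\|Q_1^{1/2}\widehat{G}_1Q_2^{-1/2}\|_{L_2}<1$; since $G_1$ is positive its worst frequency is $\omega=0$, so feasibility in $P_1\in\mathbb{D}_{\succ0}$, $Q_1,Q_2\in\mathbb{S}_{\succ0}$ is equivalent to $M_1^TQ_1M_1\prec Q_2$, and the second LMI likewise gives $M_2^TQ_2M_2\prec Q_1$. Congruence of the first by $M_2$ followed by the second yields $(M_1M_2)^TQ_1(M_1M_2)\prec Q_1$, a discrete Lyapunov inequality equivalent to $\rho(M_1M_2)<1$; the converse construction of $Q_1,Q_2$ follows from the scaled-small-gain direction of Theorem \ref{th:colombino}, equivalently from Proposition \ref{prop:Stoer}. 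The main obstacle is the operator-level step (i) $\Leftrightarrow$ (ii): establishing that $L_1$ well-posedness, positivity and stability of the loop are all captured by boundedness and positivity of $(I-G_1G_2)^{-1}$, and that this reduces to the DC-gain spectral radius condition; once this is granted, the remaining equivalences are finite-dimensional LP/LMI manipulations resting on the positive-system KYP lemma and Perron--Frobenius theory.
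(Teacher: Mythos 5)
Your proposal follows essentially the same route as the paper: (i)~$\Leftrightarrow$~(ii) is delegated to \cite{Briat:15cdc}, (ii)~$\Leftrightarrow$~(iii) is a Perron--Frobenius/sub-invariant-vector argument (which is exactly the ``standard algebraic manipulation'' the paper invokes, and you correctly note that $-\pi_2>0$ is forced by the second inequality of (iii)), (iii)~$\Leftrightarrow$~(iv) uses the same change of variables $\mu_1=\pi_1$, $\mu_2=-\pi_2$ together with the elimination of $\lambda_i$ via $-A_i^{-1}\ge0$, and the forward direction of (ii)~$\Leftrightarrow$~(v) is the same chaining of $\widehat{G}_1(0)^TQ_1\widehat{G}_1(0)\prec Q_2$ and $\widehat{G}_2(0)^TQ_2\widehat{G}_2(0)\prec Q_1$ into a discrete Lyapunov inequality for $\widehat{G}_1(0)\widehat{G}_2(0)$. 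In several places (the Collatz--Wielandt step, the explicit choice $\lambda_1^T=-\mu_1^TC_1A_1^{-1}-\delta\mathds{1}^TA_1^{-1}$) you supply more detail than the paper does.

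The one step you have not actually proved is the converse implication (ii)~$\Rightarrow$~(v). You assert that the construction of the pair $(Q_1,Q_2)$ ``follows from the scaled-small-gain direction of Theorem \ref{th:colombino}, equivalently from Proposition \ref{prop:Stoer}'', but neither reference does the job directly: Proposition \ref{prop:Stoer} only produces a \emph{diagonal} scaling $D$ with $(M_1M_2)^TD^2(M_1M_2)\prec D^2$, and Theorem \ref{th:colombino} concerns a single LFT system with a block-diagonal uncertainty, whereas what is needed here is to \emph{split} a single Lyapunov certificate for the product $M_1M_2$ into two coupled certificates $M_1^TQ_1M_1\prec Q_2$ and $M_2^TQ_2M_2\prec Q_1$. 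The paper fills this in explicitly: starting from $R\succ0$ with $(M_1M_2)^TR(M_1M_2)\prec R$, it perturbs to $R+S$, sets $Q_1=R$ and $Q_2=M_1^T(R+S)M_1$, obtains one strict and one non-strict inequality, and then restores strictness by a further positive perturbation of $Q_2$. This splitting-and-perturbation argument is short but it is the genuinely non-trivial content of that direction, and your proof should include it (or an equivalent construction) rather than appeal to results that only certify the product.
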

\begin{proof}
The proof that (i) and (ii) are equivalent follows from \cite{Briat:15cdc}. The equivalence between (ii) and (iii) follows from standard algebraic manipulations and the fact that $\rho(\widehat{G}_1(0)\widehat{G}_2(0))<1$ if and only if there exists a positive vector $\lambda$ of compatible dimensions such that $\lambda^T(\widehat{G}_1(0)\widehat{G}_2(0)-I)<0$. The equivalence between (iii) and (iv) follows from standard algebraic manipulations together with the change of variables $\mu_1=\pi_1$ and $\mu_2=-\pi_2$. This statement has also been proven in \cite{Ebihara:11}. To prove the equivalence between (ii) and (v), first note that from \cite{Rantzer:16}, the LMIs are equivalent to saying that
\begin{equation}
  \begin{bmatrix}
    \widehat{G}_1(0)\\
    I
  \end{bmatrix}^T\begin{bmatrix}
    Q_1 \quad & 0\\
    0 & -Q_2
  \end{bmatrix} \begin{bmatrix}
    \widehat{G}_1(0)\\
    I
  \end{bmatrix}=\widehat{G}_1(0)^TQ_1\widehat{G}_1(0)-Q_2\prec0
\end{equation}
and
\begin{equation}
  \begin{bmatrix}
    \widehat{G}_2(0)\\
    I
  \end{bmatrix}^T\begin{bmatrix}
    Q_2 \quad & 0\\
    0 & -Q_1
  \end{bmatrix} \begin{bmatrix}
    \widehat{G}_2(0)\\
    I
  \end{bmatrix}=\widehat{G}_2(0)^TQ_2\widehat{G}_2(0)-Q_1\prec0.
\end{equation}
These two inequalities together imply that $\widehat{G}_2(0)^T\widehat{G}_1(0)^TQ_1\widehat{G}_1(0)\widehat{G}_2(0)-Q_1\prec0$ or $\widehat{G}_1(0)^T\widehat{G}_2(0)^TQ_2\widehat{G}_2(0)\widehat{G}_1(0)-Q_2\prec0$, which are equivalent to saying that (ii) holds. The converse can be proven by first noting that (ii) is equivalent to saying that there exists a matrix $R\in\mathbb{R}_{\succ0}^{s_1}$ such that  $\widehat{G}_2(0)^T\widehat{G}_1(0)^TR\widehat{G}_1(0)\widehat{G}_2(0)-R\prec0$. This implies the existence of a sufficiently small $S\in\mathbb{S}_{\succ0}^{s_1}$ such that $\widehat{G}_2(0)^T\widehat{G}_1(0)^T(R+S)\widehat{G}_1(0)\widehat{G}_2(0)-R\prec0$. Letting now $Q_2=\widehat{G}_1(0)^T(R+S)\widehat{G}_1(0)$ and $R=Q_1$, we get that $\widehat{G}_2(0)^TQ_2\widehat{G}_2(0)-Q_1\prec0$ and $\widehat{G}_1(0)^TQ_1\widehat{G}_1(0)-Q_2=-\widehat{G}_1(0)^TS\widehat{G}_1(0)\preceq0$. In order to prove that opening the inequality sign is not restrictive, it is enough to note that if $\widehat{G}_2(0)^TQ_2\widehat{G}_2(0)-Q_1\prec0$ and $\widehat{G}_1(0)^TQ_1\widehat{G}_1(0)-Q_2\preceq0$ then we can always positively perturb the value $Q_2$ so that we have  $\widehat{G}_2(0)^TQ_2\widehat{G}_2(0)-Q_1\prec0$ and $\widehat{G}_1(0)^TQ_1\widehat{G}_1(0)-Q_2\prec0$. The proof is completed.
\end{proof}
Note that the difference with the results in \cite{Briat:11g,Briat:11g,Ebihara:11,Colombino:15} is that the above result may deal with more general systems than LTI systems with state-space realization but can deal with any type of linear positive bounded operators such as bounded integral operators. This will be useful for dealing with systems with distributed delays. It is also interesting to mention that the above result is not a small-gain result but a separation result \cite{Safonov:78,Iwasaki:98a,Ebihara:11,Briat:15cdc}. Note, finally, that even though the matrices $Q_1$ and $Q_2$ are defined to be general symmetric matrices, one can be chosen to be diagonal without losing the necessity of the result. However, it is unclear whether they can be both chosen as diagonal without losing necessity.

\begin{remark}\label{remark:dszkdls}
  It is interesting to see that the two last inequalities in \eqref{ea:djszkdjsldjlsdjslkjdl} can be substituted by $\mu_2^TG_2(0)-\mu_1^T<0$ while the LMI \eqref{ea:djszkdjsldjlsdjslkjdl2} can be substituted by $G_2(0)^TQ_2G_2(0)-Q_1\prec0$. This will be useful when analyzing systems with distributed delays in Section \ref{sec:distributed_main}.
\end{remark}
}

\section{Stability and performance of linear positive systems with discrete-delays}\label{sec:discrete_main}


\subsection{Stability analysis -- the constant delay case}\label{sec:discreteTI}

We start with the following result \citep{GuKC:03,Briat:book1}:
\begin{proposition}
Let $p\in\{1,2,\infty\}$. Then, the time-varying delay operator
\begin{equation}
  \begin{array}{rclcl}
  \mathscr{D}_h&:&L_p&\mapsto &L_p,\\
  &&w(t)&\mapsto&w(t-h),\ h\ge 0
  \end{array}
\end{equation}
has unitary $L_1$-, $L_2$- and $L_\infty$-gains.%
\end{proposition}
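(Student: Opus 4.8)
The plan is to compute the gain for each $p\in\{1,2,\infty\}$ directly from the definition, splitting the argument into the easy case $p=\infty$ and the two cases $p\in\{1,2\}$ that rely on a time change. Throughout I keep $h=h(t)$ genuinely time-varying and nonnegative, and I track exactly where its regularity is needed. In every case ``unitary gain'' means establishing both an upper bound $||\mathscr{D}_h||_{L_p-L_p}\le1$ and a matching lower bound, the latter by exhibiting near-extremal inputs. I would also adopt the zero-initial-condition convention, extending $w$ by $0$ on the negatives so that $w(t-h(t))$ is defined whenever $t-h(t)<0$.

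For $p=\infty$ the upper bound is immediate and needs no regularity of $h$: since $\mathscr{D}_h w$ evaluates $w$ only at the points $t-h(t)$, which range over a subset of $\mathbb{R}_{\ge0}$, one has $\esssup_{t\ge0}||w(t-h(t))||_\infty\le\esssup_{s\ge0}||w(s)||_\infty$, so the gain is at most $1$. The reverse inequality follows by testing against a bounded constant input $w\equiv e$ with $||e||_\infty=1$: then $w(t-h(t))=e$ for all $t$ large enough that $t-h(t)\ge0$, forcing the output $L_\infty$-norm to equal $1$. Hence the $L_\infty$-gain is exactly $1$ unconditionally.

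For $p\in\{1,2\}$ I would start from $||\mathscr{D}_h w||_{L_p}^p=\int_0^\infty||w(t-h(t))||_p^p\,dt$ and perform the substitution $\tau=t-h(t)$, whose Jacobian is $d\tau=(1-\dot h(t))\,dt$. Assuming $\dot h(t)<1$ so that $t\mapsto t-h(t)$ is strictly increasing and invertible, and discarding the (zero) contribution on the negative axis, this rewrites the integral as $\int_0^\infty||w(\tau)||_p^p\,\frac{d\tau}{1-\dot h(t(\tau))}$. When the delay is constant the Jacobian is identically $1$, the map is a measure-preserving shift, and one obtains an exact isometry, hence gain exactly $1$; this is the clean mechanism behind the statement.

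The main obstacle is precisely the Jacobian factor $1-\dot h(t)$: for a genuinely time-varying delay it is not identically $1$, so the $L_1$- and $L_2$-computations yield $\int_0^\infty||w(\tau)||_p^p(1-\dot h)^{-1}\,d\tau$ rather than $||w||_{L_p}^p$. The upper bound $\le 1$ is therefore recovered exactly when $(1-\dot h(t))^{-1}\le1$, i.e. under the monotonicity condition $\dot h(t)\le0$, with the constant-delay case $\dot h\equiv0$ giving pointwise equality; the matching lower bound then follows by concentrating the mass of $w$ on the region where $1-\dot h$ is closest to $1$. Making this rigorous — justifying invertibility of the time change via $\dot h<1$, handling the endpoint/zero-extension contribution, and aligning the achievability argument with the Jacobian-induced upper bound — is the delicate part, whereas the $p=\infty$ case goes through with no constraint on $\dot h$.
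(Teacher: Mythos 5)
Your proposal contains the right two-line mechanism, but it is buried inside a framing under which the proposition cannot be proved at all. The issue is interpretive: despite the word ``time-varying'' in the proposition text (a slip in the paper's wording), $\mathscr{D}_h$ is the \emph{constant}-delay shift --- note that $h\ge0$ carries no time argument, the proposition sits in Section~\ref{sec:discreteTI} (``the constant delay case''), and the genuinely time-varying operator is introduced separately as $\mathscr{T}_h$ in Section~\ref{sec:discreteTV}, where the paper itself states that its $L_p$-gain is $(1-\eta)^{-1/p}\neq 1$ for finite $p$. Under your reading ($h=h(t)$ arbitrary nonnegative), the claim of unit $L_1$- and $L_2$-gains is simply false, and your own computation exhibits the obstruction: the Jacobian factor $(1-\dot h)^{-1}$ does not disappear, and you end up recovering the bound only under the extra hypothesis $\dot h\le 0$ (or $\dot h\equiv 0$). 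A proof attempt that concludes ``the upper bound holds exactly when $\dot h\le 0$'' has not established the proposition; it has correctly refuted your reading of it, and the unresolved discrepancy should have prompted you to revisit the reading rather than to present conditional bounds as the delicate remaining work.

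Once $h$ is recognized as constant, the argument you mention in passing is already the complete, standard proof (the paper offers no proof of its own, citing \citep{GuKC:03,Briat:book1}): with the zero-extension convention, for any finite $p$,
\begin{equation*}
  \int_0^\infty \|w(t-h)\|_p^p\,dt \;=\; \int_{-h}^\infty \|w(\tau)\|_p^p\,d\tau \;=\; \int_{0}^\infty \|w(\tau)\|_p^p\,d\tau,
\end{equation*}
so $\mathscr{D}_h$ is an exact isometry on $L_p$, and the $L_\infty$ case is the sup bound you gave. In particular, no near-extremal construction is needed for the lower bound: every nonzero input attains ratio $1$. (Your constant-input test for the lower bound works only for $p=\infty$, since constants are not in $L_p$ for finite $p$ --- another sign that the isometry route, not a testing argument, is the right closure.) Your observation that the $p=\infty$ bound survives arbitrary time variation of the delay is correct and consistent with the paper's subsequent proposition on $\mathscr{T}_h$, but it belongs to Section~\ref{sec:discreteTV}, not here.
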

Note that by virtue of the Riesz-Thorin interpolation theorem \cite{Hormander:85}, we immediately get that all the $L_p$-gains, for $p=1,2,\ldots,\infty$, of the above delay operator are equal to 1.

\begin{proposition}\label{prop:TDS1}
The linear time-delay system
\begin{equation}\label{eq:TDS1}
  \dot{x}(t)=A_0x(t)+\sum_{i=1}^NA_ix(t-h_i)
  \end{equation}
  coincides with the uncertain system \eqref{eq:uncertain} where $A=A_0$, $E=[A_1\ \ldots\ A_N]$, $C=\mathds{1}_N\otimes I_n$, $F=0$ and
  \begin{equation*}
    \Delta\in\boldsymbol{\Delta_d}:=\left\{\diag_{i=1}^N(e^{-sh_i}I_n):\ h\ge0,\ \Re(s)\ge0\right\}.
  \end{equation*}
  %
\end{proposition}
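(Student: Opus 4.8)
The claim is essentially a bookkeeping identity: rewriting the delay-differential equation \eqref{eq:TDS1} as a feedback interconnection of a finite-dimensional LTI system with a block-diagonal ``delay uncertainty''. The plan is to take the Laplace transform of both sides (assuming zero initial data, since we only need the input-output/operator equivalence that defines \eqref{eq:uncertain}) and then match terms against the LFT structure of \eqref{eq:uncertain}.

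\begin{proof}
Taking Laplace transforms of \eqref{eq:TDS1} with zero initial condition and using that the delay operator $\mathscr{D}_{h_i}:w(t)\mapsto w(t-h_i)$ has transfer function $e^{-sh_i}I_n$, we obtain
\begin{equation*}
  s\,\widehat{x}(s)=A_0\widehat{x}(s)+\sum_{i=1}^N A_i\,e^{-sh_i}\widehat{x}(s).
\end{equation*}
First I would introduce the fictitious signals $z(t)=Cx(t)=\col_{i=1}^N\{x(t)\}$, i.e.\ $N$ copies of the state stacked via $C=\mathds{1}_N\otimes I_n$, and then define $w(t)=\Delta z(t)$ by applying the delay $e^{-sh_i}I_n$ to the $i$-th copy, so that $w=\col_{i=1}^N\{x(t-h_i)\}$. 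With $E=[A_1\ \cdots\ A_N]$ and $F=0$ one checks $Ew(t)=\sum_{i=1}^N A_i x(t-h_i)$, which is exactly the delayed term in \eqref{eq:TDS1}, and the output equation $z=Cx+Fw$ reduces to $z=Cx$ as intended. Substituting $w=\Delta z$ with $\Delta=\diag_{i=1}^N(e^{-sh_i}I_n)$ closes the loop and reproduces the transformed equation above, establishing the coincidence.

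The remaining step is to verify that $\Delta$ indeed ranges over the stated set $\boldsymbol{\Delta_d}$ and is an admissible uncertainty in the sense of \eqref{eq:Bdelta}: each block $e^{-sh_i}I_n$ is a repeated-scalar block of size $q_i=n$, and for $\Re(s)\ge0$ one has $|e^{-sh_i}|\le1$, so each block has induced $p$-norm at most $1$ and $\Delta\in\BDeltab{p}$ for every $p\in\{1,2,\infty\}$. Here the matching of dimensions $q=Nn$ with $C=\mathds{1}_N\otimes I_n$ and $E=[A_1\ \cdots\ A_N]$ should be stated explicitly, and one should note that internal positivity of the quadruple $(A_0,E,C,F)$ holds precisely when $A_0$ is Metzler and $A_1,\dots,A_N\ge0$, so that the general machinery of Section \ref{sec:general} applies.

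I do not expect a genuine obstacle here: the statement is a definitional reformulation rather than a theorem with content, so the ``hard part'' is only rhetorical care. The one subtlety worth flagging is the role of $s$: the elements of $\boldsymbol{\Delta_d}$ are frequency-dependent multipliers (not constant matrices), so the equivalence is at the level of the transfer-function/operator interconnection, and one must be careful to interpret $\Delta\in\BDeltab{p}$ through the operator norm of the delay rather than a fixed complex matrix. Making that interpretation precise—i.e.\ that the delay operator attains unit gain uniformly in $h\ge0$, as guaranteed by the preceding proposition and the Riesz--Thorin remark—is the only point requiring a word of justification.
\end{proof}
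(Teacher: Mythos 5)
Your proposal is correct and follows essentially the same route as the paper, which simply states that the result ``follows from direct substitutions''; your version merely spells out the substitution $z=Cx$, $w=\Delta z$, $Ew=\sum_{i=1}^N A_i x(t-h_i)$ explicitly and adds the (accurate) remark that each delay block has unit induced norm so that $\boldsymbol{\Delta_d}\subset\BDeltab{p}$. No gap to report.
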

\begin{proof}
  The proof follows from direct substitutions.
\end{proof}
It is known that the system \eqref{eq:TDS1} is positive if and only if $A_0$ is Metzler and $A_i$ is nonnegative for all $i=1,\ldots,N$; see e.g. \cite{Haddad:04}. We can now state the main result that unifies the results in  \citep{Haddad:04,Briat:11h,Aleksandrov:16}:
\begin{theorem}\label{th:TDS1}
 Assume that the system \eqref{eq:TDS1} is positive. Then, the following statements are equivalent:
  \begin{enumerate}[(i)]
    \item The system \eqref{eq:TDS1} is asymptotically stable.
    \item The matrix $\textstyle\sum_{i=0}^NA_i$ is Hurwitz stable.
    \item There exists a vector $v\in\mathbb{R}^n_{>0}$ such that $v^T\left(\textstyle\sum_{i=0}^NA_i\right)<0$.
    \item There exist matrices $P,Q_i\in\mathbb{D}^n_{\succ0}$, $i=1,\ldots,N$, such that the Riccati inequality
    \begin{equation}\label{eq:kdsodlsmdkkdsmd}
      A_0^TP+PA_0+\sum_{i=1}^N\left(Q_i+PA_iQ_i^{-1}A_i^TP\right)\prec0
    \end{equation}
    holds.
    \item $A_0$ is Hurwitz stable and $\textstyle-(\sum_{i=1}^NA_i)A_0^{-1}$ is Schur stable.
  \end{enumerate}
\end{theorem}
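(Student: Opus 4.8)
The plan is to exploit the reformulation of \eqref{eq:TDS1} as the uncertain feedback system \eqref{eq:uncertain} provided by Proposition~\ref{prop:TDS1}, in which the uncertainty blocks $e^{-sh_i}I_n$ are \emph{repeated scalar} blocks of unit $L_p$-gain. Since $(A_0,E,C,F)$ with $E=[A_1\ \ldots\ A_N]$, $C=\mathds{1}_N\otimes I_n$ and $F=0$ is internally positive, I would invoke the first Proposition of Section~\ref{sec:general} (scaled small-gain with repeated scalar uncertainties): asymptotic stability for all admissible delays is equivalent to $A_0$ being Hurwitz together with $\rho(\widehat{\Sigma}(0))<1$. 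The first task is therefore to evaluate $\widehat{\Sigma}(0)=-CA_0^{-1}E$ and compute its spectral radius. Its $(i,j)$ block equals $-A_0^{-1}A_j$, so every block-row is identical; consequently any eigenvector associated with a nonzero eigenvalue must have equal blocks, and the nonzero spectrum of $\widehat{\Sigma}(0)$ coincides with that of $-A_0^{-1}\sum_{i=1}^N A_i$. Using that $XY$ and $YX$ share their nonzero eigenvalues gives $\rho(\widehat{\Sigma}(0))=\rho\big(-(\sum_{i=1}^N A_i)A_0^{-1}\big)$, so $\rho(\widehat{\Sigma}(0))<1$ is exactly the Schur stability of $-(\sum_{i=1}^N A_i)A_0^{-1}$, establishing $\text{(i)}\Leftrightarrow\text{(v)}$. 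I would also record that $\widehat{\Sigma}(0)\ge0$, since $-A_0^{-1}\ge0$ for a Hurwitz Metzler $A_0$ and $C,E\ge0$; this nonnegativity is used below.

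For the purely algebraic conditions I would set $B:=\sum_{i=1}^N A_i\ge0$, so that $\sum_{i=0}^N A_i=A_0+B$ is Metzler. The equivalence $\text{(ii)}\Leftrightarrow\text{(iii)}$ is the standard characterization that a Metzler matrix is Hurwitz iff it admits a positive vector $v$ with $v^T(A_0+B)<0$, which is precisely (iii). For $\text{(v)}\Leftrightarrow\text{(ii)}$ I would argue by M-matrix/Metzler theory: if $A_0$ is Hurwitz and $\rho(-BA_0^{-1})<1$, then $A_0+B=A_0(I+A_0^{-1}B)$ is invertible and $-(A_0+B)^{-1}=(I-(-A_0^{-1}B))^{-1}(-A_0^{-1})\ge0$ by a Neumann series, so $A_0+B$ is Hurwitz; conversely, monotonicity of the spectral abscissa for Metzler matrices shows that $A_0$ (which satisfies $A_0\le A_0+B$ entrywise) is Hurwitz, and a Collatz--Wielandt/Perron--Frobenius argument applied to a positive certificate $(A_0+B)w<0$ yields $\rho(-A_0^{-1}B)\le1$, strict after a standard irreducibility/perturbation step.

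Finally, for the Riccati characterization (iv) I would apply the $L_2$ scaled-small-gain LMI of Theorem~\ref{th:colombino}, but with the block-diagonal scaling $D=\diag(Q_1,\ldots,Q_N)$, $Q_i\in\mathbb{D}^n_{\succ0}$. Plugging $E,C,F$ into the LMI and eliminating its two lower block-rows by a Schur complement (the block to be inverted is $\diag(-Q_1,\ldots,-Q_N)$ repeated twice) reproduces exactly \eqref{eq:kdsodlsmdkkdsmd}. I expect the main obstacle to be justifying that these \emph{full} diagonal blocks $Q_i$, rather than the scalar scalings $d_iI_n$ of $\mathcal{D}_\Delta$, may be used while retaining both sufficiency and necessity. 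The resolution hinges on the repeated-scalar structure of $\boldsymbol{\Delta_d}$: each block $e^{-sh_i}I_n$ is a scalar multiple of the identity, so it commutes with any $Q_i$, which makes the enlarged scaling class admissible (sufficiency); and it is nonconservative because $\widehat{\Sigma}(0)\ge0$, so Proposition~\ref{prop:Stoer} (Stoer--Witzgall) gives $\inf_{D}\|D\widehat{\Sigma}(0)D^{-1}\|_2=\rho(\widehat{\Sigma}(0))$ over all positive diagonal $D$, whence $\rho(\widehat{\Sigma}(0))<1$ forces feasibility of the LMI (necessity). Combining the four blocks then closes all equivalences.
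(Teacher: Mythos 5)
Your proposal is correct, and it lives inside the same input--output framework as the paper (reformulate \eqref{eq:TDS1} as the LFT \eqref{eq:uncertain} via Proposition~\ref{prop:TDS1} and invoke the scaled small-gain results of Section~\ref{sec:general}), but it threads the equivalences in a different order and with different primitives. The paper proves (i)$\Leftrightarrow$(ii) in one line by applying the $L_\infty$ linear-programming condition of Theorem~\ref{th:briat}: the LP asks for $\lambda,\nu>0$ with $A_0\lambda+E\nu<0$ and $\mathds{1}_N\otimes\lambda<\nu$, and eliminating $\nu$ (monotonicity in $\nu$) collapses this to $(\sum_{i=0}^NA_i)\lambda<0$; statement (v) is then read off from Proposition~\ref{prop:Stoer}. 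You instead start from the spectral-radius form of the repeated-scalar small-gain proposition to get (i)$\Leftrightarrow$(v), computing $\rho(\widehat{\Sigma}(0))=\rho\bigl(-(\sum_{i=1}^NA_i)A_0^{-1}\bigr)$ via the $\rho(XY)=\rho(YX)$ trick, and then close the loop (v)$\Leftrightarrow$(ii) by Metzler/M-matrix theory (Neumann series one way, a Collatz--Wielandt bound the other --- note that the strict inequality $\rho(-A_0^{-1}B)<1$ already follows from $(-A_0^{-1}B)w<w$ with $w>0$, so no irreducibility or perturbation step is actually needed). What your route buys is self-containedness on the linear-algebra side; what the paper's route buys is brevity, since the LP elimination bypasses any spectral computation. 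For (iv) both proofs apply Theorem~\ref{th:colombino} with $D=\diag_{i=1}^N(Q_i)$ and a Schur complement; your explicit justification that the enlarged diagonal scaling class is admissible (commutation with $e^{-sh_i}I_n$ for sufficiency, Proposition~\ref{prop:Stoer} for non-conservatism) is more careful than the paper's one-sentence remark on this point and is a welcome addition.
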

\begin{proof}
The equivalence between the two first statements follows from the application of Theorem \ref{th:briat} on the system \eqref{eq:TDS1}. Indeed, by applying the linear programming conditions of Theorem  \ref{th:briat} we get that the system is asymptotically stable if and only there exist vectors $\lambda\in\mathbb{R}_{>0}^{n}$ and $\nu\in\mathbb{R}_{>0}^{Nn}$ such that $A_0\lambda+E\nu<0$ and $\mathds{1}_N\otimes\lambda-\nu<0$. This is equivalent to say that we have $A_0\lambda+E(\mathds{1}_N\otimes\lambda)=(\textstyle\sum_{i=0}^NA_i)\lambda<0$, which is equivalent to the first statement of the result. The equivalence between the statements (ii) and (iii) follows from standard results on the stability of Metzler matrices whereas the equivalence between (i) and (iv) follows from Theorem \ref{th:colombino} where we have set $D=\textstyle\diag_{i=1}^N(Q_i)$ where $Q_i\in\mathbb{D}_{\succ0}^{q_i}$. Note that in this case, the uncertainty structure is diagonal and hence $\mathcal{D}_\Delta$ is the set of diagonal matrices with positive diagonal entries. Finally, statement (v) is obtained from Proposition \ref{prop:Stoer}.
\end{proof}

\blue{It is interesting to provide few remarks regarding the above result. First of all, we recover the property that the stability of a linear positive system with constant and discrete-delays does not depend on the delay values and hence stability is a delay-independent property.} Secondly, the second statement of the above result provides an answer to a particular version of the problem stated in \cite[Problem 1.11]{Blondel:04bk} by E. Verriest on the Riccati stability of linear time-delay systems with a single discrete and constant delay. This problem is about finding conditions on the matrices $A_0$ and $A_1$ (i.e. in the case $N=1$) for which there exist $P,Q_1\in\mathbb{S}^n_{\succ0}$ such that \eqref{eq:kdsodlsmdkkdsmd} holds. The above result  provides a solution to this problem for the particular cases of linear positive and positively dominated systems with delays. Note that the positive systems case has also been solved in \citep{Aleksandrov:16,Mason:12} using different approaches. These results have since been extended to some other classes of systems in \cite{Aleksandrov:16b}.

The advantages of the proposed approach over the previously mentioned ones are its simplicity and its flexibility. Indeed, while the proofs of these results in the above works involve some very technical developments, the proposed approach allows to retrieve the same results through a very simple application of the scaled small-gain theorems. Moreover, the approach can be easily extended to other types of uncertainties, to performance analysis and to design purposes. Finally, the statement (iv) in the above result is also interestingly as it generalizes the frequency-sweeping results of \cite[Section 2.3]{GuKC:03} where this condition is stated as necessary for a linear system with delay to be stable. For linear positive systems, this condition is also sufficient and this result can be interpreted as a consequence of the fact that the maximum value of the spectral radius is always attained at the zero frequency.

\blue{\subsection{Performance analysis -- the constant delay case}

Let us now consider the system
\begin{equation}\label{eq:TDS1_input}
\begin{array}{rcl}
  \dot{x}(t)&=&A_0x(t)+\sum_{i=1}^NA_ix(t-h_i)+E_uu(t)\\
  y(t)&=&C_0x(t)+\sum_{i=1}^NC_ix(t-h_i)+F_uu(t)\\
  \end{array}
  \end{equation}
where $u\in\mathbb{R}^{n_u}$ and $y\in\mathbb{R}^{n_y}$ are the input and the output, respectively. It is known that the above system is positive if and only if $A_0$ is Metzler and $C,E_u,F_u,A_i,C_i$ are nonnegative for all $i=1,\ldots,N$. We then have the following result:
\begin{theorem}\label{prop:perf_CDD}
  Assume that the system \eqref{eq:TDS1_input} is positive. Then, the following statements hold:
  \begin{enumerate}[(i)]
    \item The system \eqref{eq:TDS1_input} is asymptotically stable and has an $L_1$-gain smaller than $\gamma_1$ if and only if there exists a vector $\lambda\in\mathbb{R}^n_{>0}$ such that
    \begin{equation}
        \begin{bmatrix}
          \lambda\\
          \mathds{1}_{n_y}
          \end{bmatrix}^T\begin{bmatrix}
            \textstyle \sum_{i=0}^NA_i &\quad& E_u\\
           \textstyle  \sum_{i=0}^NC_i &\quad& F_u
          \end{bmatrix}<\begin{bmatrix}
            0\\
            \gamma_1\mathds{1}_{n_u}
          \end{bmatrix}^T.
    \end{equation}
      \item The system \eqref{eq:TDS1_input} is asymptotically stable and has an $L_2$-gain smaller than $\gamma_2$ if and only if there exists a matrix $P,Q_i\in\mathbb{D}^n_{\succ0}$, $i=1,\ldots,N$, such that
          \begin{equation}
        \begin{bmatrix}
          A_0^TP+PA_0+\sum_{i=1}^NQ_i &\quad \row_{i=1}^N(PA_i) &\quad PE_u & C_0^T\\
          \star & -\diag_{i=1}^N(Q_i) & 0 &  \col_{i=1}^N(C_i^T)\\
          \star  &\star& -\gamma_2 I & F_u^T\\
                    \star  &\star& \star & -\gamma_2 I
        \end{bmatrix}\prec0.
    \end{equation}
    \item The system \eqref{eq:TDS1_input} is  asymptotically stable and has an $L_\infty$-gain smaller than $\gamma_\infty$ if and only if there exists a vector $\lambda\in\mathbb{R}^n_{>0}$ such that
    \begin{equation}
    \begin{bmatrix}
             \textstyle \sum_{i=0}^NA_i &\quad& E_u\\
           \textstyle  \sum_{i=0}^NC_i &\quad& F_u
          \end{bmatrix}\begin{bmatrix}
          \lambda\\
          \mathds{1}_{n_u}
          \end{bmatrix}<\begin{bmatrix}
            0\\
            \gamma_\infty\mathds{1}_{n_y}
          \end{bmatrix}.
    \end{equation}
  \end{enumerate}
\end{theorem}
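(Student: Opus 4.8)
The plan is to treat \eqref{eq:TDS1_input} exactly as in the proof of Theorem~\ref{th:TDS1}, namely to recast it as the uncertain system \eqref{eq:uncertain} in which the constant delays enter through the block-diagonal operator $\Delta=\diag_{i=1}^N(e^{-sh_i}I_n)\in\boldsymbol{\Delta_d}$, but now carrying along the exogenous input $u$ and the performance output $y$. Following Proposition~\ref{prop:TDS1}, the delayed-state vector plays the role of $w$, with $E=\row_{i=1}^N(A_i)$, $C=\mathds{1}_N\otimes I_n$ and $F=0$, while the output equation contributes $C_0$ on the state, $\row_{i=1}^N(C_i)$ on $w$ and $F_u$ on $u$. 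Since each delay operator has unitary $L_p$-gain, certifying asymptotic stability together with an $L_p$-gain bound $\gamma_p$ from $u$ to $y$ is a robust performance problem over $\boldsymbol{\Delta_d}$, and the non-conservativeness of the scaled small-gain theorem for positive systems (the central theme of the paper) allows it to be handled with the same constant $D$-scalings $D=\diag_{i=1}^N(Q_i)$, or their linear-programming counterparts, used in Theorem~\ref{th:TDS1}.

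For each $p\in\{1,2,\infty\}$ I would then invoke the performance analogue of the corresponding scaled small-gain theorem. Concretely, the $L_2$ statement (ii) follows from the KYP-based condition of Theorem~\ref{th:colombino} applied to the augmented plant: Schur-complementing the delay channel (whose scaling is $\diag_{i=1}^N(Q_i)$) produces the upper-left $2\times2$ block, with the $(1,1)$ entry $A_0^TP+PA_0+\sum_{i=1}^N Q_i$ being $A_0^TP+PA_0+C^TDC$ for $C=\mathds{1}_N\otimes I_n$, the diagonal $P\in\mathbb{D}^n_{\succ0}$ acting as the positive-system storage function; appending the performance ports with the scaling $\gamma_2 I$ then yields precisely the displayed $4\times4$ LMI, the off-diagonal blocks $PE_u$, $C_0^T$, $\col_{i=1}^N(C_i^T)$ and $F_u^T$ being read off directly. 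The $L_1$ and $L_\infty$ statements (i) and (iii) follow in the same way from the linear-programming conditions of Theorems~\ref{th:ebihara} and~\ref{th:briat}: introducing a Perron-type vector $\lambda\in\mathbb{R}^n_{>0}$ together with the slack variables for the delay and performance channels, and eliminating the latter, collapses the conditions to the two displayed block inequalities involving $\sum_{i=0}^NA_i$ and $\sum_{i=0}^NC_i$.

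The step that makes everything close is the collapse of the delay channel to its zero-delay counterpart, exactly as in Theorem~\ref{th:TDS1}: because $e^{-sh_i}$ equals $1$ at $s=0$, the scaling and slack variables can be chosen, using the nonnegativity of $A_i$ and $C_i$, so that the delay contributions sum up, turning the conditions into the $L_p$-gain characterizations of the finite-dimensional positive system $(\sum_{i=0}^NA_i,E_u,\sum_{i=0}^NC_i,F_u)$. This is consistent with the a priori observation that, for a positive system, $\|\Sigma\|_{L_p-L_p}=\|\widehat{\Sigma}(0)\|_p$ and that the $u\mapsto y$ transfer function of \eqref{eq:TDS1_input} at $s=0$, namely $-(\sum_{i=0}^NC_i)(\sum_{i=0}^NA_i)^{-1}E_u+F_u$, is independent of the delays; hence the gain itself is delay-independent and equals the zero-delay gain.

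The hard part will be justifying, rather than merely asserting, this last reduction at the level of the matrix inequalities. Two ingredients are needed: first, that the $L_p$-gain of the externally positive infinite-dimensional convolution operator defined by \eqref{eq:TDS1_input} is attained at zero frequency, which rests on the extension of $\|\Sigma\|_{L_p-L_p}=\|\widehat{\Sigma}(0)\|_p$ to externally positive operators noted after \eqref{eq:convolution}, together with the unitary DC value of the delay operators; and second, for $p=2$, that eliminating the delay block in the $4\times4$ LMI is equivalent to the zero-delay bounded-real LMI for $(\sum_{i=0}^NA_i,E_u,\sum_{i=0}^NC_i,F_u)$. The latter is the only genuinely computational point, and it is routine once one recognizes the $Q_i$ as $D$-scalings and performs the Schur-complement bookkeeping; the $L_1$ and $L_\infty$ reductions are the direct analogues of the argument already carried out in the proof of Theorem~\ref{th:TDS1}.
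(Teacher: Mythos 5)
Your plan correctly identifies the LFT reformulation of \eqref{eq:TDS1_input} and the relevant scaled small-gain theorems, but it hinges on invoking ``the performance analogue of the corresponding scaled small-gain theorem,'' which is never established: Theorems \ref{th:ebihara}, \ref{th:colombino} and \ref{th:briat} are robust \emph{stability} results only. The missing idea, and the one the paper actually uses, is to convert the gain bound itself into a robust stability question: close the performance channel with a fictitious nonnegative \emph{full-block} uncertainty $u=My$ with $\|M\|_p\le\gamma_p^{-1}$, so that, by the non-conservativeness of the scaled small gain for positive systems, the interconnection with the extended uncertainty $\Delta_e=\diag(\Delta,M)$ is robustly stable if and only if the system is stable and the $u\mapsto y$ gain is smaller than $\gamma_p$. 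One then applies the existing stability theorems verbatim with the extended scaling $D_e=\diag(D,\epsilon I_{n_u})$ --- the scalar scaling $\epsilon I$ being forced by the full-block structure of $M$ --- and the factors $\gamma_1\mathds{1}_{n_u}$, $\gamma_2 I$ and $\gamma_\infty\mathds{1}_{n_y}$ in the displayed conditions come precisely from normalizing by $\epsilon$. Without this device you have no source for the ``only if'' direction: your fallback --- that the gain of the infinite-dimensional convolution operator is attained at $s=0$ and that the $4\times4$ LMI Schur-complements to the zero-delay bounded-real LMI --- is exactly the part you yourself defer as ``the hard part,'' so the argument does not close as written.

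A secondary point you omit: the $D$-scaling machinery requires the uncertainty blocks to be square, which fails when $n_u\ne n_y$; the paper handles this by augmenting $u$ or $y$ and padding the associated matrices with zeros before applying the theorems. On the other hand, your elimination steps for the delay channel (solving for the slack vector, resp.\ Schur-complementing $\diag_{i=1}^N(Q_i)$ to collapse onto $\sum_{i=0}^N A_i$ and $\sum_{i=0}^N C_i$) are the same bookkeeping the paper performs in the $L_1$ case, so once the fictitious-uncertainty step is inserted the rest of your outline goes through.
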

\blue{\begin{proof}
  The proof of this result is based on the fact that we can also rewrite the performance characterization problem as a robust stability problem by setting $u=My$ where $M\ge0$ is a full-block matrix such that $||M||_p=\gamma_p^{-1}$ (or, equivalently, $||M||_p\le\gamma_p^{-1}$). Since for linear positive systems scaled-small gain results are non-conservative, then the stability of the interconnection is equivalent to the fact that the $L_p$-gain of the transfer $u\mapsto y$ is at most (or smaller than) $\gamma_p$. Note, however, that the interconnection result applies to square uncertainties, a condition that is violated when  $n_u\ne n_y$ (recall that the consideration of $D$ scalings requires the uncertainty to be square). This issue can be easily resolved by suitably augmenting the vector $y$ or the vector $u$ and appropriately padding the associated matrices with zeros.

We only prove this result in the $L_1$-gain case, the others are analogous, and we also assume, without loss of generality, that $n_u=n_y$. To this aim, let us consider the interconnection
  \begin{equation}
  \begin{array}{rcl}
        \begin{bmatrix}
      \dot{x}\\
      \hline
      z\\
      \hline
      y
    \end{bmatrix}&=&\begin{bmatrix}
      A_0 & \vline & \row_{i=1}^N(A_i) & \vline & E_u\\
      \hline
      \mathds{1}_N\otimes I_n & \vline & 0 & \vline & 0\\
      \hline
      C_0 & \vline & \row_{i=1}^N(C_i) & \vline & F_u
    \end{bmatrix} \begin{bmatrix}
      x\\
      \hline
      w\\
      \hline
      u
    \end{bmatrix}\\
    w&=&\Delta z,\ \Delta\in\boldsymbol{\Delta_d}\\
    u&=&My,\ ||M||_1\le\gamma_1^{-1}.
  \end{array}
  \end{equation}
  Applying now Theorem \ref{th:ebihara} on the above system with the extended uncertainty $\Delta_e=\diag(\Delta,M)$ with extended scaling $D_e=\diag(D,\epsilon I_{n_u})$,  $D\in\mathbb{D}^{Nn}_{\succ0}$, $\epsilon>0$, yields
  \begin{equation}
    \begin{bmatrix}
     \tilde \lambda\\
      \hline
     \tilde \mu\\
      \hline
      \epsilon\mathds{1}^T_{n_y}
    \end{bmatrix}^T\begin{bmatrix}
      A_0 & \vline & \row_{i=1}^N(A_i) & \vline & E_u\\
      \hline
      \mathds{1}_N\otimes I_n & \vline & 0 & \vline & 0\\
      \hline
      C_0 & \vline & \row_{i=1}^N(C_i) & \vline & F_u
    \end{bmatrix}<  \begin{bmatrix}
      0\\
      \hline
      \tilde\mu\\
      \hline
      \epsilon\gamma_1\mathds{1}_{n_u}
    \end{bmatrix}^T.
  \end{equation}
  Solving for $\tilde\mu$ as in the proof of Theorem \ref{th:TDS1}, dividing everything by $\epsilon$ and using the change of variables $\lambda=\tilde\lambda/\epsilon$ yield the result.
\end{proof}}
}

\subsection{Stability analysis -- the time-varying delay case}\label{sec:discreteTV}

We extend here the previous results to the case of time-varying discrete-delays. Interestingly, this class of delays includes, as a special case, \blue{scale}-delays \cite{Verriest:99b,Briat:book1}. A remark will be made in this regard. Note also that this case is different from the previous one as the gain of the delay operator will be different depending on the considered norm. This is formalized in the following result \citep{GuKC:03,Briat:11j,Briat:book1}:
\begin{proposition}
Let $p\in\{1,2,\infty\}$. Then, the time-varying delay operator
\begin{equation}
  \begin{array}{rclcl}
  \mathscr{T}_h&:&L_p&\mapsto &L_p\\
  &&w(t)&\mapsto&w(t-h(t)),\ h(t)\ge 0
  \end{array}
\end{equation}
has
\begin{itemize}
  \item an $L_p$-gain equal to $(1-\eta)^{-1/p}$ where $\dot{h}(t)\le\eta<1$, $p\in\mathbb{Z}_{>0}$ and
  \item an $L_\infty$-gain equal to 1.
\end{itemize}
\end{proposition}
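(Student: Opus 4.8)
The plan is to treat the two cases separately, in both relying on a change of variables in the time integral defining the $L_p$-norm of the output $z(t):=w(t-h(t))$.

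For finite $p$, first I would write
\[
||z||_{L_p}^p=\int_0^\infty ||w(t-h(t))||_p^p\,dt
\]
and perform the substitution $\tau=t-h(t)$. The key observation is that the constraint $\dot h(t)\le\eta<1$ forces the derivative of the map $t\mapsto t-h(t)$ to satisfy $1-\dot h(t)\ge 1-\eta>0$, so this map is strictly increasing and hence a valid (absolutely continuous) change of variables with $dt=(1-\dot h(t))^{-1}d\tau$. Since $t-h(t)\ge(1-\eta)t-h(0)\to\infty$, the image of $[0,\infty)$ is $[-h(0),\infty)$, which covers all nonnegative $\tau$; using the convention $w(\tau)=0$ for $\tau<0$ and bounding the Jacobian by $(1-\eta)^{-1}$ gives
\[
||z||_{L_p}^p\le\frac{1}{1-\eta}\int_0^\infty ||w(\tau)||_p^p\,d\tau=\frac{1}{1-\eta}||w||_{L_p}^p,
\]
so that $||\mathscr{T}_h||_{L_p-L_p}\le(1-\eta)^{-1/p}$ for every admissible $h$. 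To show the bound is tight over the admissible class, I would exhibit the extremal delay $h(t)=\eta t$, for which $z(t)=w((1-\eta)t)$ and the substitution $s=(1-\eta)t$ turns the inequality into the identity $||z||_{L_p}=(1-\eta)^{-1/p}||w||_{L_p}$ for every input, so the value is attained.

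For $p=\infty$, the argument is more direct: since $||z||_{L_\infty}=\esssup_{t\ge0}||w(t-h(t))||_\infty$ and the argument $t-h(t)$ remains in the domain on which $w$ is defined (with $w\equiv0$ for negative times), the essential supremum of the shifted signal cannot exceed that of $w$, giving $||\mathscr{T}_h||_{L_\infty-L_\infty}\le1$. Equality follows by taking a constant (or eventually constant) input, whose shifted version retains the same supremum, so no $(1-\eta)$ factor appears.

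The main obstacle I anticipate is technical rather than conceptual: justifying the change of variables rigorously requires $h$ to be absolutely continuous so that $\dot h$ exists almost everywhere and the substitution formula applies, and one must handle the lower limit $-h(0)\le0$ carefully via the zero-extension convention. The only genuinely delicate point is the tightness claim, where one must confirm that the extremal delay $h(t)=\eta t$ is admissible under the stated hypotheses—it is, since $h(t)\ge0$ and $\dot h\equiv\eta<1$, no boundedness being imposed—thereby showing the upper bound is not conservative and the gain indeed equals $(1-\eta)^{-1/p}$.
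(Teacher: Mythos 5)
Your argument is correct. The paper itself gives no proof of this proposition -- it is stated as a known fact with citations to \cite{GuKC:03,Briat:11j,Briat:book1} -- and what you write is precisely the standard argument from that literature: the monotone change of variables $\tau=t-h(t)$ with Jacobian bound $(1-\dot h(t))^{-1}\le(1-\eta)^{-1}$ for finite $p$, the extremal delay $h(t)=\eta t$ to show the bound is attained over the admissible class, and the direct essential-supremum argument for $p=\infty$. You also correctly flag the one interpretive subtlety, namely that ``gain equal to $(1-\eta)^{-1/p}$'' must be read as the worst case over all delays with $\dot h\le\eta$ (for an individual $h$, e.g.\ a constant delay, the gain is smaller), and your tightness construction resolves it.
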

\blue{The above result clearly shows that, unlike for the constant delay-operator, the value of the gain of the time-varying delay operator depends on the considered norm. Note also that while the $L_p$-gains, $p\in\mathbb{Z}_{>0}$, depend on the maximum rate of change of the delay, the $L_\infty$-gain does not, a property that makes it appropriate for the consideration of fast-varying delays. This fact is not surprising since delay operators do not change the amplitude of the input signal but may change the value of its integral by appropriately distorting time; see e.g. \cite{Kao:07}. Yet, it is possible to determine a finite $L_2$-gain when the rate of change of the delay may exceed one (see \cite{Shustin:07}) and an analogous result for the $L_1$-gain seems to be still missing. Finally, it is interesting to note that the $L_\infty$-gain is smaller than the others since $1<(1-\eta)^{-1/p}$ for any $p\in\mathbb{Z}_{>0}$ and, in this regard, this gain may be more interesting to use than the other. This claim will be supported by the main results of the section but, before proving them, we need to state the following preliminary result:}
%


\begin{proposition}\label{prop:TDS3}
The linear delay system
\begin{equation}\label{eq:TDS3}
  \dot{x}(t)=A_0x(t)+\sum_{i=1}^NA_ix(t-h_i(t))
\end{equation}
  coincides with the uncertain system \eqref{eq:uncertain} where $A=A_0$, $E=[A_1\ \ldots\ A_N]$, $C=\mathds{1}_N\otimes I_n$, $F=0$ and
  \begin{equation*}
    \Delta\in\left\{\diag_{i=1}^N(I_n\otimes\mathscr{T}_{h_i}):\ h_i:\mathbb{R}_{\ge0}\to\mathbb{R}_{\ge0},\ i=1,\ldots,N\right\}.
  \end{equation*}
\end{proposition}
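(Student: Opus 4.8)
The plan is to prove the statement by direct substitution, in complete analogy with Proposition \ref{prop:TDS1}; the only genuinely new point is that the frequency-domain multiplier $e^{-sh_i}I_n$ of the constant-delay case is here replaced by the time-domain operator $I_n\otimes\mathscr{T}_{h_i}$, so the whole identification must be carried out in the time domain rather than on transfer functions. First I would substitute $C=\mathds{1}_N\otimes I_n$ and $F=0$ into the output equation of \eqref{eq:uncertain}. Since $(\mathds{1}_N\otimes I_n)x(t)$ simply stacks $N$ copies of $x(t)$, this gives $z(t)=\col_{i=1}^N\{x(t)\}\in\mathbb{R}^{Nn}$, i.e. the $i$-th block of $z$ equals $x(t)$ for every $i$.

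Next I would apply the uncertainty $\Delta=\diag_{i=1}^N(I_n\otimes\mathscr{T}_{h_i})$ to this $z$. The key point to verify is that the block $I_n\otimes\mathscr{T}_{h_i}$ acts on the $n$-dimensional signal $z_i(t)=x(t)$ by applying the scalar time-varying delay operator $\mathscr{T}_{h_i}$ componentwise, so that its output is $x(t-h_i(t))$; the factor $I_n$ merely preserves the spatial structure, and the order of the Kronecker product is immaterial precisely because the spatial factor is the identity. It follows that $w(t)=\Delta z(t)=\col_{i=1}^N\{x(t-h_i(t))\}$.

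Finally, substituting $A=A_0$, $E=[A_1\ \ldots\ A_N]$ and the above expression for $w$ into the state equation $\dot{x}=Ax+Ew$ yields $\dot{x}(t)=A_0x(t)+\sum_{i=1}^NA_ix(t-h_i(t))$, which is exactly \eqref{eq:TDS3}. The only mild obstacle is the second step: one must confirm that $I_n\otimes\mathscr{T}_{h_i}$ indeed realizes a componentwise time-varying delay and that causality is respected, which uses $h_i(t)\ge0$ so that $t-h_i(t)\le t$. Beyond this bookkeeping the identification is immediate and requires no estimates, so the proof reduces, as in Proposition \ref{prop:TDS1}, to direct substitution.
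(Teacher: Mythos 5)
Your proposal is correct and matches the paper's approach: the paper gives no separate argument for this proposition beyond the ``direct substitution'' already invoked for Proposition \ref{prop:TDS1}, and your time-domain bookkeeping (computing $z=\col_{i=1}^N\{x(t)\}$ from $C=\mathds{1}_N\otimes I_n$, applying $\Delta$ blockwise to obtain $w=\col_{i=1}^N\{x(t-h_i(t))\}$, and substituting into $\dot{x}=Ax+Ew$) is exactly that substitution carried out explicitly. The remarks about the componentwise action of $I_n\otimes\mathscr{T}_{h_i}$ and causality via $h_i(t)\ge0$ are accurate and do not change the route.
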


We can now state the main result that unifies the results in  \citep{Haddad:04,AitRami:09,Briat:11h,Shen:15}
\begin{theorem}\label{th:TDS3}
    Assume that the system \eqref{eq:TDS3} is internally positive and that $t-h_i(t)\to\infty$ as $t\to\infty$ for all $i=1,\ldots,N$. Then, the following statements are equivalent:
  \begin{enumerate}[(i)]
   \item The system \eqref{eq:TDS3} is asymptotically stable.
    \item The matrix $\textstyle\sum_{i=0}^NA_i$ is Hurwitz stable.
    \item There exist vectors $\lambda,\mu_i\in\mathbb{R}^n_{>0}$, $i=1,\ldots,N$ such that
    \begin{equation}
      \begin{bmatrix}
        A_0 & \row_{i=1}^N(A_i)\\
        \mathds{1}_{N}\otimes I_n & -I_{nN}\\
      \end{bmatrix}\begin{bmatrix}
        \lambda\\
        \col_{i=1}^N(\mu_i)
      \end{bmatrix}<0.
    \end{equation}
  \end{enumerate}
\end{theorem}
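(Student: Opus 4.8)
The plan is to mirror the constant-delay argument of Theorem \ref{th:TDS1}, replacing the constant-delay operator by the time-varying one and exploiting the single fact that distinguishes this case: the operator $\mathscr{T}_h$ has unitary $L_\infty$-gain \emph{irrespective of the rate of change} $\dot h(t)$. Consequently the block-diagonal delay operator $\Delta=\diag_{i=1}^N(I_n\otimes\mathscr{T}_{h_i})$ belongs to $\BDeltab{\infty}$, and the representation of Proposition \ref{prop:TDS3} places system \eqref{eq:TDS3} exactly in the setting of the $L_\infty$ scaled small-gain theorem (Theorem \ref{th:briat}). I would therefore prove the chain (iii) $\Rightarrow$ (i) $\Rightarrow$ (ii) $\Rightarrow$ (iii), establishing all three equivalences at once.

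First I would instantiate Theorem \ref{th:briat} on the data $A=A_0$, $E=\row_{i=1}^N(A_i)$, $C=\mathds{1}_N\otimes I_n$, $F=0$. The only point requiring attention is the scalings: since each block $I_n\otimes\mathscr{T}_{h_i}$ is a \emph{repeated} scalar operator (the same delay acts on every channel), any constant diagonal matrix commutes with it in the operator sense, so the admissible $D$-scalings may be taken block-diagonal with $i$-th block an arbitrary positive diagonal matrix $D_i=\diag(\mu_i)$ rather than a mere scalar $d_iI_n$. Writing $D\mathds{1}_{nN}=\col_{i=1}^N(\mu_i)$, the inequality of Theorem \ref{th:briat} becomes $A_0\lambda+\sum_iA_i\mu_i<0$ together with $\lambda-\mu_i<0$ for each $i$, which is precisely the matrix inequality of statement (iii). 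This identifies (iii) with the scaled small-gain certificate and hence, by Theorem \ref{th:briat}, with robust stability of the interconnection over the whole ball $\BDeltab{\infty}$.

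The equivalence (ii) $\Leftrightarrow$ (iii) is then a short Metzler-matrix computation using only $A_i\ge0$. Given (iii), the bounds $\mu_i>\lambda>0$ and $A_i\ge0$ give $A_i\mu_i\ge A_i\lambda$ componentwise, whence $(\sum_{i=0}^NA_i)\lambda\le A_0\lambda+\sum_{i\ge1}A_i\mu_i<0$; since $\sum_{i=0}^NA_i$ is Metzler this yields its Hurwitz stability, i.e.\ (ii). Conversely, if (ii) holds there is $v>0$ with $(\sum_{i=0}^NA_i)v<0$, and taking $\lambda=v$, $\mu_i=(1+\varepsilon)v$ for $\varepsilon>0$ small preserves the strict inequality while enforcing $\mu_i>\lambda$, giving (iii). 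The direction (iii) $\Rightarrow$ (i) is immediate: the certificate guarantees robust stability over $\BDeltab{\infty}$, and the admissible delay operators form a subset of this ball, so \eqref{eq:TDS3} is stable; here the hypothesis $t-h_i(t)\to\infty$ is what renders the delay operators genuinely bounded and the asymptotic-stability notion meaningful.

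The main obstacle is the necessity direction (i) $\Rightarrow$ (ii): Theorem \ref{th:briat} only asserts stability for \emph{every} $\Delta\in\BDeltab{\infty}$, so stability for the single prescribed family of delays does not \emph{a priori} return the certificate. The key is that, for internally positive systems, the $L_\infty$ scaled small-gain condition is non-conservative and its worst case is governed by the DC gain $\widehat{\Sigma}(0)$; this worst case is realized by the zero-delay member $h_i\equiv0$ (the identity, of unitary gain) of the admissible class, for which \eqref{eq:TDS3} collapses to $\dot x=(\sum_{i=0}^NA_i)x$. I would make this rigorous by contraposition through a monotone-comparison argument: if $\sum_{i=0}^NA_i$ is not Hurwitz then, being Metzler, it admits by Perron--Frobenius a nonnegative eigenvector $0\ne w\ge0$ with $(\sum_{i=0}^NA_i)w\ge0$; initializing \eqref{eq:TDS3} at the constant history $x\equiv w$ gives $\dot x(0)=(\sum_{i=0}^NA_i)w\ge0$, and positivity together with monotonicity of the flow then prevent $x(t)$ from converging to $0$, contradicting (i). This closes the loop and, combined with the equivalences above, proves the theorem.
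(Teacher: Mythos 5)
Your proposal is correct and, for the substantive implications, runs along the same lines as the paper: the certificate in (iii) is obtained by instantiating the $L_\infty$ scaled small-gain theorem (Theorem \ref{th:briat}) on the representation of Proposition \ref{prop:TDS3} with full-diagonal scalings (legitimate because the uncertainty blocks are repeated), and the sufficiency (iii) $\Rightarrow$ (i) rests on exactly the fact the paper uses, namely that $\mathscr{T}_{h}$ has unit $L_\infty$-gain irrespective of $\dot h$. The equivalence (ii) $\Leftrightarrow$ (iii) you verify by a direct Metzler computation where the paper simply cites Theorem \ref{th:TDS1}; both are fine. Where you genuinely diverge is the necessity (i) $\Rightarrow$ (ii): the paper disposes of it in one line by observing that constant (in particular zero) delays are admissible members of the family, so stability forces the Hurwitz stability of $\sum_{i=0}^N A_i$ via Theorem \ref{th:TDS1}; you instead argue by contraposition with a Perron--Frobenius eigenvector $w\ge0$, $w\ne0$, of the non-Hurwitz Metzler matrix $\sum_{i=0}^NA_i$ and a monotone-comparison argument. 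Your route is longer but buys something real: it shows that if $\sum_{i=0}^NA_i$ is not Hurwitz then asymptotic stability fails for \emph{every} admissible delay realization, not merely for the zero-delay one, which is the relevant statement if (i) is read for a fixed prescribed delay rather than for the whole class. One small point of care in that step: because the delays are time-varying the equation is nonautonomous, so the ``constant subequilibrium history gives a nondecreasing solution'' shortcut from the autonomous theory does not apply verbatim; the clean way to finish is the comparison principle for quasimonotone functional differential equations applied to the constant subsolution $u\equiv w$, which satisfies $\dot u=0\le A_0w+\sum_{i=1}^NA_iw$ and therefore yields $x(t)\ge w\ne0$ for all $t\ge0$, so $x(t)\not\to0$. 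With that phrasing the argument is complete.
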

\begin{proof}
First note that since the delays are time-varying, then the second statement is necessary for the stability of the system \eqref{eq:TDS3} and, hence, (i) implies (ii). Note also that (ii) and (iii) are equivalent from Theorem \ref{th:TDS1}. Finally, using Theorem \ref{th:briat} and the fact that the $L_\infty$-gain of the time-varying delay operator is equal to one, we can conclude that (iii) implies (i), which completes the proof.
\end{proof}
\blue{
  Interestingly, we can see that a necessary and sufficient condition for a linear positive time-delay system with constant or time-varying discrete-delays is that $\textstyle\sum_{i=0}^NA_i$ be Hurwitz stable. As a consequence, the existence of diagonal solutions $P,Q_i\in\mathbb{D}^n_{\succ0}$, $i=1,\ldots,N$ to the Riccati inequality
    \begin{equation}\label{eq:kdsodlsmdkkdsmd2}
      A_0^TP+PA_0+\sum_{i=1}^N\left(Q_i+PA_iQ_i^{-1}A_i^TP\right)\prec0
    \end{equation}
    is also a necessary and sufficient condition for the stability of linear positive systems with time-varying delays. This result is, however, rather surprising if we take into account the fact that if we were applying the $L_2$-scaled small-gain result (i.e. Theorem \ref{th:colombino}) on the system \eqref{eq:TDS3}, we would get the following Riccati inequality
 \begin{equation}\label{eq:lkdsmdq}
      A_0^TP+PA_0+\sum_{i=1}^N\left(Q_i+(1-\eta_i)^{-1}PA_iQ_i^{-1}A_i^TP\right)\prec0
 \end{equation}
 where the $\eta_i$'s are such that $\dot{h}_i(t)\le\eta_i<1$, $i=1,\ldots,N$,  for all $t\ge0$. Therefore, if \eqref{eq:lkdsmdq} is feasible, then so is \eqref{eq:kdsodlsmdkkdsmd2}, but the converse is not true in general. In this regard, we would not be able to predict that the stability of \eqref{eq:TDS1} is equivalent to the stability of \eqref{eq:TDS3} using Theorem  \ref{th:colombino}, nor even Theorem \ref{th:ebihara}. 

 More generally, the stability condition for $N=1$ obtained in the $L_p$, $p\in\{1,2,\infty\}$, framework is equivalent to saying that
 \begin{equation}
   \rho(-A_0^{-1}A_1)<(1-\eta)^{1/p}
 \end{equation}
 which indicates that the $L_1$-based result is more conservative than the $L_2$-based result which is, in turn, more conservative than the $L_\infty$-based result. Even though this remark seems contradictory with the fact that scaled small-gain results are nonconservative, it is actually not since we are considering now with time-varying operators and also because stability in the $L_p$-sense is analyzed. Indeed, we have the set of nonnegative continuous functions converging to 0 with finite $L_1$-norm is strictly included in the set of nonnegative continuous functions converging to 0 with finite $L_2$-norm, which is itself strictly included in the set of nonnegative continuous functions converging to 0 with finite $L_\infty$-norm. The fact that the $L_\infty$-based result gives the weakest stability condition demonstrates its relevance.
}

\blue{\subsection{Performance analysis -- the time-varying delay case}

Let us now consider the system
\begin{equation}\label{eq:TDS2_input}
\begin{array}{rcl}
  \dot{x}(t)&=&A_0x(t)+\sum_{i=1}^NA_ix(t-h_i(t))+E_uu(t)\\
  y(t)&=&C_0x(t)+\sum_{i=1}^NC_ix(t-h_i(t))+F_uu(t)\\
  \end{array}
  \end{equation}
where $u\in\mathbb{R}^{n_u}$ and $y\in\mathbb{R}^{n_y}$ are the input and the output, respectively. As for system \eqref{eq:TDS1_input}, the above system is internally positive if and only if $A_0$ is Metzler and $C,E_u,F_u,A_i,C_i$ are nonnegative for all $i=1,\ldots,N$.

We then have the following result:
\begin{theorem}
  Assume that the system \eqref{eq:TDS2_input} is positive. Then, the following statements hold:
  \begin{enumerate}[(i)]
    \item  Assume that the delays are differentiable and such that $\dot{h}_i(t)\le\eta_i<1$, $i=1,\ldots,N$. Then, the system \eqref{eq:TDS2_input} is asymptotically stable and has an $L_1$-gain smaller than $\gamma_1$ if there exists a vector $\lambda\in\mathbb{R}^n_{>0}$ such that
    \begin{equation}
        \begin{bmatrix}
          \lambda\\
          \mathds{1}_{n_y}
          \end{bmatrix}^T\begin{bmatrix}
            \textstyle A_0+\sum_{i=1}^N(1-\eta_i)^{-1}A_i &\quad& E_u\\
           \textstyle  C_0+\sum_{i=1}^N(1-\eta_i)^{-1}C_i &\quad& F_u
          \end{bmatrix}<\begin{bmatrix}
            0\\
            \gamma_1\mathds{1}_{n_u}
          \end{bmatrix}^T.
    \end{equation}
      \item  Assume that the delays are differentiable and such that $\dot{h}_i(t)\le\eta_i<1$, $i=1,\ldots,N$. Then, the system \eqref{eq:TDS2_input} is asymptotically stable and has an $L_2$-gain smaller than $\gamma_2$ if there exist matrices $P,Q_i\in\mathbb{D}^n_{\succ0}$, $i=1,\ldots,N$, such that
          \begin{equation}
        \begin{bmatrix}
          A_0^TP+PA_0+\sum_{i=1}^NQ_i &\quad \row_{i=1}^N(PA_i) &\quad PE_u & C_0^T\\
          \star & -\diag_{i=1}^N((1-\eta_i)Q_i) & 0 &  \col_{i=1}^N(C_i^T)\\
          \star  &\star& -\gamma_2 I & F_u^T\\
                    \star  &\star& \star & -\gamma_2 I
        \end{bmatrix}\prec0.
    \end{equation}
    %
    \item Assume that the delays are such that  $t-h_i(t)\to\infty$ as $t\to\infty$ for all $i=1,\ldots,N$. Then,  the system \eqref{eq:TDS2_input} is  asymptotically stable and has an $L_\infty$-gain smaller than $\gamma_\infty$ if and only if there exists a vector $\lambda\in\mathbb{R}^n_{>0}$ such that
    \begin{equation}
    \begin{bmatrix}
            \textstyle A_0+\sum_{i=1}^NA_i &\quad& E_u\\
           \textstyle  C_0+\sum_{i=1}^NC_i &\quad& F_u
          \end{bmatrix}\begin{bmatrix}
          \lambda\\
          \mathds{1}_{n_u}
          \end{bmatrix}<\begin{bmatrix}
            0\\
            \gamma_\infty\mathds{1}_{n_y}
          \end{bmatrix}.
    \end{equation}
  \end{enumerate}
\end{theorem}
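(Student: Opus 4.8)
The plan is to reduce each $L_p$-gain bound to a robust stability problem, exactly as in the proof of Theorem \ref{prop:perf_CDD}, and then invoke the matching scaled small-gain theorem. I would close the loop with a full block $u=My$, $M\ge0$, $||M||_p\le\gamma_p^{-1}$, so that ``\eqref{eq:TDS2_input} is stable with $L_p$-gain smaller than $\gamma_p$'' becomes the stability of the interconnection of the extended plant (state $x$; disturbance channel $w=\Delta z$ carrying the delay uncertainty $\Delta=\diag_{i=1}^N(I_n\otimes\mathscr{T}_{h_i})$ of Proposition \ref{prop:TDS3}; performance channel $u=My$) with $\Delta_e=\diag(\Delta,M)$ and scaling $D_e=\diag(D,\epsilon I_{n_u})$, $D\in\mathcal{D}_\Delta$, $\epsilon>0$. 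The only ingredient that differs from the constant-delay proof is the relevant gain of $\mathscr{T}_{h_i}$, which is now $(1-\eta_i)^{-1/p}$ for $p\in\{1,2\}$ and $1$ for $p=\infty$.

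For statement (iii) I would argue that, since $||\mathscr{T}_{h_i}||_{L_\infty}=1$, the delay block lies in $\BDeltab{\infty}$ just as the constant-delay operator does, so the proof is verbatim that of Theorem \ref{prop:perf_CDD}(iii): apply Theorem \ref{th:briat} to the extended plant, solve for the delay-channel multiplier as in Theorem \ref{th:TDS1}, divide by $\epsilon$ and set $\lambda=\tilde\lambda/\epsilon$; this collapses the delay scaling and merges $A_0$ with $\sum_{i=1}^NA_i$ and $C_0$ with $\sum_{i=1}^NC_i$, giving the stated condition. I expect to recover the full equivalence (not just sufficiency) here because the $L_\infty$-gain is exactly $1$ and is insensitive to the time-distortion produced by the delay; the necessity of the zero-delay condition under $t-h_i(t)\to\infty$ would follow as in Theorem \ref{th:TDS3}.

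For (i) and (ii) only sufficiency is claimed, and here $||\mathscr{T}_{h_i}||_{L_p}=(1-\eta_i)^{-1/p}>1$, so $\Delta$ is no longer in the unit ball. First I would normalise, writing $\mathscr{T}_{h_i}=(1-\eta_i)^{-1/p}\bar{\mathscr{T}}_{h_i}$ with $||\bar{\mathscr{T}}_{h_i}||_{L_p}=1$, i.e. $\diag_{i=1}^N(I_n\otimes\bar{\mathscr{T}}_{h_i})\in\BDeltab{p}$. This absorbs $(1-\eta_i)^{-1/p}$ into the matrices seen by the unit-ball uncertainty, replacing $A_i$ by $(1-\eta_i)^{-1/p}A_i$ and $C_i$ by $(1-\eta_i)^{-1/p}C_i$ in the disturbance/output channels, while the delay output $z=(\mathds{1}_N\otimes I_n)x$ stays unchanged. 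Then I would apply Theorem \ref{th:ebihara} ($p=1$) or Theorem \ref{th:colombino} ($p=2$) and perform the same elimination as in Theorem \ref{prop:perf_CDD}; for $p=1$ this directly yields the condition with $A_0+\sum_{i=1}^N(1-\eta_i)^{-1}A_i$ and $C_0+\sum_{i=1}^N(1-\eta_i)^{-1}C_i$. Only sufficiency survives because the single operator $\mathscr{T}_{h_i}$ has been over-bounded by the entire unit ball of $L_p$-operators.

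The hard part will be routing the factor to the correct block of the $L_2$ LMI in (ii). After normalisation the $(2,2)$ delay block reads $-\diag_{i=1}^N(Q_i)$ while the couplings $(1,2)$ and $(2,4)$ read $\row_{i=1}^N((1-\eta_i)^{-1/2}PA_i)$ and $\col_{i=1}^N((1-\eta_i)^{-1/2}C_i^T)$; I would then apply a congruence with $T=\diag(I_n,\diag_{i=1}^N((1-\eta_i)^{1/2}I_n),I_{n_u},I_{n_y})$, which restores $\row_{i=1}^N(PA_i)$ and $\col_{i=1}^N(C_i^T)$ and converts the $(2,2)$ block into $-\diag_{i=1}^N((1-\eta_i)Q_i)$, leaving $A_0^TP+PA_0+\sum_{i=1}^NQ_i$ untouched since $z$ was not rescaled, thereby reproducing the stated LMI. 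The two points I expect to need the most care are verifying that this congruence deposits the factor as $(1-\eta_i)Q_i$ rather than its inverse or square root, and articulating cleanly why normalisation forfeits necessity for $p\in\{1,2\}$ but not for $p=\infty$.
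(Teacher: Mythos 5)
Your proposal is correct and follows essentially the route the paper intends: the paper omits this proof as being ``similar to that of Theorem \ref{prop:perf_CDD}'', and your argument is exactly that adaptation — closing the loop with $u=My$, $\|M\|_p\le\gamma_p^{-1}$, normalising $\mathscr{T}_{h_i}$ by its $L_p$-gain $(1-\eta_i)^{-1/p}$ so the uncertainty lies in $\BDeltab{p}$, and applying Theorems \ref{th:ebihara}, \ref{th:colombino} and \ref{th:briat} respectively, with the congruence by $\diag(I_n,\diag_{i=1}^N((1-\eta_i)^{1/2}I_n),I_{n_u},I_{n_y})$ correctly depositing the factor as $(1-\eta_i)Q_i$ in the $(2,2)$ block while leaving the $\sum_iQ_i$ term (which comes from the unrescaled channel $z=(\mathds{1}_N\otimes I_n)x$) untouched. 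Your explanation of why necessity is lost for $p\in\{1,2\}$ but retained for $p=\infty$ is also consistent with the paper's discussion following Theorem \ref{th:TDS3}.
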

\begin{proof}
  The proof is similar to that of Theorem \ref{prop:perf_CDD} and is thus omitted.
\end{proof}}
\blue{\begin{remark}
  It is worth mentioning that the condition in statement (i) is necessary and sufficient while it is unclear, at the moment, whether necessity holds for the conditions in the statements (ii) and (iii).
\end{remark}}

\blue{

\section{Stability and performance of linear positive delay-difference equations}\label{sec:difference}

\subsection{Stability analysis}

We consider in this section, the case of delay-difference equations \cite{Avellar:80,Damak:14,Damak:15,Melchor:16,Melchor:13,Shen:15} of the form
\begin{equation}\label{eq:delaydifference}
  x(t)=\sum_{i=1}^NA_ix(t-h_i)
\end{equation}
where the delays $h_i$ are such that $h_i>0$ for all $i=1,\ldots,N$. Clearly, the system is positive if and only if the matrices $A_i$ are nonnegative for all $i=1,\ldots,N$. Note that this system can be rewritten as the interconnection
\begin{equation}\label{eq:delaydifference2}
  \begin{array}{rcl}
    x(t)&=&Ew(t)\\
    z(t)&=&Cx(t)
  \end{array}
\end{equation}
where $E=\begin{bmatrix}
  A_1 & \ldots & A_n
\end{bmatrix}$, $C=\mathds{1}_N\otimes I_n$ and $w=\Delta z$, $\Delta\in\boldsymbol{\Delta_d}$. We then have the following result:
\begin{theorem}\label{th:strongly}
  Assume that the system \eqref{eq:delaydifference} is positive. Then, the following statements are equivalent:
  \begin{enumerate}[(i)]
    \item The delay-difference equation is asymptotically stable (or strongly-stable).
    \item $\rho\left(\sum_{i=1}^NA_ie^{-j\omega_i}\right)<1$ for all $\omega_i\in\mathbb{R}$, $i=1,\ldots,N$.
    \item $\rho\left(\sum_{i=1}^NA_i\right)<1$.
    \item There exists a $\mu\in\mathbb{R}^{nN}_{>0}$ such that
    \begin{equation}
      \mu^T\left(-I_{nN}+(\mathds{1}_N\otimes I_n)\row_{i=1}^N(A_i)\right)<0
    \end{equation}
    holds.
    \item There exists a $\mu\in\mathbb{R}^{nN}_{>0}$ such that
    \begin{equation}
      \mu^T\left(\sum_{i=1}^NA_i-I_{n}\right)<0
    \end{equation}
    holds.
    \item There exist diagonal matrices $Q_i\in\mathbb{D}^n_{\succ0}$ such that the LMI
    \begin{equation}\label{eq:jdszkdjsakldjsldlj}
      \begin{bmatrix}
        -\diag_{i=1}^N(Q_i) & \star\\
        \diag_{i=1}^N(Q_i)(\mathds{1}_N\otimes I_n)\row_{i=1}^N(A_i) & -\diag_{i=1}^N(Q_i)
      \end{bmatrix}   \prec0
    \end{equation}
    holds.
  \end{enumerate}
\end{theorem}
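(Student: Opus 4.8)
The plan is to collapse all six statements onto the single scalar condition $\rho\left(\sum_{i=1}^N A_i\right)<1$, treating (i)$\Leftrightarrow$(ii)$\Leftrightarrow$(iii) analytically and then recognizing (iv), (v), (vi) as its linear-programming and LMI avatars. The algebraic backbone is the identity $\rho(CE)=\rho(EC)=\rho\left(\sum_{i=1}^N A_i\right)$: writing $E=\row_{i=1}^N(A_i)$ and $C=\mathds{1}_N\otimes I_n$ one computes $EC=\sum_{i=1}^N A_i\in\mathbb{R}_{\ge0}^{n\times n}$ and $M:=CE=(\mathds{1}_N\otimes I_n)\row_{i=1}^N(A_i)\in\mathbb{R}_{\ge0}^{nN\times nN}$, and since $CE$ and $EC$ share the same nonzero spectrum (and both matrices are nonnegative, so the spectral radius is a genuine eigenvalue), their spectral radii coincide. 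This single identity ties the $nN$-dimensional formulations (ii), (iv), (vi) to the $n$-dimensional ones (iii), (v).

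For (i)$\Leftrightarrow$(ii) I would view \eqref{eq:delaydifference} as the interconnection \eqref{eq:delaydifference2} of the static nonnegative gain $M$ with the diagonal delay block $\Delta\in\boldsymbol{\Delta_d}$, whose frequency response is $\diag_{i=1}^N(e^{-sh_i}I_n)$, so that the characteristic equation is $\det\left(I-\sum_{i=1}^N A_i e^{-sh_i}\right)=0$. Strong stability of a linear difference equation is classically equivalent to $\sup_{\omega_1,\ldots,\omega_N\in\mathbb{R}}\rho\left(\sum_{i=1}^N A_i e^{-j\omega_i}\right)<1$, and I would invoke this characterization (see e.g. \cite{GuKC:03,Shen:15}) to obtain (i)$\Leftrightarrow$(ii). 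The step (ii)$\Leftrightarrow$(iii) is then a pure nonnegativity argument: since each $A_i\ge0$, the triangle inequality gives $\left|\sum_{i=1}^N A_i e^{-j\omega_i}\right|\le\sum_{i=1}^N A_i$ entrywise, and the spectral radius of a complex matrix is dominated by that of its entrywise modulus, which is itself monotone under entrywise domination; hence the supremum over the phases $\omega_i$ is attained at $\omega_i=0$ and equals $\rho\left(\sum_{i=1}^N A_i\right)$.

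It then remains to identify (iii)$\Leftrightarrow$(iv)$\Leftrightarrow$(v)$\Leftrightarrow$(vi) as restatements of $\rho(M)<1$. For a nonnegative matrix, Schur stability is equivalent by Perron--Frobenius duality to the existence of a strictly positive left certificate, so $\rho\left(\sum_{i=1}^N A_i\right)<1$ is equivalent to $\exists\,\mu\in\mathbb{R}^n_{>0}$ with $\mu^T\left(\sum_{i=1}^N A_i-I_n\right)<0$, which is (v); applying the same duality to $M=CE$ and using $\rho(CE)=\rho(\sum A_i)$ gives (iv). For (vi), a Schur complement on \eqref{eq:jdszkdjsakldjsldlj} with $\mathcal{Q}:=\diag_{i=1}^N(Q_i)$ reduces the LMI to $M^T\mathcal{Q}M\prec\mathcal{Q}$, i.e. $\|DMD^{-1}\|_2<1$ with $D:=\mathcal{Q}^{1/2}$; because each $Q_i$ is diagonal, $\mathcal{Q}$ ranges over all of $\mathbb{D}^{nN}_{\succ0}$, so Proposition \ref{prop:Stoer} makes feasibility equivalent to $\inf_D\|DMD^{-1}\|_2=\rho(M)<1$ (only the value of the infimum is needed, not its attainment). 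The interconnection viewpoint of Theorem \ref{th:ILC} gives an equivalent bridge here, since the delay block has $\widehat{\Delta}(0)=I$ and $\widehat{G}_1(0)=M$, whence interconnection stability amounts again to $\rho(M)<1$.

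The main obstacle will be the analytic equivalence (i)$\Leftrightarrow$(ii): for difference equations a spectral-abscissa condition alone does not guarantee asymptotic stability, because roots of the characteristic equation can accumulate on vertical lines, so it is \emph{strong} stability (robustness to infinitesimal delay perturbations) that the frequency-domain spectral-radius condition characterizes. For positive systems this subtlety is benign, precisely because (ii)$\Leftrightarrow$(iii) shows the worst phases are $\omega_i=0$, so asymptotic and strong stability coincide; I would state this explicitly and lean on the classical difference-equation theory rather than reprove it. The remaining steps---the $\rho(CE)=\rho(EC)$ identity, the Perron--Frobenius dual certificate, and the Schur-complement/Stoer reduction---are routine.
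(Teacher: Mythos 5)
Your proof is correct and follows essentially the same route as the paper: the equivalence (i)$\Leftrightarrow$(ii) is delegated to the classical theory of difference equations (the paper cites Avellar and Hale rather than Gu et al., but it is the same fact), the chain (iii)$\Leftrightarrow$(iv)$\Leftrightarrow$(v) rests on $\rho(CE)=\rho(EC)$ together with the Perron--Frobenius left-eigenvector certificate for Schur stability of a nonnegative matrix, and (vi) is handled by a Schur complement reducing the LMI to $M^T\mathcal{Q}M\prec\mathcal{Q}$ with $\mathcal{Q}=\diag_{i=1}^N(Q_i)$ and hence, via Proposition \ref{prop:Stoer}, to $\rho(M)<1$. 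The one place you diverge is the link between the frequency-domain condition (ii) and the static conditions: the paper closes this gap by invoking a scaled small-gain argument on the interconnection \eqref{eq:delaydifference2} to relate (ii) and (iv), whereas you argue directly that $\rho\bigl(\sum_{i}A_ie^{-j\omega_i}\bigr)\le\rho\bigl(\sum_{i}A_i\bigr)$ via the entrywise-modulus bound and the monotonicity of the spectral radius on nonnegative matrices, so that the supremum over phases is attained at $\omega_i=0$. Your version is more elementary and self-contained, and it makes explicit the mechanism that underlies the paper's small-gain shortcut; your closing remark that this same fact is what reconciles asymptotic and strong stability for positive difference equations is a point the paper leaves implicit, and your observation that only the value (not the attainment) of the Stoer--Witzgall infimum is needed correctly handles the reducible case.
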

\begin{proof}
  The equivalence between the two first statements has been proved in  \cite{Avellar:80}. The equivalence between (iii), (iv) and (v) follows from simple algebraic manipulations and the theory of nonnegative matrices. The equivalence between (iii) and (iv) follows from the fact that \eqref{eq:jdszkdjsakldjsldlj} is equivalent to
  \begin{equation}
    -\diag_{i=1}^N(Q_i)+((\mathds{1}_N\otimes I_n)\row_{i=1}^N(A_i) )^T\diag_{i=1}^N(Q_i)(\mathds{1}_N\otimes I_n)\row_{i=1}^N(A_i) \prec0
  \end{equation}
  which is, in turn, equivalently, that $\textstyle\rho[(\mathds{1}_N\otimes I_n)\row_{i=1}^N(A_i)]=\rho\left(\sum_{i=1}^NA_i\right)<1$. The equivalence between (ii) and (iv) can be proved using a scaled-small gain argument on the system \eqref{eq:delaydifference2}.
\end{proof}

\begin{remark}
When the delays are time-varying, the condition in (iv) remains valid as long as the delays satisfy the condition $t-h_i(t)\to\infty$ as $t\to\infty$ for all $i=1,\ldots,N$. As a result, the stability of the delay-difference equation does not depend on the value of the delays nor on their time-varying nature as long a $L_\infty$-gain result is considered but will depend on the rate of variation of the delays when $L_1$-and $L_2$-gain results are considered. See the discussion below Theorem \ref{th:TDS3} for additional details.
\end{remark}

\subsection{Performance analysis}

Let us consider here the following delay-difference equation
\begin{equation}\label{eq:delaydifference_input}
\begin{array}{rcl}
    x(t)&=&\sum_{i=1}^NA_ix(t-h_i)+E_uu(t)\\
    y(t)&=&\sum_{i=1}^NC_ix(t-h_i)+F_uu(t)
\end{array}
\end{equation}
where the delays $h_i$ are such that $h_i>0$ for all $i=1,\ldots,N$. Clearly, the system is positive if and only if the matrices $A_i,E_u,C_i,F_u$ are nonnegative for all $i=1,\ldots,N$. We then have the following result:
\begin{theorem}
  Assume that the system \eqref{eq:delaydifference_input} is positive. Then, we have the following results:
  \begin{enumerate}[(i)]
    \item The system \eqref{eq:delaydifference_input} is asymptotically stable and has an $L_1$-gain smaller than $\gamma_1$ if and only if there exists a $\mu\in\mathbb{R}^{nN}_{>0}$ such that the condition
    \begin{equation}
    \begin{bmatrix}
       \mu\\
       \mathds{1}_{n_y}
    \end{bmatrix}^T\begin{bmatrix}
      (\mathds{1}_N\otimes I_n)\row_{i=1}^N(A_i)\quad & \col_{i=1}^N(E_u)\\
      \row_{i=1}^N(C_i) & F_u
    \end{bmatrix}<  \begin{bmatrix}
       \mu\\
       \gamma_1\mathds{1}_{n_u}
    \end{bmatrix}^T
    \end{equation}
    holds.
    \item The system \eqref{eq:delaydifference_input} is asymptotically stable and has an $L_2$-gain smaller than $\gamma_2$ if and only if there exists matrices $Q_i\in\mathbb{D}_{\succ0}^{n}$ such that the condition
  \begin{equation}
      \begin{bmatrix}
        -\diag_{i=1}^N(Q_i) & \star & \star & \star\\
        0 & -\gamma_2 I_{n_u} & \star & \star\\
       \col_{i=1}^N(Q_i)\row_{i=1}^N(A_i)\quad & \col_{i=1}^N(Q_iE_u) & -\diag_{i=1}^N(Q_i) & \star\\
      \row_{i=1}^N(C_i) & F_u   &  0 & -\gamma_2 I_{n_y}
      \end{bmatrix}   \prec0
    \end{equation}
    holds.
   \item  The system \eqref{eq:delaydifference_input} is asymptotically stable and has an $L_\infty$-gain smaller than $\gamma_\infty$ if and only if there exists a $\mu\in\mathbb{R}^{nN}_{>0}$ such that the condition
    \begin{equation}
   \begin{bmatrix}
      (\mathds{1}_N\otimes I_n)\row_{i=1}^N(A_i)\quad & \col_{i=1}^N(E_u)\\
      \row_{i=1}^N(C_i) & F_u
    \end{bmatrix} \begin{bmatrix}
       \mu\\
       \mathds{1}_{n_u}
    \end{bmatrix}<  \begin{bmatrix}
       \mu\\
       \gamma_\infty\mathds{1}_{n_y}
    \end{bmatrix}
    \end{equation}
    holds.
  \end{enumerate}
\end{theorem}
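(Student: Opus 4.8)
The plan is to mirror the proof of Theorem~\ref{prop:perf_CDD} and recast each $L_p$-gain bound as a robust stability problem. Concretely, I would close a fictitious performance loop $u=My$ with a full nonnegative block $M\ge0$ satisfying $||M||_p\le\gamma_p^{-1}$: since for positive systems the scaled small-gain results are non-conservative, asymptotic stability of the augmented interconnection for all such $M$ is equivalent to the transfer $u\mapsto y$ having $L_p$-gain at most $\gamma_p$. The crucial feature here is that the delays in \eqref{eq:delaydifference_input} are \emph{constant}, so each delay operator $\mathscr{D}_{h_i}$ composing $\boldsymbol{\Delta_d}$ has unit $L_p$-gain for every $p\in\{1,2,\infty\}$; this is exactly what will turn all three statements into two-sided equivalences rather than mere sufficient conditions. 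When $n_u\ne n_y$ the block $M$ is not square, which I would resolve as in Theorem~\ref{prop:perf_CDD} by augmenting $u$ or $y$ and padding the data matrices with zeros, so that without loss of generality $n_u=n_y$.

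Next I would build the linear fractional representation. Eliminating the algebraic variable $x$ from \eqref{eq:delaydifference_input} through $z=(\mathds{1}_N\otimes I_n)x$ and $x=\row_{i=1}^N(A_i)w+E_uu$ yields the \emph{static} positive plant
\[
G=\begin{bmatrix}
(\mathds{1}_N\otimes I_n)\row_{i=1}^N(A_i) & \col_{i=1}^N(E_u)\\
\row_{i=1}^N(C_i) & F_u
\end{bmatrix},
\]
which maps $(w,u)\mapsto(z,y)$ and is exactly the interconnection \eqref{eq:delaydifference2} enriched with the performance channel. Closing the loop with $w=\Delta z$, $\Delta\in\boldsymbol{\Delta_d}$, and $u=My$ puts the system in the form \eqref{eq:uncertain} driven by the block-diagonal uncertainty $\Delta_e=\diag(\Delta,M)$, to which I would associate the extended scaling $D_e=\diag(D,\epsilon I_{n_u})$ with $D=\diag_{i=1}^N(Q_i)$ and $\epsilon>0$.

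The three statements would then follow by applying the corresponding scaled small-gain result to $G$: Theorem~\ref{th:ebihara} for $p=1$, Theorem~\ref{th:colombino} for $p=2$, and Theorem~\ref{th:briat} for $p=\infty$. For $p=1$ this produces an inequality of the form $[\,\tilde\mu^T,\ \epsilon\mathds{1}_{n_y}^T\,]\,G<[\,\tilde\mu^T,\ \epsilon\gamma_1\mathds{1}_{n_u}^T\,]$, where $\tilde\mu$ is the delay scaling appearing symmetrically on the $z$-output and on the $w$-self-loop; dividing by $\epsilon$ and setting $\mu=\tilde\mu/\epsilon$ recovers statement~(i), and the $p=\infty$ case is obtained dually by right-multiplication, giving statement~(iii). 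For $p=2$, Theorem~\ref{th:colombino} applied to the static plant carries no $A^TP+PA$ term, so the dilated bounded-real inequality it returns---after identifying $D=\diag_{i=1}^N(Q_i)$ as the delay scalings and $\gamma_2$ as the performance weight---is precisely the LMI in statement~(ii).

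The step I expect to require the most care is the application of Theorems~\ref{th:ebihara}--\ref{th:briat}, which are stated for a \emph{dynamic} internally positive realization $(A,E,C,F)$ with Hurwitz $A$, to a plant that is purely algebraic and therefore has no such $A$. The resolution is that for a static positive plant the scaled small-gain condition collapses to $\inf_{D}||D\widehat{G}(0)D^{-1}||_p<1$ (equivalently $\rho<1$ in the repeated-scalar case), and ``asymptotic stability'' of \eqref{eq:delaydifference} must be read as the well-posedness/strong-stability of the static loop characterized in Theorem~\ref{th:strongly}; this is exactly the static scaled small-gain argument already invoked there for the autonomous equation \eqref{eq:delaydifference2}, now carried out on the augmented plant $G$. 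A secondary, routine point is to verify that the zero-padding used to square up the performance block does not destroy necessity, which follows as in Theorem~\ref{prop:perf_CDD}.
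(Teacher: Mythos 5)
Your proposal is correct and follows exactly the route the paper takes: its (one-sentence) proof likewise reformulates \eqref{eq:delaydifference_input} as a static positive plant in feedback with the delay operators and a fictitious performance block $u=My$, and then invokes the scaled small-gain results of Theorems \ref{th:ebihara}, \ref{th:colombino} and \ref{th:briat} as in Theorem \ref{prop:perf_CDD}. Your additional care about the plant being purely algebraic (reading ``stability'' as well-posedness of the static loop, as in Theorem \ref{th:strongly}) and about squaring up the $u$--$y$ channel is a faithful elaboration of details the paper leaves implicit.
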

\begin{proof}
The proof is based on reformulating the system \eqref{eq:delaydifference_input} into an LTI system interconnected with some delay operators and applying scaled small-gain results.
\end{proof}

\begin{remark}
Interestingly, the $L_\infty$ result remains the same when the delays are time-varying and such that $t-h_i(t)\to\infty$ as $t\to\infty$ for all $i=1,\ldots,N$. As for linear positive systems with discrete-delays, the conditions for the $L_1$- and the $L_2$-gains will be different (i.e. they will depend on the rate of variation of the delays and become sufficient only).
\end{remark}
}

\blue{\section{Stability and performance of linear positive coupled differential-difference equations with delays}\label{sec:coupled}

We consider here linear positive coupled differential-difference equations with delays which can be seen as an extension of the systems \eqref{eq:TDS1} and \eqref{eq:delaydifference}.  Such systems have been, for instance, studied in \cite{Hale:77,Niculescu:00a,Niculescu:00b,Pepe:03,Niculescu:06,Verriest:09d} and in the references therein. In particular, the case of linear positive  coupled differential-difference equations with single time-varying delay has been studied in \cite{Shen:15b} where necessary and sufficient conditions for their positivity and stability were obtained. We prove here that these results can be retrieved and extended to the case of multiple delays and to performance analysis using very simple scaled-small gain arguments.

\subsection{Stability analysis}

Let us start with the following preliminary result:
\begin{proposition}\label{prop:TDS1c}
The linear coupled differential-difference system with constant discrete delays
\begin{equation}\label{eq:TDS1c}
\begin{array}{rcl}
   \dot{x}_1(t)&=&A_0x_1(t)+\sum_{i=1}^NA_ix_2(t-h_i)\\
   x_2(t)&=&C_0x_1(t)+\sum_{i=1}^NC_ix_2(t-h_i)
\end{array}
  \end{equation}
  coincides with the uncertain system \eqref{eq:uncertain} where $A=A_0$, $E=\row_{i=1}^N(A_i)$, $C=\mathds{1}_N\otimes C_0$, $F=\mathds{1}_N\otimes\row_{i=1}^N(C_i)$ and
  \begin{equation*}
    \Delta\in\boldsymbol{\Delta_d}:=\left\{\diag_{i=1}^N(e^{-sh_i}I_n):\ h\ge0,\ \Re(s)\ge0\right\}.
  \end{equation*}
  %
\end{proposition}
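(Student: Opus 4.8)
The plan is to mimic the direct-substitution argument used for Proposition~\ref{prop:TDS1}, the only genuinely new feature being that the variable $x_2$ is defined implicitly through a difference equation rather than being the differential state itself. Accordingly, I would take the differential state of the LFT model~\eqref{eq:uncertain} to be $x:=x_1$, which immediately forces $A=A_0$. The nontrivial step is to choose the loop signals $w$ and $z$ so that the delay enters exactly where $x_2(t-h_i)$ appears. The natural choice is to let $z$ consist of $N$ stacked copies of the current value of $x_2$, that is $z=\mathds{1}_N\otimes x_2(t)=\col_{i=1}^N(x_2(t))$, and to let $w=\col_{i=1}^N(x_2(t-h_i))$ collect the delayed copies.

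With these definitions the interconnection constraint $w=\Delta z$ with $\Delta=\diag_{i=1}^N(e^{-sh_i}I_n)$ holds by inspection: the $i$-th block of $z$ is $x_2(t)$, and applying the scalar delay operator $e^{-sh_i}I_n$ (a shift by $h_i$) returns $x_2(t-h_i)$, which is precisely the $i$-th block of $w$. It then remains to check that the two equations of~\eqref{eq:TDS1c} reproduce the state and output maps of~\eqref{eq:uncertain}. For the differential equation, I would compute $Ew=\row_{i=1}^N(A_i)\,\col_{i=1}^N(x_2(t-h_i))=\sum_{i=1}^NA_ix_2(t-h_i)$, so that $\dot{x}_1=A_0x_1+Ew$ matches with $E=\row_{i=1}^N(A_i)$.

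The output map is where the distinction from Proposition~\ref{prop:TDS1} appears. Since $x_2$ is not a differential state but is fixed by the algebraic relation $x_2(t)=C_0x_1(t)+\sum_{i=1}^NC_ix_2(t-h_i)=C_0x_1(t)+\row_{i=1}^N(C_i)\,w(t)$, I would substitute this expression into each of the $N$ stacked copies forming $z$. Stacking yields $z=(\mathds{1}_N\otimes C_0)x_1+(\mathds{1}_N\otimes\row_{i=1}^N(C_i))w$, which identifies $C=\mathds{1}_N\otimes C_0$ and the feedthrough $F=\mathds{1}_N\otimes\row_{i=1}^N(C_i)$, exactly as claimed. The only point requiring care---and the reason $F\neq0$ here whereas $F=0$ in Proposition~\ref{prop:TDS1}---is that $x_2$ appears on both sides of its defining equation through its own delayed values; expressing $z$ solely in terms of $x_1$ and $w$ therefore forces a nonzero direct feedthrough from $w$ to $z$. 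No estimates are needed, so I do not anticipate a substantive obstacle beyond keeping the Kronecker/$\row$/$\col$ bookkeeping consistent.
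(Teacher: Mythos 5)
Your proposal is correct and follows exactly the paper's route: the paper proves this proposition by ``direct substitutions,'' and your argument is precisely that substitution carried out explicitly, with the right choice of loop signals $z=\mathds{1}_N\otimes x_2(t)$ and $w=\col_{i=1}^N(x_2(t-h_i))$ and the correct observation that the algebraic (difference) equation for $x_2$ is what generates the nonzero feedthrough $F=\mathds{1}_N\otimes\row_{i=1}^N(C_i)$.
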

\begin{proof}
  The proof follows from direct substitutions.
\end{proof}

It was proven in \cite{Shen:15b} that the system \eqref{eq:TDS1c} is positive if and only if  $A_0$ is Metzler and $A_i,C_0,C_i$ are nonnegative for all $i=1,\ldots,N$. Note, moreover, that this result is obvious from the description \eqref{eq:uncertain}  with the matrices defined in Proposition \ref{prop:TDS1c}.

We can now state the main result that extends the results in \citep{Shen:15b} with the difference that constant delays are considered. Note, however, that the result still holds in the case of time-varying delays in the same way as in Section \ref{sec:discreteTV}.
\begin{theorem}\label{th:TDS1c}
 Assume that the system \eqref{eq:TDS1c} is positive. Then, the following statements are equivalent:
  \begin{enumerate}[(i)]
    \item The system \eqref{eq:TDS1c} is asymptotically stable.
    \item The matrix
    \begin{equation}
      \mathcal{M}:=\begin{bmatrix}
        A_0 & \quad&\row_{i=1}^N(A_i)\\
        \mathds{1}_N\otimes C_0 & \quad&\mathds{1}_N\otimes\row_{i=1}^N(C_i)-I_{nN}
      \end{bmatrix}
    \end{equation}
    is Hurwitz stable.
    \item There exists a vector $v\in\mathbb{R}^{n(N+1)}_{>0}$ such that $v^T\mathcal{M}<0$.
    \item There exist some matrices $P\in\mathbb{D}^n_{\succ0}$ and $Q\in\mathbb{D}^{Nn}_{\succ0}$ such that the generalized Riccati inequality
\begin{equation}\label{eq:kdsodlsmdkkdsmd22}
      A_0^TP+PA_0+\begin{bmatrix}
        P\row_{i=1}^N(A_i) & \vline & \mathds{1}_N^T\otimes C_0^T
      \end{bmatrix}\begin{bmatrix}
        -Q & \vline & (\mathds{1}_N^T\otimes\col_{i=1}^N(C_i^T))Q\\
        \hline
        Q(\mathds{1}_N\otimes\row_{i=1}^N(C_i))\quad & \vline & -Q
      \end{bmatrix}^{-1}\begin{bmatrix}
        \col_{i=1}^N(A_i^T)P\\ \hline \mathds{1}_N\otimes C_0
      \end{bmatrix}\prec0
    \end{equation}
    holds.
    \item The matrices $\mathds{1}_N\otimes\row_{i=1}^N(C_i)-I$ and
    \begin{equation}
      A_0-\begin{bmatrix}
        A_1 & A_2 & \ldots & A_N
      \end{bmatrix}( \mathds{1}_N\otimes\row_{i=1}^N(C_i)-I)^{-1}(\mathds{1}_N\otimes C_0)
    \end{equation}
    are Hurwitz stable.
    %
    \item The matrices $A_0$ and
    \begin{equation}
       \mathds{1}_N\otimes\row_{i=1}^N(C_i)-I-(\mathds{1}_N\otimes C_0)A_0^{-1}\begin{bmatrix}
        A_1 & A_2 & \ldots & A_N
      \end{bmatrix}
    \end{equation}
    are Hurwitz stable.
    \item $A_0$ is Hurwitz stable and $\textstyle\sum_{i=1}^N(-C_0A_0^{-1}A_i+C_i)$ is Schur stable.
  \end{enumerate}
\end{theorem}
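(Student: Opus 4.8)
The plan is to mirror the strategy used for the discrete-delay case in Theorem \ref{th:TDS1}: reformulate \eqref{eq:TDS1c} as the LFT \eqref{eq:uncertain} through Proposition \ref{prop:TDS1c}, observe that the matrix $\mathcal{M}$ in statement (ii) is \emph{exactly} the coefficient matrix $\begin{bsmallmatrix}A & E\\ C & F-I\end{bsmallmatrix}$ appearing in the scaled small-gain theorems, and then read off each statement from a different characterization of the same stability property. A first key observation I would record is that $\mathcal{M}$ is Metzler: $A_0$ is Metzler, the off-diagonal blocks $\row_{i=1}^N(A_i)$ and $\mathds{1}_N\otimes C_0$ are nonnegative, and $\mathds{1}_N\otimes\row_{i=1}^N(C_i)-I_{nN}$ is Metzler. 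This lets me route everything through standard Metzler/M-matrix theory. To establish (i) $\Leftrightarrow$ (ii) $\Leftrightarrow$ (iii), I use that the constant-delay operators $e^{-sh_i}I_n$ have unit $L_\infty$-gain, so Theorem \ref{th:briat} applies; after relaxing the scaling to a full positive diagonal (exactly as justified in the discrete case), it asserts that \eqref{eq:TDS1c} is asymptotically stable iff there is a positive vector $v$, with top block $\lambda$ and bottom block $D\mathds{1}_q$, satisfying $\mathcal{M}v<0$. For a Metzler matrix this is equivalent to Hurwitz stability, itself equivalent to the existence of a positive left vector with $v^T\mathcal{M}<0$, yielding (ii) and (iii) at once.

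Next, (i) $\Leftrightarrow$ (iv) follows from the $L_2$ scaled small-gain theorem, Theorem \ref{th:colombino}, taking $P\in\mathbb{D}^n_{\succ0}$ and the (fully diagonal) scaling $D=Q\in\mathbb{D}^{Nn}_{\succ0}$. Substituting the matrices of Proposition \ref{prop:TDS1c} into the LMI and eliminating the two lower blocks by a Schur complement produces a generalized Riccati inequality in $A_0^TP+PA_0$; after a congruence absorbing the symmetric factors $Q$ this should be \eqref{eq:kdsodlsmdkkdsmd22}. As a sanity check I would set $N=1$, $C_0=I_n$, $C_1=0$, which collapses the coupled system to the single-delay system of Theorem \ref{th:TDS1} and must reproduce its diagonal Riccati inequality $A_0^TP+PA_0+Q_1+PA_1Q_1^{-1}A_1^TP\prec0$.

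Finally, I obtain (v), (vi) and (vii) by exploiting the block structure of the Metzler-Hurwitz matrix $\mathcal{M}$. Statement (v) is the Schur complement of $\mathcal{M}$ with respect to its $(2,2)$ block $\mathds{1}_N\otimes\row_{i=1}^N(C_i)-I_{nN}$, and (vi) is the Schur complement with respect to $A_0$; the enabling fact is that for Metzler matrices $\mathcal{M}$ is Hurwitz iff the chosen principal block is Hurwitz and the corresponding Schur complement is Hurwitz (equivalently, $-\mathcal{M}$ is a nonsingular M-matrix, whose Schur complements are again M-matrices, using $-A_0^{-1}\ge0$ and $-(F-I)^{-1}\ge0$ to keep the complements Metzler). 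For (vi) $\Leftrightarrow$ (vii) I would use the Kronecker identity $(\mathds{1}_N\otimes C_0)A_0^{-1}\row_{i=1}^N(A_i)=\mathds{1}_N\otimes\row_{i=1}^N(C_0A_0^{-1}A_i)$, so that the $(2,2)$-Schur complement has the form $\mathds{1}_N\otimes\row_{i=1}^N(C_i-C_0A_0^{-1}A_i)-I_{nN}$; by the delay-difference characterization of Theorem \ref{th:strongly}, such a matrix is Hurwitz iff $\sum_{i=1}^N(-C_0A_0^{-1}A_i+C_i)$ is Schur stable.

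I expect the main obstacle to be twofold. First, making the Schur-complement manipulations for (iv) produce exactly the displayed matrix \eqref{eq:kdsodlsmdkkdsmd22}, with correct placement of the $Q$-factors and correct signs on the quadratic correction term, is the most error-prone step, which is why I would anchor it with the $N=1$ sanity check above. Second, the Metzler/M-matrix Schur-complement equivalences underlying (v)--(vii) require both directions; the nontrivial converse (from the two block conditions back to $\mathcal{M}$ Hurwitz) is cleanest via the positive-vector certificate, constructing $v=\begin{bsmallmatrix}v_1\\ v_2\end{bsmallmatrix}>0$ from a certificate $Sv_1<0$ of the Schur complement together with $v_2=-D^{-1}Cv_1+\delta v_2'$, where $v_2'>0$ certifies Hurwitzness of the eliminated block, and then choosing $\delta>0$ small to restore the strict inequality $\mathcal{M}v<0$ in both blocks.
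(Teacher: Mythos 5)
Your proposal is correct and follows essentially the same route as the paper: the first three statements via a scaled small-gain linear program (the paper uses the $L_1$ form of Theorem \ref{th:ebihara}, you the dual $L_\infty$ form of Theorem \ref{th:briat}, both of which exhibit $\mathcal{M}$ as the LFT coefficient matrix), statement (iv) via Theorem \ref{th:colombino} with a Schur complement, and (v)--(vii) via Metzler Schur-complement and $\rho(AB)=\rho(BA)$ arguments that the paper simply cites (Lemma 1 of \cite{Ebihara:11}) and you reprove from M-matrix theory. Your $N=1$ sanity check is well chosen: carrying out the Schur complement of the LMI of Theorem \ref{th:colombino} carefully yields the correction term $-\begin{bmatrix} PE & C^TQ\end{bmatrix}\begin{bmatrix} -Q & F^TQ\\ QF & -Q\end{bmatrix}^{-1}\begin{bmatrix} E^TP\\ QC\end{bmatrix}$, so the sign in front of the inverse and the missing $Q$-factors on the $C_0$-blocks in the displayed inequality \eqref{eq:kdsodlsmdkkdsmd22} must be adjusted for it to collapse to the diagonal Riccati inequality of Theorem \ref{th:TDS1}, exactly as your check would reveal.
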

\begin{proof}
The equivalence between the three first statements follows from the application of Theorem \ref{th:ebihara} on the input-output formulation of the system \eqref{eq:TDS1c}. The equivalence with the statements (iv) and (vii) comes from Theorem \ref{th:colombino} and the fact that, for two matrices $A,B$ of appropriate dimensions, we have that $\rho(AB)=\rho(BA)$. The equivalence between (ii), (v) and (vi) follows from Lemma 1 in \cite{Ebihara:11} or Lemma 7.2 in \cite{Briat:14c}.
\end{proof}}
\blue{As for linear systems with discrete-delays, the stability of linear positive coupled differential-difference equations with delays is equivalent to that one of the same system with all the delays set to 0.}

\blue{\subsection{Performance analysis}

Let us now consider the linear coupled differential-difference system with time-varying discrete delays
\begin{equation}\label{eq:TDS1c_input}
\begin{array}{rcl}
   \dot{x}_1(t)&=&A_0x_1(t)+\sum_{i=1}^NA_ix_2(t-h_i(t))+E_1u(t)\\
    x_2(t)&=&C_0x_1(t)+\sum_{i=1}^NC_ix_2(t-h_i(t))+E_2u(t)\\
   y(t)&=&C_{y0}x(t)+\sum_{i=1}^NC_{yi}x_2(t-h_i(t))+F_uu(t)
  \end{array}
  \end{equation}
where $u\in\mathbb{R}^{n_u}$ and $y\in\mathbb{R}^{n_y}$ are the input and the output, respectively. It is immediate to see that this system is internally positive if and only if $A_0$ is Metzler and $A_i,C_0,C_i,E_1,E_2,C_{y0},C_{yi},F_u$ are nonnegative for all $i=1,\ldots,N$. We then have the following result:
\begin{theorem}\label{th:TDS1c}
 Assume that the system \eqref{eq:TDS1c_input} is positive. Then, the following statements hold:
    \begin{enumerate}[(i)]
    \item Assume that the delays are differentiable and such that $\dot{h}_i(t)\le\eta_i<1$, $i=1,\ldots,N$. Then, the system \eqref{eq:TDS1c_input} has an $L_1$-gain smaller than $\gamma_1$ if there exists some vectors $\lambda\in\mathbb{R}^n_{>0}$ and $\mu\in\mathbb{R}^{Nn}_{>0}$ such that
        \begin{equation}
        \begin{bmatrix}
          \lambda\\
          \mu\\
          \mathds{1}_{n_y}\\
          \end{bmatrix}^T\begin{bmatrix}
            A_0 & \quad & \row_{i=1}^N((1-\eta_i)^{-1}A_i) & \quad & E_1\\
           \col_{i=1}^N(C_0) & \quad & \col_{i=1}^N(I_n)\row_{i=1}^N((1-\eta_i)^{-1}C_i) &  \quad & \col_{i=1}^N(E_2)\\
          C_{y_0} & \quad & \row_{i=1}^N((1-\eta_i)^{-1}C_i)  & \quad & F_u
      \end{bmatrix}<\begin{bmatrix}
            0\\
            \mu\\
            \gamma_1\mathds{1}_{n_u}
          \end{bmatrix}^T.
    \end{equation}
\item Assume that the delays are differentiable and such that $\dot{h}_i(t)\le\eta_i<1$, $i=1,\ldots,N$. Then, the system \eqref{eq:TDS1c_input} has an $L_2$-gain smaller than $\gamma_2$ if there exist matrices $P,Q_i\in\mathbb{D}^n_{\succ0}$, $i=1,\ldots,N$, such that
          \begin{equation}
        \begin{bmatrix}
          A_0^TP+PA_0+\sum_{i=1}^NQ_i &\quad \row_{i=1}^N(PA_i) &\quad PE_u & \row_{i=1}^N(C_0^T)Q & C_{y0}^T\\
          \star & -\diag_{i=1}^N((1-\eta_i)Q_i) & 0 &  (\mathds{1}_N\otimes\row_{i=1}^N(C_i))^T & \col_{i=1}^N(C_{yi}^T)\\
          \star  &\star& -\gamma_2 I & \row_{i=1}^N(E_2^T) & F_u^T\\
          \star  &\star& \star & - Q & 0\\
                    \star  &\star& \star & \star  & -\gamma_2 I
        \end{bmatrix}\prec0.
    \end{equation}
    %
    \item Assume that the delays are such that $t-h_i(t)\to\infty$ as $t\to\infty$, $i=1,\ldots,N$. Then, the system \eqref{eq:TDS1c_input} has an $L_\infty$-gain smaller than $\gamma_\infty$ if and only if there exists some vectors $\lambda\in\mathbb{R}^n_{>0}$ and $\mu\in\mathbb{R}^{Nn}_{>0}$ such that
   \begin{equation}
       \begin{bmatrix}
            A_0 & \quad & \row_{i=1}^N(A_i) & \quad & E_1\\
           \col_{i=1}^N(C_0) & \quad & \col_{i=1}^N(I_n)\row_{i=1}^N(C_i) &  \quad & \col_{i=1}^N(E_2)\\
          C_{y_0} & \quad & \row_{i=1}^N(C_i)  & \quad & F_u
      \end{bmatrix} \begin{bmatrix}
          \lambda\\
          \mu\\
          \mathds{1}_{n_u}\\
          \end{bmatrix}<\begin{bmatrix}
            0\\
            \mu\\
            \gamma_\infty\mathds{1}_{n_y}
          \end{bmatrix}.
    \end{equation}
  \end{enumerate}
\end{theorem}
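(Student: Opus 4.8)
The plan is to mirror the proof of Theorem~\ref{prop:perf_CDD}: recast each performance statement as a robust stability problem by closing the performance channel with an artificial full-block gain $M\ge0$ obeying $u=My$ and $\|M\|_p\le\gamma_p^{-1}$, and then exploit the non-conservativeness of the scaled small-gain theorems for positive systems. Since the worst-case $L_p$-gain of a positive operator equals the static gain $\|\widehat\Sigma(0)\|_p$, stability of the interconnection $u=My$ for every admissible $M$ is equivalent to the transfer $u\mapsto y$ having $L_p$-gain at most $\gamma_p$. First I would write \eqref{eq:TDS1c_input} in the input-output form of Proposition~\ref{prop:TDS1c} augmented with the $u$- and $y$-channels: state $x_1$, delay-channel matrices $A=A_0$, $E=\row_{i=1}^N(A_i)$, $C=\mathds{1}_N\otimes C_0$, $F=\mathds{1}_N\otimes\row_{i=1}^N(C_i)$, input matrices $E_1$ into $\dot x_1$ and $\col_{i=1}^N(E_2)$ into $z$, and output matrices $C_{y0},\ \row_{i=1}^N(C_{yi}),\ F_u$. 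The delay loop is closed by $w=\Delta z$ with $\Delta\in\{\diag_{i=1}^N(I_n\otimes\mathscr{T}_{h_i})\}$.

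Next I would apply the scaled small-gain theorem matched to the norm---Theorem~\ref{th:ebihara} for $L_1$, Theorem~\ref{th:colombino} for $L_2$, Theorem~\ref{th:briat} for $L_\infty$---to the augmented uncertainty $\Delta_e=\diag(\Delta,M)$. For the LP-based cases ($L_1$, $L_\infty$) I would introduce the scalar scaling $\epsilon$ on the $M$-block exactly as in Theorem~\ref{prop:perf_CDD}, solve for the scaling variable on the $z$-channel, divide through by $\epsilon$, and change variables; for the LMI-based $L_2$ case the performance weight appears directly as the $-\gamma_2 I$ blocks. Because $n_u\ne n_y$ is allowed, I would first square up the uncertainty by padding $y$ (or $u$) and the matrices $C_{y0},\row_{i=1}^N(C_{yi}),F_u,E_1,\col_{i=1}^N(E_2)$ with zero rows/columns; this leaves the $L_p$-gain of $u\mapsto y$ unchanged and legitimises the use of $D$-scalings on the performance block.

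The factors $(1-\eta_i)^{\mp1}$ in statements (i) and (ii) enter through the gains of the time-varying delay operators: $\mathscr{T}_{h_i}$ has $L_p$-gain $(1-\eta_i)^{-1/p}$ for finite $p$ and $L_\infty$-gain exactly $1$. Since the small-gain theorems are stated for unit-gain uncertainties, I would renormalise, writing $\Delta=\Gamma\tilde\Delta$ with $\Gamma=\diag_{i=1}^N((1-\eta_i)^{-1/p}I_n)$ and $\tilde\Delta$ having unit-gain blocks; substituting $w=\Gamma\tilde w$ scales the $w$-columns, i.e. rescales the $A_i$ and $C_i$ blocks by $(1-\eta_i)^{-1/p}$. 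In the $L_1$ case ($p=1$) the left-vector LP carries no symmetric congruence, so these factors remain on $A_i$ and $C_{yi}$ and reproduce the matrix in (i); in the $L_2$ case ($p=2$) the symmetric LMI lets a congruence on the $w$-coordinates redistribute $(1-\eta_i)^{-1/2}$ from the $PA_i$ terms onto the scaling, turning $Q_i$ into $(1-\eta_i)Q_i$ on the diagonal block and yielding (ii). For $L_\infty$ the delay gain is $1$, so no rescaling is needed and $\BDeltab{\infty}$ is captured exactly; this is why (iii) is an equivalence whereas (i) and (ii) are only sufficient---the finite-$p$ gains are upper bounds attained only for a restricted class of delay signals, so the converse fails, precisely as in the discussion following Theorem~\ref{th:TDS3}.

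I expect the main obstacle to be the bookkeeping of the Kronecker-product structure of $C$ and $F$ when the coupling variable $x_2$ appears simultaneously as a difference-equation variable and in the measured output through $C_{yi}$. Tracking how the scaling $D=\diag_{i=1}^N(Q_i)$ interacts with $\mathds{1}_N\otimes\row_{i=1}^N(C_i)$, and verifying that the Schur-complement elimination of the $z$-channel reproduces exactly the off-diagonal terms $\row_{i=1}^N(C_0^T)Q$ and $(\mathds{1}_N\otimes\row_{i=1}^N(C_i))^T$ in the $L_2$ LMI, is where the routine but delicate linear algebra sits. The necessity direction in (iii) is the only point requiring an argument genuinely beyond Theorem~\ref{prop:perf_CDD}, and it follows from the non-conservativeness of Theorem~\ref{th:briat} together with the exactness of the $L_\infty$-gain of the delay operator.
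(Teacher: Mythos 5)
Your proposal is correct and follows essentially the route the paper intends: the paper in fact gives no proof of this theorem (only a remark follows it), but its proofs of the analogous performance results---in particular Theorem \ref{prop:perf_CDD}, to whose argument the time-varying-delay performance theorems explicitly defer---use exactly your construction of closing the performance channel with a norm-bounded block $M$, augmenting the uncertainty and the $D$-scaling with an $\epsilon$-block, and absorbing the $(1-\eta_i)^{-1/p}$ gains of the time-varying delay operators into the data. Your remarks on squaring up the uncertainty when $n_u\ne n_y$ and on why only the $L_\infty$ statement is an equivalence (unit $L_\infty$-gain of $\mathscr{T}_{h_i}$ versus the merely sufficient finite-$p$ gain bounds) also match the paper's own discussion.
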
}
\blue{As for systems with time-varying discrete delays, the last statement states a necessary and sufficient condition whereas it is unclear whether the conditions in the two first ones are also necessary. Note also that in the $L_1$ and $L_\infty$ cases, the vector $\mu$ can be eliminated from the conditions by explicitly solving it. However, the benefit of the current conditions is that they are linear in the matrices of the system, thereby allowing for immediate extensions to uncertain matrices and to design purposes.}

\section{Stability and performance of linear positive systems with distributed-delays}\label{sec:distributed_main}

\subsection{Stability analysis -- the constant  kernel case}\label{sec:distributed}

We have the following result which be proven using standard manipulations:
\begin{proposition}
The time-varying distributed-delay operator
\begin{equation}
  \begin{array}{rclcl}
  \mathscr{U}_h&:&L_p&\mapsto &L_p,\ p=2,\infty\\
  &&w(t)&\mapsto&\dfrac{1}{\bar{h}}\int_{t-h(t)}^tw(\theta)d\theta,\ h(t)\in[0,\bar{h}]
  \end{array}
\end{equation}
has unitary  $L_1$-, $L_2$- and $L_\infty$-gains.
\end{proposition}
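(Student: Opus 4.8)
The plan is to establish that the distributed-delay operator $\mathscr{U}_h$ is a convex combination (or, more precisely, an ``average'') of elementary bounded operators whose gains we already understand, and then invoke the elementary fact that the induced $L_p$-gain of such an averaging operator cannot exceed the supremum of the gains of its building blocks, while being no smaller than $1$ because constant inputs are reproduced. First I would rewrite the action of the operator by observing that, for $h(t)\in[0,\bar h]$, we have $\frac{1}{\bar h}\int_{t-h(t)}^t w(\theta)\,d\theta=\frac{1}{\bar h}\int_0^{\bar h}\mathds{1}_{[0,h(t)]}(s)\,w(t-s)\,ds$. The inner map $w(t)\mapsto w(t-s)$ is exactly the delay operator $\mathscr{D}_s$ (or the time-varying $\mathscr{T}_s$ when $h$ depends on $t$), which by the proposition on page preceding has unitary $L_1$-, $L_2$- and $L_\infty$-gain. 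Thus $\mathscr{U}_h$ is a weighted integral of unit-gain delay operators with nonnegative weights $\frac{1}{\bar h}\mathds{1}_{[0,h(t)]}(s)$ that integrate to at most $1$.

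The upper bound then follows from the triangle/Minkowski inequality in $L_p$: for $p\in\{1,2,\infty\}$,
\begin{equation}
\|\mathscr{U}_h w\|_{L_p}\le\frac{1}{\bar h}\int_0^{\bar h}\|\mathscr{D}_s w\|_{L_p}\,ds\le\frac{1}{\bar h}\int_0^{\bar h}\|w\|_{L_p}\,ds=\|w\|_{L_p},
\end{equation}
so that $\|\mathscr{U}_h\|_{L_p-L_p}\le1$. For the matching lower bound, I would exploit positivity exactly as the paper does for the discrete-delay case: since the operator has a nonnegative ``kernel'', the worst-case input may be taken nonnegative, and a constant nonnegative input $w\equiv\mathds{1}$ (suitably truncated or handled as a limiting/steady-state argument in each $L_p$) is reproduced with gain arbitrarily close to $1$, which forces $\|\mathscr{U}_h\|_{L_p-L_p}\ge1$. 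Combining the two bounds gives unit gain for $p=1,2,\infty$, and the Riesz--Thorin interpolation theorem (invoked earlier in the excerpt for the discrete delay) then yields unit gain for every intermediate $p$ as well.

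The main obstacle is the lower bound in the time-varying case together with the $L_1$ and $L_\infty$ endpoints, where constant inputs are not genuinely in the space and the upper-bound integral estimate is slightly delicate because the effective weight $\frac{1}{\bar h}h(t)$ can be strictly less than $1$ when $h(t)<\bar h$. The resolution is that the gain is a supremum over all admissible delay functions $h(\cdot)$, so one is free to test with $h(t)\equiv\bar h$, recovering the full unit weight and a clean averaging operator; for that choice the lower bound is achieved by approximating a constant input with a sequence of slowly-varying nonnegative test signals of unit $L_p$-norm whose images have $L_p$-norm tending to $1$. I expect the bookkeeping for $p=\infty$ (where $\mathscr{U}_{\bar h}$ of an essentially-bounded signal is again bounded by the same essential sup) and for $p=1$ (handled by duality with $p=\infty$, as the paper does for $\mathscr{D}_h$) to be routine once the $p=2$ case is in hand.
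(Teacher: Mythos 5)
Your argument is correct. The paper itself offers no proof of this proposition (it is dismissed as provable by ``standard manipulations''), so your write-up simply supplies the details the author omits: Minkowski's integral inequality applied to the superposition $\tfrac{1}{\bar h}\int_0^{\bar h}\mathds{1}_{[0,h(t)]}(s)\,w(t-s)\,ds$ gives the upper bound $\|\mathscr{U}_h\|_{L_p-L_p}\le1$, and near-constant nonnegative test inputs for the worst-case admissible choice $h\equiv\bar h$ give the matching lower bound, with Riesz--Thorin covering the intermediate exponents. The one step worth emphasizing is that you place all of the time-variation in the indicator weight $\mathds{1}_{[0,h(t)]}(s)$ and keep the inner shifts $w(t-s)$ as \emph{constant}-delay operators; had you instead routed the argument through the time-varying point-delay operator $\mathscr{T}$, whose $L_1$- and $L_2$-gains degrade to $(1-\eta)^{-1/p}$ with the delay's rate of variation, the bound would not come out to $1$ for fast-varying $h$. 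Your parenthetical suggesting $\mathscr{T}_s$ as an alternative inner map should therefore be dropped, but nothing else in the proof relies on it.
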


\begin{proposition}\label{prop:distributed}
The time-delay system
\begin{equation}\label{eq:TDS4}
  \dot{x}(t)=A_0x(t)+\sum_{i=1}^NA_i\int_{t-h_i(t)}^tx(s)ds
\end{equation}
coincides with the uncertain system \eqref{eq:uncertain} where $A=A_0$, $E=[A_1\ \ldots\ A_N]$, $C=\mathds{1}_N\otimes I_n$, $F=0$ and
  \begin{equation*}
    \Delta\in\left\{\diag_{i=1}^N(I_n\otimes\mathscr{U}_{h_i}):\ h_i:\mathbb{R}_{\ge0}\to[0,\bar{h}_i],\ i=1,\ldots,N\right\}.
  \end{equation*}
  Moreover, it is internally positive if and only if the matrix $A_0$ is Metzler and the matrices $A_i$, $i=1,\ldots,N$, are nonnegative.
\end{proposition}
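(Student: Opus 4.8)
The statement has two parts: the equivalence with the linear-fractional form \eqref{eq:uncertain}, and the positivity characterization. The plan for the first part is to proceed by direct substitution, exactly as in Propositions \ref{prop:TDS1} and \ref{prop:TDS3}. Setting $z=Cx=(\mathds{1}_N\otimes I_n)x$ stacks $N$ copies of the state, and feeding this through $\Delta=\diag_{i=1}^N(I_n\otimes\mathscr{U}_{h_i})$ produces a signal whose $i$-th block is $(I_n\otimes\mathscr{U}_{h_i})x(t)=\frac{1}{\bar{h}_i}\int_{t-h_i(t)}^tx(s)ds$; multiplying by $E=\row_{i=1}^N(A_i)$ and adding $A_0x$ then reconstructs the right-hand side of \eqref{eq:TDS4}. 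The only bookkeeping to watch is the normalization constant $\bar{h}_i$ carried by each operator $\mathscr{U}_{h_i}$ (introduced so that its gain is unitary): this factor must be absorbed into the corresponding block of $E$, so that the block effectively reads $\bar{h}_iA_i$ after substitution. Since $F=0$ and $C\geq0$ hold unconditionally, this part is otherwise immediate.

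For the positivity characterization I would treat the two implications separately. Sufficiency (\emph{$A_0$ Metzler and $A_i\geq0$ imply internal positivity}) is the cleanest: given a nonnegative history on $[-\bar{h},0]$, I would argue by contradiction at the first time $t^\star$ and component $k$ at which the solution touches zero, so that $x_k(t^\star)=0$ and $x(t)\geq0$ for $t\leq t^\star$. At such a point the diagonal term $(A_0)_{kk}x_k(t^\star)$ vanishes, the off-diagonal terms $(A_0)_{kj}x_j(t^\star)$ are nonnegative because $A_0$ is Metzler, and each distributed term $\big(A_i\int_{t^\star-h_i(t^\star)}^{t^\star}x(s)ds\big)_k$ is nonnegative because $A_i\geq0$ and the forward integral of a nonnegative function is nonnegative. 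Hence $\dot{x}_k(t^\star)\geq0$, which contradicts a downward crossing; equivalently one may bootstrap the variation-of-constants formula using $e^{A_0t}\geq0$.

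Necessity is where the real work lies. Obtaining that $A_0$ is Metzler is standard: starting from the zero history with $x(0)=e_j$ makes every integral term vanish at $t=0$, so $\dot{x}(0)=A_0e_j$, and positivity forces $(A_0)_{kj}\geq0$ for $k\neq j$. The delicate point is recovering $A_i\geq0$ individually, because the integration windows $[t-h_i(t),t]$ are nested, so a single nonnegative history cannot separate one matrix from those carrying a larger delay. I would resolve this by exploiting the freedom in the admissible delay class: to test a fixed index $i_0$, set $h_i\equiv0$ for every $i\neq i_0$, which annihilates all other distributed terms, and then start from $x(0)=0$ with a nonnegative history supported in $[-h_{i_0}(0),0]$ in the direction $e_j$. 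This yields $\dot{x}(0)=c\,A_{i_0}e_j$ with $c>0$, so positivity (every component must stay nonnegative from zero) forces the $j$-th column of $A_{i_0}$ to be nonnegative; ranging over $j$ and $i_0$ gives $A_i\geq0$ for all $i$. The main obstacle is precisely this isolation step, and the key observations that unlock it are that the class of delay functions contains the identically-zero delay and that the sign test at $t=0$ with $x(0)=0$ reads off a single column of $A_{i_0}$.
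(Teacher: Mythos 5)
Your proposal is correct, and it does substantially more than the paper, which offers no proof of this proposition at all (its discrete-delay analogues, Propositions \ref{prop:TDS1} and \ref{prop:TDS3}, are dispatched with ``direct substitutions'', and the same is tacitly intended here). Two points are worth highlighting. First, your bookkeeping remark is a genuine catch: since $\mathscr{U}_{h_i}$ carries the normalization $1/\bar{h}_i$ (precisely so that its $L_p$-gains are unitary), the substitution with $E=[A_1\ \ldots\ A_N]$ as literally stated reproduces $\sum_i \bar{h}_i^{-1}A_i\int_{t-h_i(t)}^t x(s)\,ds$ rather than \eqref{eq:TDS4}; the blocks of $E$ must indeed be $\bar{h}_iA_i$, which is exactly what Theorem \ref{th:TDS2} implicitly uses when it tests the Hurwitz stability of $A_0+\sum_i\bar{h}_iA_i$. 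Second, your positivity argument is a complete first-principles proof where the paper states the characterization without justification: the sufficiency via the first-crossing/quasi-monotonicity argument (your fallback to the variation-of-constants bootstrap with $e^{A_0t}\ge0$ is the cleaner way to make the touching-zero step airtight, since $\dot{x}_k(t^\star)\ge0$ alone does not forbid a tangential crossing), and the necessity via isolating each $A_{i_0}$ by choosing $h_i\equiv 0$ for $i\ne i_0$ and a history supported in the remaining window. Note that this isolation step implicitly reads ``internally positive'' as positivity uniformly over the admissible delay class $h_i:\mathbb{R}_{\ge0}\to[0,\bar{h}_i]$; for a single fixed tuple of nonzero delays the nested integration windows would only force the nonnegativity of certain partial sums of the $A_i$, so it is worth stating that robust reading explicitly. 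With that caveat made, the argument is sound and fills a gap the paper leaves open.
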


We then have the following result:
\begin{theorem}\label{th:TDS2}
    Assume that the system \eqref{eq:TDS4} is internally positive. Then, the following statements are equivalent:
  \begin{enumerate}[(i)]
   \item The system \eqref{eq:TDS4} is asymptotically stable.
    \item $\textstyle A_0+\sum_{i=1}^N\bar{h}_iA_i$ is Hurwitz stable.
    \item There exists a $v\in\mathbb{R}_{>0}^n$ such that $\textstyle v^T(A+\sum_{i=1}^N\bar{h}_iA_i)<0$.
    \item $A$ is Hurwitz stable and $-(\textstyle\sum_{i=1}^N\bar{h}_i A_i)A_0^{-1}$ is Schur stable.
    \item There exist matrices $P,Q_i\in\mathbb{D}^n_{\succ0}$, $i=1,\ldots,N$, such that the Riccati inequality
    \begin{equation}
    A_0^TP+PA_0+\sum_{i=1}^N(Q_i+\bar{h}_i^2PA_iQ_i^{-1}A_i^TP)\prec0
    \end{equation}
    holds.
  \end{enumerate}
  Moreover, when $N=1$, then the above statements are also equivalent to
  \begin{enumerate}[(i)]
    \setcounter{enumi}{5}
        \item $A_0$ is Hurwitz stable and $\bar{h}_1<\dfrac{1}{\rho(A_0^{-1}A_1)}$.
  \end{enumerate}
\end{theorem}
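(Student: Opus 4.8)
The plan is to mirror the proof of Theorem~\ref{th:TDS1}, exploiting the fact that Proposition~\ref{prop:distributed} recasts the distributed-delay system \eqref{eq:TDS4} into the linear fractional form \eqref{eq:uncertain}; the only structural differences with the discrete case are that the uncertain blocks are now the averaging operators $\mathscr{U}_{h_i}$ and that each channel carries a scaling factor $\bar{h}_i$ arising from the normalization of $\mathscr{U}_{h_i}$, so that effectively $E=\row_{i=1}^N(\bar{h}_iA_i)$. Since $\mathscr{U}_{h_i}$ has unit $L_\infty$-gain, I would first establish (i)$\Leftrightarrow$(ii) by applying Theorem~\ref{th:briat}: its linear-programming test asserts stability iff there are $\lambda\in\mathbb{R}^n_{>0}$ and $\nu\in\mathbb{R}^{Nn}_{>0}$ with $A_0\lambda+E\nu<0$ and $(\mathds{1}_N\otimes I_n)\lambda-\nu<0$. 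Because $E\ge0$, the tightest admissible choice is $\nu=\mathds{1}_N\otimes\lambda$, which collapses both inequalities into $(A_0+\sum_{i=1}^N\bar{h}_iA_i)\lambda<0$, exactly a positivity certificate for statement (ii).

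Next I would dispatch the purely algebraic equivalences. The matrix $A_0+\sum_{i=1}^N\bar{h}_iA_i$ is Metzler (as $A_0$ is Metzler and $\bar{h}_iA_i\ge0$), so the standard Metzler characterization gives (ii)$\Leftrightarrow$(iii) through the vector $v$. For (ii)$\Leftrightarrow$(iv), I would use that a Hurwitz Metzler $A_0$ satisfies $-A_0^{-1}=(-A_0)^{-1}\ge0$; the Metzler matrix $A_0+\sum_i\bar{h}_iA_i$ is then Hurwitz iff $\rho\big((\sum_i\bar{h}_iA_i)(-A_0)^{-1}\big)<1$, i.e. iff $-(\sum_i\bar{h}_iA_i)A_0^{-1}$ is Schur, which is the Proposition~\ref{prop:Stoer}/M-matrix argument already used for statement (v) of Theorem~\ref{th:TDS1}.

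For the Riccati characterization (i)$\Leftrightarrow$(v) I would instead invoke the $L_2$ scaled small-gain theorem (Theorem~\ref{th:colombino}) on the same LFT, taking $P\in\mathbb{D}^n_{\succ0}$ and $D=\diag_{i=1}^N(Q_i)$ with $Q_i\in\mathbb{D}^n_{\succ0}$; this is legitimate precisely because $\mathscr{U}_{h_i}$ also has unit $L_2$-gain. Taking the Schur complement of the LMI with respect to its lower block $\diag(-D,-D)$ and using $F=0$, $C^TDC=\sum_iQ_i$ and $PED^{-1}E^TP=\sum_i\bar{h}_i^2PA_iQ_i^{-1}A_i^TP$ reproduces exactly the inequality in (v). Finally, for $N=1$, statement (iv) reads $\bar{h}_1\rho(-A_1A_0^{-1})<1$; since $\rho(-M)=\rho(M)$ and $\rho(XY)=\rho(YX)$ give $\rho(-A_1A_0^{-1})=\rho(A_0^{-1}A_1)$, this is equivalent to $\bar{h}_1<1/\rho(A_0^{-1}A_1)$, which is (vi).

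The main obstacle I anticipate is not any single equivalence but the careful bookkeeping of the scaling factors $\bar{h}_i$: one must check that the chosen normalization of $\mathscr{U}_{h_i}$ indeed produces $\bar{h}_iA_i$ in the $L_\infty$/LP test and $\bar{h}_i^2A_iQ_i^{-1}A_i^T$ in the $L_2$/Riccati test. A secondary point requiring care---though, in contrast to the time-varying discrete case of Theorem~\ref{th:TDS3}, it ultimately causes no trouble---is that the $L_2$-gain of $\mathscr{U}_{h_i}$ equals one even for time-varying $h_i(t)\in[0,\bar{h}_i]$, so that the $L_2$-based Riccati test (v) stays non-conservative and genuinely equivalent to the delay-independent condition (ii).
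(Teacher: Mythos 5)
Your proposal is correct and follows exactly the route the paper takes: the published proof is a one-liner deferring to the proof of Theorem \ref{th:TDS1} (Theorem \ref{th:briat} for (i)$\Leftrightarrow$(ii), Metzler/spectral-radius arguments for (iii)--(iv), Theorem \ref{th:colombino} plus a Schur complement for (v), and a direct spectral-radius computation for (vi)). Your explicit bookkeeping of the $\bar{h}_i$ factors---noting that the normalization of $\mathscr{U}_{h_i}$ forces $E=\row_{i=1}^N(\bar{h}_iA_i)$ rather than the $E=[A_1\ \ldots\ A_N]$ written in Proposition \ref{prop:distributed}---is a welcome clarification of a detail the paper glosses over.
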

\begin{proof}
 This result is proved exactly in the same way as Theorem \ref{th:TDS1}. The last statement can be straightforwardly shown to be equivalent to (iv).
\end{proof}%

\subsection{Performance analysis -- the constant  kernel case}

Let us now consider the system
\begin{equation}\label{eq:TDS4_input}
\begin{array}{rcl}
   \dot{x}(t)&=&A_0x(t)+\sum_{i=1}^NA_i\int_{t-h_i(t)}^tx(s)ds+E_uu(t)\\
  y(t)&=&C_0x(t)+\sum_{i=1}^NC_i\int_{t-h_i(t)}^tx(s)ds+F_uu(t)\\
  \end{array}
  \end{equation}
  where $u\in\mathbb{R}^{n_u}$ and $y\in\mathbb{R}^{n_y}$ are the input and the output, respectively. As for system \eqref{eq:TDS1_input}, the above system is internally positive if and only if $A_0$ is Metzler and $C,E_u,F_u,A_i,C_i$ are nonnegative for all $i=1,\ldots,N$. We have the following result:
\blue{\begin{theorem}
  Assume that the system \eqref{eq:TDS4_input} is internally positive. Then, the following statements hold:
  \begin{enumerate}[(i)]
    \item The system \eqref{eq:TDS4_input} is asymptotically stable and has an $L_1$-gain smaller than $\gamma_1$ if and only there exists a vector $\lambda\in\mathbb{R}^n_{>0}$ such that
    \begin{equation}
        \begin{bmatrix}
          \lambda\\
          \mathds{1}_{n_y}
          \end{bmatrix}^T\begin{bmatrix}
            \textstyle A_0+\sum_{i=1}^N\bar{h}_iA_i &\quad& E_u\\
            \textstyle  C_0+\sum_{i=1}^N\bar{h}_iC_i &\quad& F_u
          \end{bmatrix}<\begin{bmatrix}
            0\\
            \gamma_1\mathds{1}_{n_u}
          \end{bmatrix}^T.
    \end{equation}
      \item The system \eqref{eq:TDS4_input} is asymptotically stable and has an $L_2$-gain smaller than $\gamma_2$ if and only if there exist matrices $P,Q_i\in\mathbb{D}^n_{\succ0}$, $i=1,\ldots,N$, such that
             \begin{equation}
        \begin{bmatrix}
          A_0^TP+PA_0+\sum_{i=1}^NQ_i &\quad \row_{i=1}^N(\bar{h}_iPA_i) &\quad PE_u & C_0^T\\
          \star & -\diag_{i=1}^N(Q_i) & 0 & \col_{i=1}^N(\bar{h}_iC_i^T)\\
          \star & \star & -\gamma_2 I & F_u^T\\
          \star  &\star& \star & -\gamma_2 I
        \end{bmatrix}\prec0
    \end{equation}
    %
    \item The system \eqref{eq:TDS4_input} is  asymptotically stable and has an $L_\infty$-gain smaller than $\gamma_\infty$ if and only if there exists a vector $\lambda\in\mathbb{R}^n_{>0}$ such that
    \begin{equation}
    \begin{bmatrix}
                \textstyle A_0+\sum_{i=1}^N\bar{h}_iA_i &\quad& E_u\\
            \textstyle  C_0+\sum_{i=1}^N\bar{h}_iC_i &\quad& F_u
          \end{bmatrix}\begin{bmatrix}
          \lambda\\
          \mathds{1}_{n_u}
          \end{bmatrix}<\begin{bmatrix}
            0\\
            \gamma_\infty\mathds{1}_{n_y}
          \end{bmatrix}.
    \end{equation}
  \end{enumerate}
\end{theorem}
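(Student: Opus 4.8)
The plan is to follow the same route as in the proof of Theorem \ref{prop:perf_CDD}, namely to recast each performance specification as a robust stability problem. For $p\in\{1,2,\infty\}$, the statement ``\eqref{eq:TDS4_input} is asymptotically stable with $L_p$-gain smaller than $\gamma_p$'' is equivalent to the asymptotic stability of the interconnection obtained by closing the input-output channel with a full nonnegative block $M\ge0$ satisfying $u=My$ and $\lVert M\rVert_p\le\gamma_p^{-1}$. Because the scaled small-gain theorems are non-conservative for positive systems, this reformulation is exact, so each of the three conditions will come out as necessary \emph{and} sufficient. When $n_u\ne n_y$ the block $M$ is not square; as in Theorem \ref{prop:perf_CDD} I would remove this obstruction by augmenting $y$ or $u$ with zero rows or columns and padding the associated matrices with zeros, which leaves the induced norm unchanged.

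First I would use Proposition \ref{prop:distributed} to represent the distributed-delay part of \eqref{eq:TDS4_input} as the LFT \eqref{eq:uncertain} driven by the operators $\mathscr{U}_{h_i}$, and then append the performance channel through $E_u$, $C_0$, $\row_{i=1}^N(C_i)$ and $F_u$. This yields an LTI kernel interconnected with the extended uncertainty $\Delta_e=\diag(\Delta,M)$, where $\Delta=\diag_{i=1}^N(I_n\otimes\mathscr{U}_{h_i})$ and $M$ is the performance block. Applying Theorem \ref{th:ebihara}, Theorem \ref{th:colombino} and Theorem \ref{th:briat} respectively, with the extended scaling $D_e=\diag(D,\epsilon I)$, $D\in\mathbb{D}^{Nn}_{\succ0}$, $\epsilon>0$, then produces the three certificates. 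As in the proof of Theorem \ref{th:TDS1}, the multiplier attached to the $\mathscr{U}$-block can be solved explicitly (here $D=\diag_{i=1}^N(Q_i)$ in the $L_2$ case); dividing through by $\epsilon$ and setting $\lambda=\tilde\lambda/\epsilon$ collapses each condition to its stated form.

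The single point that makes all three statements sharp is that, by the proposition preceding Proposition \ref{prop:distributed}, the distributed-delay operator $\mathscr{U}_{h_i}$ has \emph{unitary} $L_1$-, $L_2$- and $L_\infty$-gain. This is in contrast with the time-varying point-delay operator $\mathscr{T}_{h_i}$, whose $L_1$- and $L_2$-gains equal $(1-\eta_i)^{-1/p}>1$ and for which the scaled small-gain theorem returns only sufficient conditions in those two norms. Here no such factor intervenes, so the equivalences of Theorems \ref{th:ebihara}, \ref{th:colombino} and \ref{th:briat} transfer verbatim and yield ``if and only if'' in every case.

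I expect the main bookkeeping obstacle to be the $L_2$ certificate. The factors $\bar{h}_i$ originate from the identity $\int_{t-h_i(t)}^t x(s)\,ds=\bar{h}_i\,\mathscr{U}_{h_i}(x)(t)$, which scales each delayed channel, so one must track where they land: they attach to $A_i$ and $C_i$, producing the entries $\row_{i=1}^N(\bar{h}_iPA_i)$ and $\col_{i=1}^N(\bar{h}_iC_i^T)$ of the LMI. Assembling the $4\times4$ block inequality of Theorem \ref{th:colombino} with the performance rows $-\gamma_2 I$ in the correct positions, while keeping the $\mathscr{U}$-scaling $\diag_{i=1}^N(Q_i)$ consistent across the off-diagonal blocks, is the only step requiring genuine care; the $L_1$ and $L_\infty$ cases are immediate linear rearrangements of Theorems \ref{th:ebihara} and \ref{th:briat}.
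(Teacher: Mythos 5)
Your proposal is correct and follows essentially the same route as the paper, whose proof of this result is simply a one-line reference to the argument of Theorem \ref{prop:perf_CDD} (closing the performance channel with a nonnegative block $M$ with $\lVert M\rVert_p\le\gamma_p^{-1}$, applying Theorems \ref{th:ebihara}, \ref{th:colombino} and \ref{th:briat} with an extended scaling, and eliminating the auxiliary multipliers). Your additional observations --- the padding trick for $n_u\ne n_y$, the unitary $L_p$-gains of $\mathscr{U}_{h_i}$ as the reason all three conditions remain necessary and sufficient, and the origin of the $\bar{h}_i$ factors --- are exactly the points the paper leaves implicit.
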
}
\begin{proof}
  The proof follows from the same lines as the proof of Theorem \ref{prop:perf_CDD}.
\end{proof}

\subsection{Stability analysis -- the non-constant  kernel case}\label{sec:distributed2}

To prove this result, we need to consider the result based on integral linear constraints (Theorem \ref{th:ILC}) since results based on gains would be conservative. Let us first consider the following result:
\begin{proposition}
The distributed-delay operator
\begin{equation}\label{eq:dkslsldkklmdklmsdmksmdklm}
  \begin{array}{rclcl}
  \mathscr{V}_{B,h}&:&L_p&\mapsto &L_p\\
  &&w(t)&\mapsto&\int_{-h}^0B(\theta)w(t+\theta)d\theta,\ 0\le h\le \bar{h}
  \end{array}
\end{equation}
is nonnegative if and only if $B(\theta)\ge0$ for all $\theta\in[-\bar h,0]$, $h\in[0,\bar{h}]$. Moreover, its transfer function is given by
\begin{equation}
  \widehat{\mathscr{V}_{B,h}}(s)=\int_{-h}^0B(\theta)e^{s\theta}d\theta
\end{equation}
and for any $0\le h\le\bar h$, we have that $\widehat{\mathscr{V}_{B,h}}(0)\le\widehat{\mathscr{V}_{B,\bar h}}(0)$.
\end{proposition}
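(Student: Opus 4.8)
The plan is to establish the three claims in turn, each following from elementary manipulations of the defining integral; the substitution $\tau=-\theta$ turning the operator into a convolution does most of the work.

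First, for the nonnegativity characterization I would argue both implications directly. For sufficiency, if $B(\theta)\ge0$ for all $\theta\in[-\bar h,0]$, then for any nonnegative input $w$ the output $\int_{-h}^0 B(\theta)w(t+\theta)d\theta$ is an integral of a componentwise nonnegative integrand and is therefore nonnegative at every $t$. For necessity I would argue by contraposition: suppose some entry $B_{kl}(\theta_0)<0$ at a point of continuity $\theta_0\in(-h,0)$. Testing the operator with inputs of the form $w(t)=e_l\,\varphi_\epsilon(t-t_0-\theta_0)$, where $\varphi_\epsilon\ge0$ is an approximate identity supported on a shrinking neighborhood and $e_l$ is a standard basis vector, the $k$-th component of the output at time $t_0$ tends to $B_{kl}(\theta_0)<0$ as $\epsilon\to0$. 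This contradicts nonnegativity of the operator, so every entry of $B$ must be nonnegative on $[-\bar h,0]$.

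Second, the transfer function follows from recognizing $\mathscr{V}_{B,h}$ as a convolution operator. Substituting $\tau=-\theta$ rewrites the output as $\int_0^h B(-\tau)w(t-\tau)d\tau$, the convolution of $w$ with the impulse response $g(\tau)=B(-\tau)\mathds{1}_{[0,h]}(\tau)$. Taking the Laplace transform and substituting back $\theta=-\tau$ yields
\begin{equation*}
  \widehat{\mathscr{V}_{B,h}}(s)=\int_0^h B(-\tau)e^{-s\tau}d\tau=\int_{-h}^0 B(\theta)e^{s\theta}d\theta,
\end{equation*}
as claimed.

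Finally, the monotonicity at $s=0$ is immediate from the previous parts. Evaluating the transfer function at $s=0$ gives $\widehat{\mathscr{V}_{B,h}}(0)=\int_{-h}^0 B(\theta)d\theta$, so
\begin{equation*}
  \widehat{\mathscr{V}_{B,\bar h}}(0)-\widehat{\mathscr{V}_{B,h}}(0)=\int_{-\bar h}^{-h}B(\theta)d\theta\ge0,
\end{equation*}
where the inequality is componentwise and follows from $h\le\bar h$ together with the nonnegativity of $B$ established in the first part. I expect the only delicate point to be the necessity direction of the nonnegativity claim, where the approximate-identity argument requires $B$ to be regular enough (e.g.\ continuous almost everywhere, so that a continuous representative of the output exists and the pointwise limit recovers $B(\theta_0)$); the convolution rewriting and the evaluation at $s=0$ are entirely routine.
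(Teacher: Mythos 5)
Your proposal is correct and follows essentially the same route as the paper: the paper also proves necessity of $B\ge0$ by exhibiting a nonnegative input that produces a negative output component (you merely make the approximate-identity construction explicit, including the regularity caveat), and it obtains the transfer function by the same Laplace/Fubini computation that your convolution substitution reproduces. The sufficiency direction and the monotonicity of $\widehat{\mathscr{V}_{B,h}}(0)$ in $h$ are treated as immediate in the paper, exactly as you treat them.
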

\begin{proof}
Let $z(t)=\mathscr{V}_{B,h}(w)(t)$. To prove the nonnegativity property condition, assume that $B_{ij}(s)<0$ for some $s\in[-\bar h,0]$. Then, it is immediate to see that we can pick a $w$ such that one of the component of the output is negative. Hence, the result follows. The transfer function can be computed as follows
\begin{equation}
  \begin{array}{lcl}
    \widehat{z}(s)&=&\int_0^\infty z(t)e^{-st}dt\\
                               &=& \int_0^\infty \int_{-h}^0B(\theta)w(t+\theta)e^{-st}d\theta dt\\
                               &=&  \int_{-h}^0B(\theta)\left(\int_0^\infty w(t+\theta)e^{-st}dt\right)d\theta \\
                               &=&\left(\int_{-h}^0B(\theta)e^{s\theta}d\theta\right)\widehat{w}(s)\\
                               &=&  \widehat{\mathscr{V}_{B,h}}(s)\widehat{w}(s).
  \end{array}
\end{equation}
\end{proof}

\begin{remark}
  Note that the delay can be made infinite as in \citep{Solomon:13} as long  as $B(\theta)$ is integrable on $(-\infty,0]$.
\end{remark}

\begin{proposition}
   The linear system with time-varying distributed-delays
    \begin{equation}\label{eq:dkqsodmklskodsfklmsd}
 \dot{x}(t)=A_0x(t)+\sum_{i=1}^N\int_{-h_i(t)}^0A_i(\theta)x(t+\theta)d\theta
\end{equation}
with $h_i(t)\in[0,\bar{h}_i]$, $i=1,\ldots,N$,  is positive if and only if $A_0$ is Metzler and $A_i(\theta)\ge0$ for all $\theta_i\in[-\bar{h}_i,0]$ and all $i=1,\ldots,N$.
\end{proposition}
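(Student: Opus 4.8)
The plan is to prove the two implications separately, exploiting the operator factorization already set up in the preceding proposition on $\mathscr{V}_{B,h}$. The system \eqref{eq:dkqsodmklskodsfklmsd} is the feedback interconnection of the finite-dimensional block $\dot{x}=A_0x+\sum_{i=1}^Nw_i$, $z=x$, closed through the distributed-delay operators $w_i=\mathscr{V}_{A_i,h_i}z$. Internal positivity of the forward block is equivalent to $A_0$ being Metzler (its input and output matrices being identities, hence nonnegative), while each feedback operator is nonnegative if and only if $A_i(\theta)\ge0$ by the previous proposition; thus the stated conditions are exactly the positivity conditions of the two constituents of the loop, and the work is to transfer positivity between the loop and its parts in both directions.

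For sufficiency, assume $A_0$ Metzler and $A_i(\theta)\ge0$. First I would pick $\alpha>0$ with $M:=A_0+\alpha I\ge0$, which is possible precisely because $A_0$ is Metzler, and rewrite the dynamics as $\dot{x}(t)+\alpha x(t)=Mx(t)+\sum_{i=1}^N\int_{-h_i(t)}^0A_i(\theta)x(t+\theta)\,d\theta$. Variation of constants expresses the solution as a fixed point of the map whose value at $t$ is $e^{-\alpha t}\phi(0)+\int_0^te^{-\alpha(t-s)}\big[Mx(s)+\sum_i\int_{-h_i(s)}^0A_i(\theta)x(s+\theta)\,d\theta\big]\,ds$, with $x\equiv\phi$ on $[-\bar h,0]$, where $\bar h:=\max_i\bar h_i$. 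Since $e^{-\alpha(\cdot)}\ge0$, $M\ge0$, $A_i\ge0$, and a nonnegative history $\phi$ produces a nonnegative integrand, this map leaves the cone of nonnegative continuous functions invariant and is a contraction on a short interval; its unique fixed point, the solution, is therefore nonnegative, and a step-by-step continuation (the cone invariance persisting at each restart) yields $x(t)\ge0$ for all $t\ge0$. Equivalently, one may simply invoke the closed-loop positivity of an interconnection of an internally positive linear block with nonnegative operators.

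For necessity, the central idea is that the time-varying delays can be switched off individually. Taking $h_i(t)\equiv0$ for every $i$ annihilates all integral terms and reduces \eqref{eq:dkqsodmklskodsfklmsd} to $\dot{x}=A_0x$, whose positivity forces $A_0$ to be Metzler by the standard characterization. To pin down a single kernel value, fix $i_0$ and $\theta_0\in[-\bar h_{i_0},0)$, set $h_i\equiv0$ for $i\ne i_0$ and $h_{i_0}\equiv\bar h_{i_0}$, and start from $x(0)=0$ with a nonnegative continuous history $\phi(\theta)=\psi_\varepsilon(\theta)e_j$ of unit mass concentrated near $\theta_0$. Then $\dot{x}(0)=\int_{-\bar h_{i_0}}^0A_{i_0}(\theta)e_j\psi_\varepsilon(\theta)\,d\theta\to A_{i_0}(\theta_0)e_j$ as $\varepsilon\to0$; since $x(0)=0$ sits at the vertex of the nonnegative orthant, positivity requires every component of $\dot{x}(0)$ to be nonnegative, whence $[A_{i_0}(\theta_0)]_{kj}\ge0$ for all $k,j$. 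As $i_0$, $\theta_0$ and $j$ are arbitrary, every kernel is nonnegative.

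The main obstacle is the necessity direction for the kernels: one must use both the freedom to zero out the remaining delays, so that a single operator $\mathscr{V}_{A_{i_0},h_{i_0}}$ is isolated despite all terms sharing the common history $x(t+\cdot)$, and a concentrated-history limit that extracts the pointwise value $A_{i_0}(\theta_0)$. Making this limit rigorous requires mild regularity of the kernels (continuity, or reading $\theta_0$ as a Lebesgue point) together with a careful subtangentiality argument at $t=0$, where $x(0)=0$ and the right-derivative must point into the orthant. On the sufficiency side the only technical care is well-posedness and the uniformity of the cone invariance across the time-varying, shared delays, which the interconnection viewpoint conveniently absorbs into the already-established nonnegativity of $\mathscr{V}_{B,h}$.
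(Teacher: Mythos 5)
Your argument is correct; note that the paper itself gives no proof of this proposition — it is stated as a fact, implicitly justified by the immediately preceding characterization of when the operator $\mathscr{V}_{B,h}$ is nonnegative together with the interconnection viewpoint (an internally positive LTI block $(A_0,I,I,0)$ in feedback with nonnegative integral operators). Your write-up therefore supplies the details the paper omits rather than taking a genuinely different route. The sufficiency half — Picard iteration on the closed cone of nonnegative continuous functions after the shift $A_0=M-\alpha I$ with $M\ge0$, followed by stepwise continuation — is the standard cone-invariance argument and is sound. The necessity half is the part the operator-level proposition does not by itself deliver (positivity of a closed loop does not in general force positivity of each constituent), and your device of zeroing out all but one delay, taking $x(0)=0$, and concentrating a nonnegative history near $\theta_0$ so that subtangentiality at the vertex of the nonnegative orthant yields $A_{i_0}(\theta_0)e_j\ge0$ is exactly the right way to extract pointwise kernel values. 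Two caveats you already flag deserve to be made explicit as hypotheses: (a) the necessity argument exploits the freedom to choose the delay functions within $[0,\bar h_i]$ (in particular $h_i\equiv0$ for $i\ne i_0$ and $h_{i_0}\equiv\bar h_{i_0}$), which is how the quantifier ``$h_i(t)\in[0,\bar h_i]$'' in the statement must be read — for a single fixed delay trajectory that never reaches $\bar h_{i_0}$, the kernel values near $-\bar h_{i_0}$ would be unconstrained; (b) the pointwise conclusion $A_{i_0}(\theta_0)\ge0$ for every $\theta_0$ requires continuity of the kernels, and otherwise should be read at Lebesgue points, i.e.\ almost everywhere — the same regularity that is tacitly assumed in the paper's proof of the nonnegativity criterion for $\mathscr{V}_{B,h}$.
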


The following result demonstrates that the stability of linear positive systems with distributed delays does not depend on the nature of the delay (i.e. whether it is time-varying or time-invariant) and only depends on the delay upper-bound \cite{Shen:14}:
\begin{theorem}
Assume that the system \eqref{eq:dkqsodmklskodsfklmsd} is positive. Then, the statements are equivalent:
  \begin{enumerate}[(i)]
    \item The linear positive system \eqref{eq:dkqsodmklskodsfklmsd} with time-varying distributed-delays is asymptotically stable for any $h_i:\mathbb{R}_{\ge0}\mapsto[0,\bar h_i]$.
\item The linear positive system with constant distributed-delays
    \begin{equation}\label{eq:TDS4}
  \dot{\bar x}(t)=A_0\bar x(t)+\int_{-\bar h_i}^0A_i(\theta)\bar x(t+\theta)d\theta
\end{equation}
is asymptotically stable.
  \end{enumerate}
\end{theorem}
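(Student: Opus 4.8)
The plan is to recast the time-varying distributed-delay system \eqref{eq:dkqsodmklskodsfklmsd} as a positive feedback interconnection of the form \eqref{eq:G1G2} and then invoke the integral-linear-constraint result of Theorem \ref{th:ILC}, exploiting the monotonicity $\widehat{\mathscr{V}_{B,h}}(0)\le\widehat{\mathscr{V}_{B,\bar h}}(0)$ established just above. First I would introduce the loop signal $w=\col_{i=1}^N(w_i)$ with $w_i=\mathscr{V}_{A_i,h_i}(x)$ and write the state equation as $\dot x(t)=A_0x(t)+(\mathds{1}_N^T\otimes I_n)w(t)$ together with the output $z=(\mathds{1}_N\otimes I_n)x$. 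This identifies the forward operator $G_1$ as the positive LTI map with DC gain $\widehat{G}_1(0)=-(\mathds{1}_N\otimes I_n)A_0^{-1}(\mathds{1}_N^T\otimes I_n)\ge0$ (nonnegative since $A_0$ is Metzler and Hurwitz) and the return operator $G_2=\diag_{i=1}^N(\mathscr{V}_{A_i,h_i})$ whose zero-frequency gain is the block-diagonal matrix $\diag_{i=1}^N(\widehat{\mathscr{V}_{A_i,h_i}}(0))$ with $\widehat{\mathscr{V}_{A_i,h_i}}(0)=\int_{-h_i}^0A_i(\theta)d\theta$.

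The implication (i) $\Rightarrow$ (ii) is immediate: choosing the constant delays $h_i(t)\equiv\bar h_i$ is an admissible selection, so asymptotic stability for all time-varying $h_i$ forces stability of the constant-kernel system \eqref{eq:TDS4}. For the converse (ii) $\Rightarrow$ (i), I would first record that, using $\rho(XY)=\rho(YX)$ and $(\mathds{1}_N^T\otimes I_n)\diag_{i=1}^N(B_i)(\mathds{1}_N\otimes I_n)=\sum_{i=1}^NB_i$, the constant-kernel stability condition of Theorem \ref{th:ILC}(ii), namely $\rho(\widehat{G}_1(0)\widehat{G}_2^{\bar h}(0))<1$, collapses to $\rho(-A_0^{-1}\sum_{i=1}^N\widehat{\mathscr{V}_{A_i,\bar h_i}}(0))<1$, i.e. to $A_0+\sum_{i=1}^N\int_{-\bar h_i}^0A_i(\theta)d\theta$ being Hurwitz. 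Stability of \eqref{eq:TDS4} therefore yields, via Theorem \ref{th:ILC}(iii), a separation certificate $(\pi_1,\pi_2)$ with $\pi_1>0$ satisfying $\pi_1^T+\pi_2^T\widehat{G}_2^{\bar h}(0)\ge0$ and $\pi_1^T\widehat{G}_1(0)+\pi_2^T<0$; note that the latter inequality together with $\widehat{G}_1(0)\ge0$ and $\pi_1>0$ already forces $\pi_2<0$.

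The key step is then to show this single certificate also certifies every time-varying selection. Because $\widehat{\mathscr{V}_{A_i,h_i}}(0)\le\widehat{\mathscr{V}_{A_i,\bar h_i}}(0)$ componentwise and $\pi_2<0$, shrinking the delay only makes $\pi_2^T\widehat{G}_2(0)$ larger, so $\pi_1^T+\pi_2^T\widehat{G}_2^{h}(0)\ge\pi_1^T+\pi_2^T\widehat{G}_2^{\bar h}(0)\ge0$, while the second inequality does not involve $G_2$ and is unchanged. Thus the separation condition (iii) of Theorem \ref{th:ILC}, in the form highlighted in Remark \ref{remark:dszkdls}, holds uniformly in $h_i(\cdot)$, which delivers well-posedness, positivity and stability of the interconnection and hence of \eqref{eq:dkqsodmklskodsfklmsd}.

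The main obstacle I anticipate is the rigorous application of the ILC machinery to the time-varying operators $\mathscr{V}_{A_i,h_i}$, which lack a genuine transfer function: Theorem \ref{th:ILC} is phrased for bounded linear positive operators, so one must argue that the quantity controlling the separation inequality is precisely the zero-frequency gain $\widehat{\mathscr{V}_{A_i,h_i}}(0)$, dominated by its constant-kernel counterpart. Equivalently, one has to verify that, for nonnegative loop signals, the time-varying operator is dominated by $\mathscr{V}_{A_i,\bar h_i}$ in the positive $L_1$-sense underlying Theorem \ref{th:ILC}, and this is exactly where the positivity of the kernels $A_i(\theta)\ge0$ does the real work. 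A clean fallback, should the operator-theoretic bookkeeping prove delicate, is a direct comparison argument: positivity gives $0\le\dot x(t)\le A_0x(t)+\sum_{i=1}^N\int_{-\bar h_i}^0A_i(\theta)x(t+\theta)d\theta$ whenever $x\ge0$, so the time-varying trajectory is bounded above by the constant-kernel trajectory, whose convergence to zero then forces $x(t)\to0$.
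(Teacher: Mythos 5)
Your argument is essentially correct, but note that the paper itself gives no proof of this statement --- it simply defers to \cite{Shen:14} --- so the comparison is with that reference rather than with an in-text argument. Your easy direction (taking $h_i(t)\equiv\bar h_i$) and your DC-gain computation $\rho(\widehat{G}_1(0)\widehat{G}_2(0))=\rho(-A_0^{-1}\sum_{i}\bar A_i)$ are fine, as is the observation that $\pi_1>0$, $\widehat{G}_1(0)\ge0$ and $\pi_1^T\widehat{G}_1(0)+\pi_2^T<0$ force $\pi_2<0$, so that the certificate of Theorem \ref{th:ILC}(iii) is monotone in $\widehat{G}_2(0)$ and survives any componentwise decrease of the delay kernel. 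The one genuine weak point, which you correctly identify yourself, is that Theorem \ref{th:ILC} is stated for time-\emph{invariant} positive operators with transfer functions, whereas $\mathscr{V}_{A_i,h_i}$ with $h_i(\cdot)$ time-varying has no transfer function; to make the ILC route rigorous you must show that for nonnegative $L_1$ inputs the time-varying operator satisfies the integral linear constraint with the dominating matrix $\widehat{\mathscr{V}_{A_i,\bar h_i}}(0)=\int_{-\bar h_i}^0A_i(\theta)d\theta$, which is exactly where $A_i(\theta)\ge0$ and $x\ge0$ enter. Your fallback --- the trajectory comparison $0\le\dot x(t)\le A_0x(t)+\sum_i\int_{-\bar h_i}^0A_i(\theta)x(t+\theta)d\theta$ combined with the quasimonotone comparison principle for cooperative delay systems --- closes this gap cleanly and is in fact the same mechanism underlying the proof in \cite{Shen:14}; you should also record explicitly that Hurwitz stability of $A_0$ (needed for $\widehat{G}_1(0)=-(\mathds{1}_N\otimes I_n)A_0^{-1}(\mathds{1}_N^T\otimes I_n)\ge0$) follows from statement (ii) by Metzler monotonicity, since $A_0\le A_0+\sum_i\bar A_i$. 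With those two points made precise, your proof stands; what your ILC route buys over the comparison argument is a single frequency-domain certificate that is uniform over all admissible delay selections, which meshes naturally with the performance extensions elsewhere in the paper.
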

\begin{proof}
  \blue{  The proof can be found in \cite{Shen:14} and is thus omitted here.}
\end{proof}

\begin{proposition}\label{prop:distributed}
The time-delay system \eqref{eq:TDS4}
coincides with the interconnection \eqref{eq:G1G2} with
\begin{equation}
  \begin{array}{lcl}
    \widehat{G}_1(s)&=&C(sI-A_0)^{-1}E\\
    \widehat{G}_2(s)&=&\diag_{i=1}^N(\widehat{\mathscr{V}_{A_i,h}}(s))
  \end{array}
\end{equation}
where $E=\mathds{1}_N^T\otimes I_n$ and $C=\mathds{1}_N\otimes I_n$.
\end{proposition}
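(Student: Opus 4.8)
The plan is to prove the asserted coincidence by direct substitution, exhibiting explicitly the two loop signals that realize the interconnection \eqref{eq:G1G2} and then eliminating them to recover the dynamics \eqref{eq:TDS4}. Because the claim is only that the two descriptions \emph{coincide}, no positivity or stability argument enters here: it is a purely structural verification resting on the transfer function of $\mathscr{V}_{B,h}$ computed in the preceding proposition.

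First I would designate the forward block $G_1$ as the operator with realization $\dot x = A_0 x + E u_1$, $u_2 = Cx$ (the exogenous signals $d_1,d_2$ and the initial data play no part in the input--output identity and can be set to zero). Its transfer function is $C(sI-A_0)^{-1}E$, which is exactly the claimed $\widehat G_1(s)$. With $C = \mathds{1}_N \otimes I_n$, the output reduces to $u_2 = Cx = \col_{i=1}^N(x)$, i.e. $N$ identical copies of the state, so that $(u_2)_i = x$ for each $i=1,\ldots,N$.

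Next I would route $u_2$ through the feedback block $G_2 = \diag_{i=1}^N(\mathscr{V}_{A_i,\bar h_i})$. By the definition \eqref{eq:dkslsldkklmdklmsdmksmdklm} of $\mathscr{V}_{B,h}$, the $i$-th component of $u_1 = G_2 u_2$ is $(u_1)_i = \mathscr{V}_{A_i,\bar h_i}((u_2)_i) = \int_{-\bar h_i}^0 A_i(\theta) x(t+\theta)\,d\theta$, and correspondingly $\widehat G_2(s) = \diag_{i=1}^N(\widehat{\mathscr{V}_{A_i,\bar h_i}}(s))$, matching the stated $\widehat G_2$. Closing the loop through $E = \mathds{1}_N^T \otimes I_n$ and using the identity $(\mathds{1}_N^T \otimes I_n)\col_{i=1}^N(v_i) = \sum_{i=1}^N v_i$ then gives $E u_1 = \sum_{i=1}^N \int_{-\bar h_i}^0 A_i(\theta) x(t+\theta)\,d\theta$, so that $\dot x = A_0 x + E u_1$ coincides with \eqref{eq:TDS4}.

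The only delicate point --- more bookkeeping than genuine obstacle --- is keeping track of the Kronecker structure so that each kernel $A_i$ is routed through its own copy $\mathscr{V}_{A_i,\bar h_i}$ without spurious cross terms: one checks that $C = \mathds{1}_N \otimes I_n$ fans the state out into the $N$ channels consumed by the block-diagonal $G_2$, and that $E = \mathds{1}_N^T \otimes I_n$ fans the $N$ channel outputs back in by summation. Once these two contractions and the corresponding dimensions are verified, the substitution is immediate and the coincidence is established.
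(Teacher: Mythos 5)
Your verification is correct and is exactly the argument the paper intends: the paper states this proposition without proof (its analogues, e.g.\ Proposition \ref{prop:TDS1}, are justified by ``direct substitution''), and your explicit bookkeeping --- $C=\mathds{1}_N\otimes I_n$ fanning the state out into $N$ channels, the block-diagonal $G_2$ applying each $\mathscr{V}_{A_i,\bar h_i}$, and $E=\mathds{1}_N^T\otimes I_n$ summing the channel outputs --- is precisely that substitution carried out in detail. Nothing is missing.
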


%
%
%

%
We then have the following result:
\begin{theorem}\label{th:TDS4}
    Define $\textstyle \bar{A}_i:=\int_{-\bar h_i}^0A_i(\theta)d\theta$. Then, the following statements are equivalent:
  \begin{enumerate}[(i)]
   \item The system \eqref{eq:TDS4} is asymptotically stable.
    \item $\textstyle A_0+\sum_{i=1}^N\bar{A}_i$ is Hurwitz stable.
    \item There exists a $v\in\mathbb{R}_{>0}^n$ such that $\textstyle v^T(A_0+\sum_{i=1}^N\bar A_i)<0$.
    \item $A_0$ is Hurwitz stable and $\textstyle-(\sum_{i=1}^N\bar A_i)A_0^{-1}$ is Schur stable.
    \item There exist matrices $P,Q_i\in\mathbb{D}^n_{\succ0}$, $i=1,\ldots,N$, such that the (diagonal) Riccati inequality
    \begin{equation}
    A_0^TP+PA_0+\sum_{i=1}^N(Q_i+P\bar A_iQ_i^{-1}\bar A_i^TP)\prec0
    \end{equation}
    holds.
  \end{enumerate}
\end{theorem}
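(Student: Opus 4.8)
The plan is to reduce \eqref{eq:TDS4} to the interconnection \eqref{eq:G1G2} via Proposition \ref{prop:distributed} and then apply the integral linear constraint result, Theorem \ref{th:ILC}, rather than a scaled small-gain theorem. This routing is essential: the distributed-delay operator $G_2$ is an infinite-dimensional integral operator whose $L_p$-gain overestimates its effect, whereas Theorem \ref{th:ILC} only sees its static gain $\widehat{G}_2(0)=\diag_{i=1}^N(\bar A_i)$, which is exact. I would first record that $G_2$ is a bounded positive operator (finite integration window, $A_i(\theta)\ge0$), and that $G_1$, with transfer function $C(sI-A_0)^{-1}E$ where $E=\mathds{1}_N^T\otimes I_n$ and $C=\mathds{1}_N\otimes I_n$, is a bounded positive operator precisely when $A_0$ is Hurwitz.

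I would begin with the finite-dimensional statements. The equivalence (ii) $\Leftrightarrow$ (iii) is the standard characterization of Hurwitz stability of the Metzler matrix $A_0+\sum_i\bar A_i$ by a positive left-annihilating vector. Since $\bar A_i\ge0$, we have $A_0+\sum_i\bar A_i\ge A_0$ componentwise with both matrices Metzler, so monotonicity of the spectral abscissa shows that (ii) forces $A_0$ to be Hurwitz; this justifies inverting $A_0$ in (iv) and (v). For (ii) $\Leftrightarrow$ (iv) I would use the classical fact that, for a Hurwitz Metzler $A_0$ and $B:=\sum_i\bar A_i\ge0$, the matrix $A_0+B$ is Hurwitz if and only if $\rho(-A_0^{-1}B)<1$, together with $\rho(-A_0^{-1}B)=\rho(-BA_0^{-1})$. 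For (ii) $\Leftrightarrow$ (v) I would proceed exactly as in the proof of Theorem \ref{th:TDS1}: apply Theorem \ref{th:colombino} to the auxiliary LTI positive system $(A_0,\row_{i=1}^N(\bar A_i),\mathds{1}_N\otimes I_n,0)$ with $D=\diag_{i=1}^N(Q_i)$, and eliminate the lower-right block by a Schur complement to reach the displayed Riccati inequality; the diagonal freedom in the $Q_i$ reflects the repeated-scalar structure of each delay channel, and by Proposition \ref{prop:Stoer} the resulting scaled-norm infimum equals the spectral radius of the static gain.

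For the core equivalence (i) $\Leftrightarrow$ (ii), I would invoke Theorem \ref{th:ILC}: the interconnection is stable if and only if $\rho(\widehat{G}_1(0)\widehat{G}_2(0))<1$. The key computation is to show that this quantity collapses to the finite-dimensional one. Using the mixed-product rule one gets $\widehat{G}_1(0)=(\mathds{1}_N\mathds{1}_N^T)\otimes(-A_0^{-1})$, and with $\widehat{G}_2(0)=\diag_{i=1}^N(\bar A_i)$ the identity $\rho(XY)=\rho(YX)$ (cycling the outer factor $C=\mathds{1}_N\otimes I_n$) telescopes the product to $\rho\big(-A_0^{-1}\sum_{i=1}^N\bar A_i\big)$, so that $\rho(\widehat{G}_1(0)\widehat{G}_2(0))<1$ is exactly (iv), hence (ii). It then remains to dispatch the case where $A_0$ is not Hurwitz, in which Theorem \ref{th:ILC} does not apply because $G_1$ is unbounded: here (ii) already fails by the monotonicity remark above, and a short left-eigenvector argument—testing $V(t)=w^T x(t)$ against the nonnegative left Perron vector $w$ of $A_0$ with $w^T A_0=\alpha(A_0)w^T$, $\alpha(A_0)\ge0$, along a nonnegative trajectory—shows $V$ is nondecreasing and bounded away from zero, so the system is not asymptotically stable either, keeping (i) and (ii) in agreement.

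The main obstacle I anticipate is the spectral-radius reduction together with the well-posedness bookkeeping it forces. Because $G_2$ is genuinely infinite-dimensional, I must route the argument through the separation/ILC result instead of a small-gain bound, and I must separately certify that $A_0$ is Hurwitz (so that $G_1$ is a bounded operator and $A_0^{-1}$ exists) before the Kronecker computation of $\rho(\widehat{G}_1(0)\widehat{G}_2(0))$ is even meaningful. Everything else reduces to routine Metzler-matrix theory and a single Schur complement.
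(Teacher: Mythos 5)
Your proposal is correct and follows essentially the same route as the paper: reduce \eqref{eq:TDS4} to the interconnection \eqref{eq:G1G2} via Proposition \ref{prop:distributed}, apply Theorem \ref{th:ILC} to get $\rho(\widehat{G}_1(0)\widehat{G}_2(0))<1$, collapse this via $\rho(XY)=\rho(YX)$ to $\rho(-(\sum_{i=1}^N\bar A_i)A_0^{-1})<1$, and obtain the remaining statements exactly as in Theorem \ref{th:TDS1}. Your additional bookkeeping (certifying that $A_0$ must be Hurwitz for (ii) to hold, and the left-Perron-vector argument ruling out stability when $A_0$ is not Hurwitz) is a sound and welcome elaboration of details the paper leaves implicit, but it does not change the argument's substance.
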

\begin{proof}
Applying Theorem \ref{th:ILC} yields that the interconnection is well-posed, positive and stable if and only if $\rho(\widehat{G}_1(0)\widehat{G}_2(0))<1$ or, equivalently, if and only if  $\rho(\widehat{G}_2(0)\widehat{G}_1(0))<1$. Expanding the product yields
\begin{equation}
  \widehat{G}_2(0)\widehat{G}_1(0)=-(\mathds{1}_N\otimes I_n)A_0^{-1}\begin{bmatrix}
    \bar{A}_1 & \ldots & \bar{A}_N
  \end{bmatrix}
\end{equation}
and hence the spectral radius condition is equivalent to saying that the above nonnegative matrix is Schur stable. Since $\rho(AB)=\rho(BA)$ for any matrices $A,B$ of compatible dimensions, then we get the result of statement (iv). By exploiting the similarity of the conditions with those of Theorem \ref{th:TDS1}, the other statements directly follow.
\end{proof}

The last statement in the above result is the distributed-delay analogue of the Riccati inequality for discrete delays and does not seem to have been reported elsewhere in the literature. It is also interesting to point out that the results would have been completely different if scaled small-gain theorems would have been considered. Indeed, such results would have considered the $L_p$-gain of the operator \eqref{eq:dkslsldkklmdklmsdmksmdklm} which are given by $||\widehat{\mathscr{V}_{A_i,h}}(0)||_p$ for $p\in\{1,2,\infty\}$. It is clear that, in such a case, the obtained results would have been conservative.

\blue{\subsection{Performance analysis -- the non-constant  kernel case}

Let us consider here is the following system with distributed delays
    \begin{equation}\label{eq:TDS5_input}
    \begin{array}{rcl}
        \dot{x}(t)&=&A_0x(t)+\sum_{i=1}^N\int_{-\bar h_i}^0A_i(\theta)x(t+\theta)d\theta+E_uu(t)\\
        y(t)&=&C_0x(t)+\sum_{i=1}^N\int_{-\bar h_i}^0C_i(\theta)x(t+\theta)d\theta+F_uu(t)
    \end{array}
\end{equation}
 where $u\in\mathbb{R}^{n_u}$ and $y\in\mathbb{R}^{n_y}$ are the input and the output, respectively. As for system \eqref{eq:dkqsodmklskodsfklmsd}, the above system is internally positive if and only if $A_0$ is Metzler, $C,E_u,F_u$ are nonnegative for all $i=1,\ldots,N$ and the functions $A_i(\cdot),C_i(\cdot)$ are nonnegative on their domain.

\begin{theorem}
  Assume that the system \eqref{eq:TDS5_input} is internally positive and let us define the matrices
  \begin{equation}
    \bar{A}_i:=\int_{-\bar{h}_i}^0A_i(s)ds\ \textnormal{and}\     \bar{C}_i:=\int_{-\bar{h}_i}^0C_i(s)ds.
  \end{equation}
Then, the following statements hold:
  \begin{enumerate}[(i)]
    \item The system \eqref{eq:TDS5_input} is asymptotically stable and has an $L_1$-gain smaller than $\gamma_1$ if and only there exists a vector $\lambda\in\mathbb{R}^n_{>0}$ such that
    \begin{equation}
        \begin{bmatrix}
          \lambda\\
          \mathds{1}_{n_y}
          \end{bmatrix}^T\begin{bmatrix}
            \textstyle A_0+\sum_{i=1}^N\bar{A}_i &\quad& E_u\\
            \textstyle  C_0+\sum_{i=1}^N\bar{C}_i &\quad& F_u
          \end{bmatrix}<\begin{bmatrix}
            0\\
            \gamma_1\mathds{1}_{n_u}
          \end{bmatrix}^T.
    \end{equation}
      \item The system \eqref{eq:TDS5_input} is asymptotically stable and has an $L_2$-gain smaller than $\gamma_2$ if and only if there exist matrices $P,Q_i\in\mathbb{D}^n_{\succ0}$, $R_i\in\mathbb{D}_{\succ0}^{n_y}$, $i=1,\ldots,N$, such that
             \begin{equation}
  \begin{bmatrix}
    A_0^TP+PA_0+\sum_{i=1}^N\bar{A}_i^TQ_i\bar{A}_i+\sum_{i=1}^N\bar{C}_i^TR_i\bar{C}_i  & \mathds{1}_N^T\otimes P & 0& PE_u & C_0^T\\
   \star &  -\diag_{i=1}^N(Q_i) &  0 & 0 &0\\
    \star &  \star &  -\diag_{i=1}^N(R_i) & 0 & \mathds{1}_N\otimes I_{n_y}\\
     \star &  \star&  \star & -\gamma_2I_{n_u} & F_u^T\\
     \star &  \star&  \star & \star & -\gamma_2I_{n_y}
  \end{bmatrix}\prec0.
\end{equation}
    %
    \item The system \eqref{eq:TDS5_input} is  asymptotically stable and has an $L_\infty$-gain smaller than $\gamma_\infty$ if and only if there exists a vector $\lambda\in\mathbb{R}^n_{>0}$ such that
    \begin{equation}
    \begin{bmatrix}
                \textstyle A_0+\sum_{i=1}^N\bar{A}_i &\quad& E_u\\
            \textstyle  C_0+\sum_{i=1}^N\bar{C}_i&\quad& F_u
          \end{bmatrix}\begin{bmatrix}
          \lambda\\
          \mathds{1}_{n_u}
          \end{bmatrix}<\begin{bmatrix}
            0\\
            \gamma_\infty\mathds{1}_{n_y}
          \end{bmatrix}.
    \end{equation}
  \end{enumerate}
\end{theorem}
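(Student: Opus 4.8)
The plan is to reuse the performance-as-feedback reformulation from the proof of Theorem~\ref{prop:perf_CDD}, but with the integral-linear-constraint result of Theorem~\ref{th:ILC} replacing the scaled small-gain theorems, exactly as the stability analysis of Theorem~\ref{th:TDS4} did. First I would close the loop with the static nonnegative block $u=My$, $||M||_p\le\gamma_p^{-1}$, and recall that for positive systems robust stability of the resulting interconnection for all such $M$ is equivalent to the $u\mapsto y$ map having $L_p$-gain at most $\gamma_p$. The system \eqref{eq:TDS5_input} is then cast as an interconnection \eqref{eq:G1G2} in which $G_1$ is the rational part carrying the $x$, $u$ and $y$ channels, while $G_2=\diag(\mathscr{V}_{A_1,h_1},\ldots,\mathscr{V}_{A_N,h_N},\mathscr{V}_{C_1,h_1},\ldots,\mathscr{V}_{C_N,h_N},M)$ collects the distributed-delay operators and the performance block. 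As stressed after Theorem~\ref{th:TDS4}, one must use Theorem~\ref{th:ILC} and not a gain bound, since the full DC matrices $\widehat{\mathscr{V}_{A_i,\bar h_i}}(0)=\bar A_i$ and $\widehat{\mathscr{V}_{C_i,\bar h_i}}(0)=\bar C_i$ (rather than their norms) must be retained for the characterization to stay tight.

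For the $L_1$- and $L_\infty$-gains (statements (i) and (iii)) I would use the fact, central to the whole separation viewpoint, that a positive system attains its gain at zero frequency, so that the gain equals $||\widehat\Sigma(0)||_p$ and only the DC data of the operators enter. At $s=0$ each operator integrates to its kernel integral, so the closed-loop DC map from $u$ to $y$ is that of the lumped finite-dimensional positive system $(A_0+\sum_{i=1}^N\bar A_i,\,E_u,\,C_0+\sum_{i=1}^N\bar C_i,\,F_u)$. The stated conditions are then nothing but the row-form $L_1$ linear program of Theorem~\ref{th:ebihara} and the column-form $L_\infty$ linear program of Theorem~\ref{th:briat} applied to this lumped data (equivalently, running the extended-block computation of Theorem~\ref{prop:perf_CDD} with $\Delta_e=\diag(\text{delays},M)$ and eliminating the multiplier attached to $M$ before normalizing). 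Since the induced-norm linear programs for nonnegative matrices are exact, both directions of the equivalence follow.

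For the $L_2$-gain (statement (ii)) I would invoke the LMI form of the separation result, Theorem~\ref{th:ILC}(v), specialized to general bounded operators via Remark~\ref{remark:dszkdls}, which allows the LMI attached to the operator block $G_2$ to be replaced by static conditions on its DC value $\widehat{G}_2(0)$. For the $A$-delays this yields quadratic terms $\bar A_i^TQ_i\bar A_i$ with $Q_i\in\mathbb{D}^n_{\succ0}$ (the $A$-channel has width $n$), and for the $C$-delays the terms $\bar C_i^TR_i\bar C_i$ with $R_i\in\mathbb{D}^{n_y}_{\succ0}$ (the $C$-channel has width $n_y$). Writing the bounded-real LMI for $G_1$, with the performance block $M$ contributing the diagonal entries $-\gamma_2 I_{n_u}$ and $-\gamma_2 I_{n_y}$ together with the coupling $F_u^T$, then Schur-complementing out the two delay channels places $\sum_i\bar A_i^TQ_i\bar A_i+\sum_i\bar C_i^TR_i\bar C_i$ in the $(1,1)$ entry and leaves the couplings $\mathds{1}_N^T\otimes P$ and $\mathds{1}_N\otimes I_{n_y}$ in the positions claimed, reproducing the stated inequality.

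I expect the $L_2$ case to be the main obstacle. The delicate points are (a) threading the block structure through the interconnection so the DC conditions on $\bar A_i$ and $\bar C_i$ land in exactly the right slots after the Schur complements, and (b) proving that restricting $P$, $Q_i$ and $R_i$ to diagonal matrices preserves necessity, so that the ``if and only if'' is genuinely two-sided; this rests on the exactness of Theorem~\ref{th:ILC}(v) for positive operators and on the diagonal KYP certificate of \cite{Rantzer:16}, and should be weighed against the caution raised after Theorem~\ref{th:ILC} about choosing both multipliers diagonal. A minor point common to all three statements is the non-square performance block when $n_u\ne n_y$, which is handled by zero-padding $u$ or $y$ as in Theorem~\ref{prop:perf_CDD}, after checking that the padding leaves the computed gain unchanged.
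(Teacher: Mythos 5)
Your proposal is correct and follows essentially the same route as the paper: the performance channel is absorbed as a norm-bounded feedback block as in Theorem~\ref{prop:perf_CDD}, the distributed-delay operators are handled through the separation result of Theorem~\ref{th:ILC} together with Remark~\ref{remark:dszkdls} so that only their DC values $\bar A_i$, $\bar C_i$ enter, and the stated conditions are recovered by eliminating the multipliers (for $L_1$/$L_\infty$) or by a Schur complement (for $L_2$). Your caveat about diagonality is resolved exactly as the paper does it: the multiplier $Z_1$ attached to the operator block is eliminated via $\mathcal{D}^TZ_2\mathcal{D}-Z_1\prec0$ before any diagonal restriction, so only one multiplier ($Z_2$, split into the $Q_i$ and $R_i$) needs to be taken diagonal, which is covered by the remark following Theorem~\ref{th:ILC}.
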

\begin{proof}
\noindent\textbf{Proof of the statement (i).} From the Theorem \ref{th:ILC} and Remark \ref{remark:dszkdls}, this is equivalent to saying that there exist $\lambda\in\mathbb{R}_{>0}^n$, $\mu_1,\mu_2\in\mathbb{R}_{>0}^{Nn+Nn_y}$ such that
\begin{equation}
  \begin{bmatrix}
    \lambda\\
    \hline
    \mu_1\\
    \hline
    \mathds{1}_{n_y}
  \end{bmatrix}^T\begin{bmatrix}
    A_0 & \vline & \mathds{1}_N^T\otimes I_n & 0 &\vline& E_u\\
    \hline
    \mathds{1}_N\otimes I_n & \vline & 0 & 0 & \vline & 0\\
    \mathds{1}_N\otimes I_{n_y} & \vline & 0 & 0 & \vline & 0\\
    \hline
    C_0 & \vline & 0 & \mathds{1}_N^T\otimes I_{n_y} &\vline& F_u
    \end{bmatrix}<\begin{bmatrix}
    0\\
    \hline
    \mu_2\\
    \hline
    \gamma_1\mathds{1}_{n_u}
  \end{bmatrix}^T
\end{equation}
together with $\mu_2^T\mathcal{D}-\mu_1^T<0$ where $\mathcal{D}=\diag\left(\diag_{i=1}^N\left(\bar{A}_i\right),\diag_{i=1}^N\left(\bar{C}_i\right)\right)$. This is equivalent to the inequalities
\begin{equation}
  \begin{array}{rcl}
    \lambda^TA_0+\mu_1^T\begin{bmatrix}
         \mathds{1}_N\otimes I_n\\
    \mathds{1}_N\otimes I_{n_y}
    \end{bmatrix}+\mathds{1}_{n_y}^TC_0&<&0\\
      \lambda^T\begin{bmatrix}
         \mathds{1}_N^T\otimes I_n & 0_{n\times Nn_y}\end{bmatrix}+\mathds{1}_{n_y}^T\begin{bmatrix}
           0_{n_y\times Nn} & \mathds{1}_N^T\otimes I_{n_y}
         \end{bmatrix}-\mu_2^T&<&0\\
         \lambda^T E_u+\mathds{1}_{n_y}^TF_u-\gamma_1\mathds{1}_{n_u}^T&<&0.
  \end{array}
\end{equation}
Using the fact that $\mu_2^T\mathcal{D}-\mu_1^T<0$, we get that the following equivalent condition
\begin{equation}
    \lambda^TA_0+\mu_2^T\begin{bmatrix}
         \col_{i=1}^N(\bar{A}_i)\\
    \col_{i=1}^N(\bar{C}_i)
    \end{bmatrix}+\mathds{1}_{n_y}^TC_0<0
\end{equation}
and using the second inequality we can eliminate $\mu_2$ to get the equivalent inequalities
\begin{equation}
  \begin{array}{rcl}
        \textstyle\lambda^T(A_0+\sum_{i=1}^N\bar{A}_i)+\mathds{1}_{n_y}^T(C_0+\sum_{i=1}^N\bar{C}_i) &<&0\\
         \lambda^T E_u+\mathds{1}_{n_y}^TF_u-\gamma_1\mathds{1}_{n_u}^T&<&0.
  \end{array}
\end{equation}
The proof is completed.

\noindent\textbf{Proof of the statement (ii).} From the Theorem \ref{th:ILC} and Remark \ref{remark:dszkdls}, this is equivalent
\begin{equation}
  \begin{bmatrix}
    A_0^TP+PA_0 & \mathds{1}_N^T\otimes P &0 & PE_u & (\mathds{1}_{2N}^T\otimes I_n)Z_1 &C_0^T\\
    \star &  -Z_2^1 &  -Z_2^2 & 0 & 0 &0\\
    \star &  \star &  -Z_2^3 & 0 & 0 & \mathds{1}_N\otimes I_{n_y} \\
    \star &  \star&  \star & -\gamma_2I_{n_u} &  0 & F_u^T\\
    \star &  \star&  \star & \star &  -Z_1 & 0\\
    \star &  \star&  \star & \star &  \star& -\gamma_2I_{n_y}\\
  \end{bmatrix}\prec0
\end{equation}
and $\mathcal{D}^TZ_2\mathcal{D}-Z_1\prec0$ where $\mathcal{D}=\diag\left(\diag_{i=1}^N\left(\bar{A}_i\right),\diag_{i=1}^N\left(\bar{C}_i\right)\right)$. Performing a Schur complement and combining these inequalities yields
\begin{equation}
  \begin{bmatrix}
    A_0^TP+PA_0+(\mathds{1}_{2N}^T\otimes I_n)\mathcal{D}^TZ_2\mathcal{D}(\mathds{1}_{2N}\otimes I_n) & \mathds{1}_N^T\otimes P & 0 & PE_u & C_0^T\\
   \star &  -Z_2^1 &  -Z_2^2 & 0 &0\\
    \star &  \star &  -Z_2^3 & 0 & \mathds{1}_N\otimes I_{n_y}\\
     \star &  \star&  \star & -\gamma_2I_{n_u} & F_u^T\\
     \star &  \star&  \star & \star & -\gamma_2I_{n_y}
  \end{bmatrix}\prec0.
\end{equation}
Since the system is positive, then we can restrict ourselves to a diagonal $Z_2=\diag(\diag_{i=1}^N(Q_i),\diag_{i=1}^N(R_i))$, $Q_iR_i,\in\mathbb{D}^n_{\succ0}$, and hence
\begin{equation}
  \begin{bmatrix}
    A_0^TP+PA_0+\sum_{i=1}^N\bar{A}_i^TQ_i\bar{A}_i+\sum_{i=1}^N\bar{C}_i^TR_i\bar{C}_i  & \mathds{1}_N^T\otimes P & 0& PE_u & C_0^T\\
   \star &  -\diag_{i=1}^N(Q_i) &  0 & 0 &0\\
    \star &  \star &  -\diag_{i=1}^N(R_i) & 0 & \mathds{1}_N\otimes I_{n_y}\\
     \star &  \star&  \star & -\gamma_2I_{n_u} & F_u^T\\
     \star &  \star&  \star & \star & -\gamma_2I_{n_y}
  \end{bmatrix}\prec0.
\end{equation}

\noindent\textbf{Proof of the statement (iii).} It is similar to the proof of the statement (i) and it is thus omitted.
\end{proof}}
%
%
%
%

\blue{\section{Stability and performance of neutral linear positive systems}\label{sec:neutral}

\subsection{Stability analysis}

Neutral systems have been extensively studied as they arise, for instance, transmission lines, models of population dynamics, etc. \cite{Hale:93,Hale:02,Hale:77,Niculescu:01,Bellen:99,Verriest:07}. The special case of linear positive neutral systems has been considered in  \cite{Ebihara:16,Ebihara:17} in the single constant delay case. We extend here these stability analysis results to the case of multiple delays, possibly time-varying, and to performance analysis.

Let us start with the following result:
\begin{proposition}\label{prop:neutral1}
The time-delay system
\begin{equation}\label{eq:TDS_neutral}
  \dot{x}(t)=A_0x(t)+\sum_{i=1}^NA_{r,i}x(t-h_i)+\sum_{i=1}^NA_{n,i}\dot{x}(t-h_i)
\end{equation}
coincides with the uncertain system \eqref{eq:uncertain} where $A=A_0$, $E=[I_n\ \ldots\ I_n]$, $C=\col_{i=1}^N(A_{n,i}A_0+A_{r,i})$ and $F=\col_{i=1}^N(\mathds{1}^T_N\otimes A_{n,i})$ and
  \begin{equation*}
    \Delta\in\boldsymbol{\Delta_d}:=\left\{\diag_{i=1}^N(e^{-sh_i}I_n):\ h\ge0,\ \Re(s)\ge0\right\}.
  \end{equation*}
\end{proposition}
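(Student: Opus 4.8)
The plan is to establish the equivalence by direct verification, exactly in the spirit of the proofs of Propositions~\ref{prop:TDS1} and~\ref{prop:TDS3}: I would close the loop $w=\Delta z$ in the candidate system~\eqref{eq:uncertain} with the proposed data $(A,E,C,F)$ and check that the resulting dynamics reproduce~\eqref{eq:TDS_neutral} term by term. The one genuinely non-routine point is the treatment of the neutral term $A_{n,i}\dot{x}(t-h_i)$, since the loop signal $z$ carries no explicit copy of $\dot{x}$; the whole content of the proposition is that this delayed derivative can nonetheless be recovered through the feedback, and the particular shapes of $C$ and $F$ are precisely what make this work.

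Concretely, I would first partition $z=\col_{i=1}^N(z_i)$ and $w=\col_{i=1}^N(w_i)$ into $n$-dimensional blocks and note that, because $\Delta=\diag_{i=1}^N(e^{-sh_i}I_n)$ acts blockwise as a delay, the loop equation reads $w_i(t)=z_i(t-h_i)$ for each $i$. Since $E=[I_n\ \ldots\ I_n]=\mathds{1}_N^T\otimes I_n$, the state equation becomes $\dot{x}(t)=A_0x(t)+\sum_{i=1}^N w_i(t)$, so it suffices to show that $w_i(t)=A_{r,i}x(t-h_i)+A_{n,i}\dot{x}(t-h_i)$. For this I would compute the $i$-th output block $z_i(t)=(A_{n,i}A_0+A_{r,i})x(t)+(\mathds{1}_N^T\otimes A_{n,i})w(t)$, observe that $(\mathds{1}_N^T\otimes A_{n,i})w(t)=A_{n,i}\sum_k w_k(t)$, and substitute $\sum_k w_k(t)=\dot{x}(t)-A_0x(t)$ from the state equation. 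The $A_{n,i}A_0x(t)$ terms then cancel and one is left with the clean identity $z_i(t)=A_{r,i}x(t)+A_{n,i}\dot{x}(t)$; delaying by $h_i$ gives $w_i(t)=z_i(t-h_i)=A_{r,i}x(t-h_i)+A_{n,i}\dot{x}(t-h_i)$, which inserted into the state equation recovers~\eqref{eq:TDS_neutral}. The converse inclusion is obtained by running the same chain of identities backwards: given a solution of~\eqref{eq:TDS_neutral}, one \emph{defines} $z_i(t):=A_{r,i}x(t)+A_{n,i}\dot{x}(t)$ and $w_i(t):=z_i(t-h_i)$ and checks that $z=Cx+Fw$ and the state equation of~\eqref{eq:uncertain} hold.

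I expect the main obstacle to be bookkeeping rather than anything conceptual: one must see that the choices $C=\col_{i=1}^N(A_{n,i}A_0+A_{r,i})$ and $F=\col_{i=1}^N(\mathds{1}_N^T\otimes A_{n,i})$ are forced, not merely convenient. The transparent way to motivate them is precisely the elimination step above—the output is required to encode $A_{r,i}x(t)+A_{n,i}\dot{x}(t)$, and the only way to express the unavailable $\dot{x}(t)$ inside the LFT is to replace it by $A_0x(t)+Ew(t)$; this substitution is exactly what splits the coefficient of $x(t)$ into $A_{n,i}A_0+A_{r,i}$ and produces the feedthrough $A_{n,i}E=\mathds{1}_N^T\otimes A_{n,i}$ acting on $w$. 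I would also flag, for the benefit of the later stability analysis, that unlike the retarded cases the feedthrough $F$ is nonzero, which foreshadows the role of the neutral (``strong stability'') part in well-posedness; at the level of the present proposition, however, this is immaterial, since $h_i>0$ makes $w(t)$ depend only on past values of $z$ and the equivalence is a purely formal identity.
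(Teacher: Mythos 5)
Your verification is correct and is precisely the ``direct substitution'' argument the paper intends for these LFT reformulations (cf.\ the one-line proof of Proposition~\ref{prop:TDS1}); the key identity $z_i(t)=A_{r,i}x(t)+A_{n,i}\dot{x}(t)$, obtained by eliminating $\dot{x}$ via $\sum_k w_k=\dot{x}-A_0x$, is exactly what makes the stated $C$ and $F$ work. Nothing is missing.
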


It has been proven in \cite{Ebihara:16,Ebihara:17} that the system \eqref{eq:TDS_neutral} is positive if and only if $A_0$ is Metzler and the matrices $A_{n,i}A_0+A_{r,i}$ and $A_{n,i}$ are nonnegative for all $i=1,\ldots,N$.
%
%
We then have the following result:
\begin{theorem}\label{th:TDSneutral}
 Assume that the system \eqref{eq:TDS_neutral} is positive. Then, the following statements are equivalent:
  \begin{enumerate}[(i)]
    \item The system \eqref{eq:TDS_neutral} is asymptotically stable.
    \item The nonnegative matrix $\textstyle\sum_{i=1}^NA_{n,i}$ is Schur stable and the Metzler matrix
    \begin{equation}\label{eq:dkposdksmkdskdskm1}
    \left(I_n- \sum_{i=1}^NA_{n,i}\right)^{-1}\left(A_0+\sum_{i=1}^NA_{r,i}\right)
    %
    \end{equation}
    is Hurwitz stable.
    %
        %
    %
    \item There exist matrices $P,Q_i\in\mathbb{D}^n_{\succ0}$, $i=1,\ldots,N$, such that the LMI
    \begin{equation}\label{eq:kdsodlsmdkkdsmd_neutral}
    \begin{bmatrix}
      A_0^TP+PA_0 & \star & \star\\
      \col_{i=1}^N(P) & -\diag_{i=1}^N(Q_i) &\star\\
       \col_{i=1}^N(Q_i(A_{n,i}A_0+A_{r,i})) &  \col_{i=1}^N(Q_i(\mathds{1}_N^T\otimes A_{n,i})) & -\diag_{i=1}^N(Q_i)\\
    \end{bmatrix}\prec0
    \end{equation}
    holds.
    \item $A_0$ is Hurwitz stable and the matrices
    \begin{equation}\label{eq:dkposdksmkdskdskm2}
 \sum_{i=1}^NA_{n,i}
  \quad\textnormal{and}\quad-\left(\sum_{i=1}^NA_{r,i}\right)A_0^{-1}
    \end{equation}
    are Schur stable.
  \end{enumerate}
\end{theorem}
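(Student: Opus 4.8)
The plan is to push the neutral system through exactly the machinery used for Theorem~\ref{th:TDS1} and Theorem~\ref{th:TDS1c}, i.e.\ to apply the scaled small-gain results to the linear-fractional representation supplied by Proposition~\ref{prop:neutral1}, in which $A=A_0$, $E=[I_n\ \ldots\ I_n]$, $C=\col_{i=1}^N(A_{n,i}A_0+A_{r,i})$, $F=\col_{i=1}^N(\mathds{1}_N^T\otimes A_{n,i})$ and $\Delta\in\boldsymbol{\Delta_d}$. The equivalence between (i) and (iii) I would obtain directly from Theorem~\ref{th:colombino}: since the delay blocks $e^{-sh_i}I_n$ are repeated scalars, the admissible scalings can be taken as $D=\diag_{i=1}^N(Q_i)$ with $Q_i\in\mathbb{D}^n_{\succ0}$ and $P$ diagonal, and substituting the above $A,E,C,F$ into the LMI of Theorem~\ref{th:colombino} reproduces \eqref{eq:kdsodlsmdkkdsmd_neutral} block by block (the $(2,1)$ block becomes $\col_i(P)$, the $(3,1)$ block $DC=\col_i(Q_i(A_{n,i}A_0+A_{r,i}))$, and the $(3,2)$ block $DF=\col_i(Q_i(\mathds{1}_N^T\otimes A_{n,i}))$).

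For the equivalence of (i) with (iv) I would instead use the repeated-scalar specialization of the general scaled small-gain result, which (via Proposition~\ref{prop:Stoer}) gives asymptotic stability iff $A_0$ is Hurwitz and $\rho(\widehat\Sigma(0))<1$. The key computation is the DC gain: writing $M_i:=A_{n,i}A_0+A_{r,i}$, the $(i,j)$-block of $-CA_0^{-1}E+F$ is $-M_iA_0^{-1}+A_{n,i}=-A_{r,i}A_0^{-1}$, so the neutral matrices $A_{n,i}$ cancel and $\widehat\Sigma(0)=\col_i(-A_{r,i}A_0^{-1})(\mathds{1}_N^T\otimes I_n)$. Using $\rho(XY)=\rho(YX)$ this collapses to $\rho(\widehat\Sigma(0))=\rho\!\left(-(\textstyle\sum_iA_{r,i})A_0^{-1}\right)$, which is the second Schur condition of \eqref{eq:dkposdksmkdskdskm2}. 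The remaining Schur condition on $\sum_iA_{n,i}$ I would extract from the direct feedthrough: since $\widehat\Sigma(\infty)=F=\col_i(A_{n,i})(\mathds{1}_N^T\otimes I_n)$, the same identity gives $\rho(F)=\rho(\sum_iA_{n,i})$, and well-posedness of the interconnection at $s\to\infty$ forces $\rho(F)<1$; alternatively, positivity yields $0\le F\le\widehat\Sigma(0)$ entrywise (since $\widehat\Sigma(0)-F=-CA_0^{-1}E\ge0$), so Perron--Frobenius monotonicity already gives $\rho(\sum_iA_{n,i})\le\rho(\widehat\Sigma(0))<1$.

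Finally, the equivalences between (ii), (iv) and these core spectral conditions are purely algebraic statements about Metzler and nonnegative matrices: that $A_0$ Hurwitz together with $-(\sum_iA_{r,i})A_0^{-1}$ Schur is the same as Hurwitz stability of $(I-\sum_iA_{n,i})^{-1}(A_0+\sum_iA_{r,i})$ in \eqref{eq:dkposdksmkdskdskm1} once $\sum_iA_{n,i}$ is Schur, which I would settle by invoking Lemma~1 in \cite{Ebihara:11} (or Lemma~7.2 in \cite{Briat:14c}), exactly as in the proof of Theorem~\ref{th:TDS1c}. The step I expect to be the real obstacle is the bookkeeping around the feedthrough $F$: one must verify that the $A_{n,i}$-cancellation in $\widehat\Sigma(0)$ is forced by the structure $M_i=A_{n,i}A_0+A_{r,i}$ rather than accidental, and argue cleanly that the strong-stability requirement $\rho(\sum_iA_{n,i})<1$ --- which is genuinely independent in the non-positive neutral theory --- is here simultaneously a well-posedness condition and a consequence of positivity, so that it can be displayed as an explicit (if redundant) condition matching the formulation of \cite{Ebihara:16,Ebihara:17}.
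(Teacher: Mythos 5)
Your proposal is correct and follows the paper's own strategy almost verbatim: the LFT data of Proposition \ref{prop:neutral1} fed into Theorem \ref{th:colombino} for (iii), and the spectral-radius conditions $\rho(F)<1$, $\rho(-CA_0^{-1}E+F)<1$ with the cancellation $-(A_{n,i}A_0+A_{r,i})A_0^{-1}+A_{n,i}=-A_{r,i}A_0^{-1}$ and $\rho(XY)=\rho(YX)$ for (iv) --- this is exactly the paper's computation. The one place you deviate is statement (ii): the paper obtains it directly from the $L_\infty$ linear program of Theorem \ref{th:briat}, which makes (i) equivalent to Hurwitz stability of the block Metzler matrix with blocks $A_0$, $\row_{i=1}^N(I_n)$, $\col_{i=1}^N(A_{n,i}A_0+A_{r,i})$, $\col_{i=1}^N(\mathds{1}_N^T\otimes A_{n,i})-I_{Nn}$, and then evaluates the Schur complement explicitly via the Sherman--Morrison formula; you instead propose to pass from (iv) to (ii) by the block-Metzler Schur-complement lemma of \cite{Ebihara:11}/\cite{Briat:14c}, as in Theorem \ref{th:TDS1c}. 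Both work, and your route trades the Sherman--Morrison computation for an appeal to that lemma (you would still need to exhibit the $2\times2$ Metzler matrix whose two complements yield \eqref{eq:dkposdksmkdskdskm1} and \eqref{eq:dkposdksmkdskdskm2}, but the collapsed matrix with blocks $A_0$, $I_n$, $\sum_iA_{n,i}A_0+\sum_iA_{r,i}$, $\sum_iA_{n,i}-I_n$ does the job). Your additional observation that $0\le F\le\widehat{\Sigma}(0)$ forces $\rho\bigl(\sum_iA_{n,i}\bigr)\le\rho\bigl(-(\sum_iA_{r,i})A_0^{-1}\bigr)$, so that the first Schur condition in (iv) is actually implied by the other two, is correct and is not noted in the paper.
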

\begin{proof}
\textbf{Proof that (i) is equivalent to (ii).} To prove the equivalence between (i) and (ii), first note that (i) is equivalent to saying that there exist vectors $\lambda\in\mathbb{R}^n_{>0}$, $\mu_i\in\mathbb{R}^n_{>0}$, $i=1,\ldots,N$ such that
    \begin{equation}
      \begin{bmatrix}
            A_0 & \vline & \textstyle\row_{i=1}^N(I_n)\\
            \hline
            \textstyle\col_{i=1}^N(A_{n,i}A_0+A_{r,i}) &\vline & \textstyle\col_{i=1}^N(\mathds{1}_N^T\otimes A_{n,i})-I_{Nn}
      \end{bmatrix}\begin{bmatrix}
        \lambda\\
        \hline
\textstyle     \col_{i=1}^N(\mu_i)
      \end{bmatrix}<0
    \end{equation}
or, equivalently, that the above Metzler matrix is Hurwitz stable. Let $\textstyle M_{12} = \row_{i=1}^N(I_n)$, $M_{21}=\col_{i=1}^N(A_{n,i}A_0+A_{r,i})$ and $\textstyle M_{22}=\col_{i=1}^N(\mathds{1}_N^T\otimes A_{n,i})-I_{Nn}$. Then, the stability of the above matrix is equivalent to the stability of the matrix $M_{22}$ together with the stability of the matrix $A_0-M_{12}M_{22}^{-1}M_{21}$. Clearly, the Metzler matrix $M_{22}$ is Hurwitz stable if and only if the nonnegative matrix $\textstyle \col_{i=1}^N(\mathds{1}_N^T\otimes A_{n,i})$ is Schur stable. Since $\textstyle \col_{i=1}^N(A_{n,i})(\mathds{1}_N^T\times I_n)$ and for two matrices $Z_1,Z_2$, we have that $\rho(Z_1Z_2)=\rho(Z_2Z_1)$, then we get that $M_{22}$ is Hurwitz stable if and only if $\textstyle \rho(\sum_{i=1}^NA_{n,i})<1$. This proves the first part. To prove the second part, we need to evaluate $\textstyle A_0-M_{12}M_{22}^{-1}M_{21}$. Using the Sherman-Morrison formula, we get that
\begin{equation}
  M_{22}^{-1}=-\left(I_{Nn}+\col_{i=1}^N(A_{n,i})\left(I_n-\sum_{i=1}^NA_{n,i}\right)^{-1}\row_{i=1}^N(I_n)\right).
\end{equation}
Hence, we have that $A_0-M_{12}M_{22}^{-1}M_{21}$ is equal to
\begin{equation}
  \begin{array}{rcl}
  A_0+\left[I_n+\left(\sum_{i=1}^NA_{n,i}\right)\left(I_n-\sum_{i=1}^NA_{n,i}\right)^{-1}\right]\sum_{i=1}^N(A_{n,i}A_0+A_{r,i})
  \end{array}
\end{equation}
which then simplifies to
\begin{equation}
  \begin{array}{rcl}
  A_0+\left(I_n-\sum_{i=1}^NA_{n,i}\right)^{-1}\sum_{i=1}^N(A_{n,i}A_0+A_{r,i})
  \end{array}
\end{equation}
and to
\begin{equation}
  \begin{array}{rcl}
  \left(I_n-\sum_{i=1}^NA_{n,i}\right)^{-1}\left(A_0+\sum_{i=1}^NA_{r,i}\right).
  \end{array}
\end{equation}
The proof of the equivalence is completed.

\noindent\textbf{Proof that (i) is equivalent to (iii).} This follows from the bounded real lemma.

\noindent\textbf{Proof that (i) is equivalent to (iv).} Using the fact that (i) is equivalent to saying that $\rho(F)<1$ and $\rho(-CA^{-1}E+F)<1$, we get that (i) is equivalent to saying that $A_0$ is Hurwitz stable, that $\textstyle \rho(\sum_{i=1}^NA_{n,i})<1$ and that
  \begin{equation}
        -\begin{bmatrix}
          A_{n,1}A_0+A_{r,1}\\
          \vdots\\
          A_{n,N}A_0+A_{r,N}
        \end{bmatrix}A_0^{-1}\begin{bmatrix}
          I_n & \ldots & I_n
        \end{bmatrix}+\col_{i=1}^N(\mathds{1}_N^T\otimes A_{n,i})
    \end{equation}
    is Schur stable. Expanding it yields
      \begin{equation}
        -\begin{bmatrix}
        A_{r,1}\\
          \vdots\\
        A_{r,N}
        \end{bmatrix}A_0^{-1}\begin{bmatrix}
          I_n & \ldots & I_n
        \end{bmatrix}
    \end{equation}
    and the proof is completed.
\end{proof}

Interestingly, we can see that, once again, the magnitude of the delays does not affect the stability of the process and that the stability of the system can be inferred from the stability of the system with all the delays set to zero. Another interesting point is regarding the concept of strong stability of a difference equation (see Theorem \ref{th:strongly} and  \cite{Hale:93,Hale:02}). A difference equation of the form
\begin{equation}
  x(t)=\sum_{i=1}^NM_ix(t-h_i)
\end{equation}
is said to be strongly stable if and only if
\begin{equation}
\max_{\omega\in[0,2\pi]^N}\rho\left(\sum_{k=1}^NM_ke^{-i\omega_k}\right)<1.
\end{equation}
The notion of strong stability has been introduced in  \cite{Hale:93,Hale:02}  for the analysis of neutral delay systems as the strong stability of the delay-difference equation acting on the derivative of the state is a necessary condition for the stability of overall neutral delay system and the robustness with respect to arbitrarily small changes in the values of the delays. In the present case, we have that
\begin{equation}
\max_{\omega\in[0,2\pi]^N}\rho\left(\sum_{k=1}^NA_{n,k}e^{-i\omega_k}\right)=\rho\left(\sum_{k=1}^NA_{n,k}\right)
\end{equation}
since the matrices  $A_k^n$ are nonnegative and hence the maximum is attained at $\theta_k=0$. Hence, we can see that the condition of strong stability is encoded in the condition in terms of the well-posedness of the interconnection of system \eqref{eq:uncertain} with the matrices and operators described in Proposition \ref{prop:neutral1}.

Finally, it seems interesting to mention that the conditions of the theorem remains true for the stability of neutral systems with time-varying delays provided that $t-h_i(t)\to\infty$ as $t\to\infty$, $i=1,\ldots,N$.

\subsection{Performance analysis}

Let us address now the performance analysis of neutral systems. Let us start with the following result:
\begin{proposition}\label{prop:neutral2}
The time-delay system
\begin{equation}\label{eq:TDS_neutral2}
\begin{array}{rcl}
    \dot{x}(t)&=&A_0x(t)+\sum_{i=1}^NA_{r,i}x(t-h_i)+\sum_{i=1}^NA_{n,i}\dot{x}(t-h_i)+E_uu(t)\\
    z(t)&=&C_0x(t)+\sum_{i=1}^NC_{r,i}x(t-h_i)+\sum_{i=1}^NC_{n,i}\dot{x}(t-h_i)+F_uu(t)\\
\end{array}
\end{equation}
coincides with the uncertain system \eqref{eq:uncertain} where $A=A_0$, $E=\begin{bmatrix}
  \mathds{1}_N^T\otimes I_n & \vline & 0_N^T\otimes I_n & \vline & E_u
\end{bmatrix}$,
\begin{equation}
  C=\begin{bmatrix}
    \col_{i=1}^N(A_{n,i}A_0+A_{r,i})\\
    \hline
    \col_{i=1}^N(C_{n,i}A_0+C_{r,i})\\
    \hline
    C_0
  \end{bmatrix}F=\begin{bmatrix}
  \col_{i=1}^N(\mathds{1}^T_N\otimes A_{n,i}) & \vline & 0 & \vline & 0\\
  \hline
  \col_{i=1}^N(\mathds{1}^T_N\otimes C_{n,i}) & \vline & 0 & \vline & 0\\
  \hline
  0 & \vline & \mathds{1}_N^T\otimes I_n & \vline & F_u
  \end{bmatrix}
\end{equation}
  \begin{equation*}
    \Delta\in\boldsymbol{\Delta_d}:=\left\{e^{-sh_i}I_{N(n+n_z)}:\ h\ge0,\ \Re(s)\ge0\right\}.
  \end{equation*}
\end{proposition}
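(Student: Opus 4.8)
The plan is to prove the statement by direct substitution, exactly as for the analogous Propositions~\ref{prop:TDS1} and~\ref{prop:neutral1}, exhibiting the neutral input/output system \eqref{eq:TDS_neutral2} as the feedback interconnection \eqref{eq:uncertain}. First I would introduce, for each $i=1,\ldots,N$, two families of loop signals, $w_1^{(i)}(t):=A_{r,i}x(t-h_i)+A_{n,i}\dot{x}(t-h_i)$ carrying the delayed contributions entering the state equation, and $w_2^{(i)}(t):=C_{r,i}x(t-h_i)+C_{n,i}\dot{x}(t-h_i)$ carrying those entering the output. Writing $w_1:=\col_{i=1}^N(w_1^{(i)})$ and $w_2:=\col_{i=1}^N(w_2^{(i)})$, the two lines of \eqref{eq:TDS_neutral2} become $\dot{x}=A_0x+(\mathds{1}_N^T\otimes I_n)w_1+E_uu$ and $z=C_0x+(\mathds{1}_N^T\otimes I_n)w_2+F_uu$. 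This already fixes the first and third block-rows of $C$ and $F$ as well as the matrix $E$, and explains why the $w_2$-column of $E$ is zero, since those signals never re-enter the state equation.

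The delicate step, and the one that distinguishes the neutral case from the retarded case treated in Section~\ref{sec:discrete_main}, is that each loop signal contains $\dot{x}(t-h_i)$, which is not an admissible LFT output by itself. The plan is to remove this derivative by realizing $w_k^{(i)}$ as a pure delay of an undelayed output, $w_k^{(i)}(t)=z_k^{(i)}(t-h_i)$, where $z_1^{(i)}(t):=A_{r,i}x(t)+A_{n,i}\dot{x}(t)$ and $z_2^{(i)}(t):=C_{r,i}x(t)+C_{n,i}\dot{x}(t)$; in the frequency domain this is exactly the action of the scalar block $e^{-sh_i}I$. Substituting the state equation $\dot{x}=A_0x+(\mathds{1}_N^T\otimes I_n)w_1+E_uu$ into these definitions eliminates $\dot{x}$ and, using $A_{n,i}(\mathds{1}_N^T\otimes I_n)=\mathds{1}_N^T\otimes A_{n,i}$, yields $z_1^{(i)}=(A_{n,i}A_0+A_{r,i})x+(\mathds{1}_N^T\otimes A_{n,i})w_1+A_{n,i}E_uu$ and, symmetrically, $z_2^{(i)}=(C_{n,i}A_0+C_{r,i})x+(\mathds{1}_N^T\otimes C_{n,i})w_1+C_{n,i}E_uu$. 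Stacking $\col_i(z_1^{(i)})$, then $\col_i(z_2^{(i)})$, then the performance output $z$, reproduces the three block-rows of the claimed matrices $C$ and $F$.

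I would finish with the converse inclusion: starting from \eqref{eq:uncertain} with $w=\Delta z$ and $\Delta=\diag_i(e^{-sh_i}I)$, the identities $w_k^{(i)}(t)=z_k^{(i)}(t-h_i)$ hold by definition of the delay block, and reinstating $z_1^{(i)}(t)=A_{r,i}x(t)+A_{n,i}\dot{x}(t)$ collapses $(\mathds{1}_N^T\otimes I_n)w_1$ back into $\sum_i[A_{r,i}x(t-h_i)+A_{n,i}\dot{x}(t-h_i)]$, recovering the state equation of \eqref{eq:TDS_neutral2} identically; the same computation with the $C$-coefficients recovers the output equation. I expect the only real obstacle to be the careful bookkeeping of the feedthrough created by the neutral terms: because $\dot{x}$ depends on $u$ through $E_u$, each neutral gain $A_{n,i}$ (resp.\ $C_{n,i}$) forces a static contribution $A_{n,i}E_u$ (resp.\ $C_{n,i}E_u$) in the $u$-to-$z_1$ (resp.\ $u$-to-$z_2$) channel, so these blocks of $F$ must be tracked explicitly rather than discarded. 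The internal-positivity characterization then follows exactly as recorded after Proposition~\ref{prop:neutral1}, from the sign conditions on $A_0$, $A_{n,i}A_0+A_{r,i}$, $A_{n,i}$ and their output analogues.
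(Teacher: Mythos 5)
Your proof is correct and follows exactly the route the paper intends (the paper offers no written proof for this proposition; as for Propositions \ref{prop:TDS1} and \ref{prop:neutral1} it is meant to follow from direct substitution, which is precisely what you carry out). Your careful bookkeeping of the feedthrough is in fact more accurate than the statement itself: substituting $\dot{x}=A_0x+(\mathds{1}_N^T\otimes I_n)w_1+E_uu$ into $z_1^{(i)}=A_{r,i}x+A_{n,i}\dot{x}$ and $z_2^{(i)}=C_{r,i}x+C_{n,i}\dot{x}$ produces the blocks $\col_{i=1}^N(A_{n,i}E_u)$ and $\col_{i=1}^N(C_{n,i}E_u)$ in the third column of $F$, where the proposition writes zeros; that these blocks must indeed be nonzero is confirmed by the paper's own proof of the subsequent performance theorem, whose stability matrix contains exactly $\col_{i=1}^N(A_{n,i}E_u)$ and $\col_{i=1}^N(C_{n,i}E_u)$. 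So your derivation both establishes the proposition and correctly flags a typo in the stated $F$.
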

Moreover, the system is positive if and only if $A_0$ is Metzler and the matrices $E,C$ and $F$ are nonnegative. We then have the following result:
\begin{theorem}
  Assume that the system \eqref{eq:TDS_neutral} is positive. Then, the following statements are equivalent:
  \begin{enumerate}[(i)]
  \item\label{item:neutral:L1}  The system \eqref{eq:TDS_neutral2} is asymptotically stable and has $L_1$-gain smaller than $\gamma_1$ if and only if
      \begin{itemize}
        \item the matrix $\sum_{i=1}^NA_{n,i}$ is Schur stable and
        \item there exists a vector $\lambda\in\mathbb{R}_{>0}^n$ such that
        \begin{equation}
          \begin{bmatrix}
            \lambda\\
            \mathds{1}_{n_y}
          \end{bmatrix}^T\begin{bmatrix}
 S^{-1}\left(A_0+\sum_{i=1}^NA_{r,i}\right) &\vline&  S^{-1}E_u\\
 \hline
 C_0+\sum_{i=1}^NC_{r,i}+\left(\sum_{i=1}^NC_{n,i}\right)S^{-1}\left(\sum_{i=1}^NA_{r,i}\right) & \vline & F_u+\left(\sum_{i=1}^NC_{n,i}\right)S^{-1}E_u
          \end{bmatrix}<\begin{bmatrix}
            0\\
            \gamma_1\mathds{1}_{n_u}
            \end{bmatrix}^T
        \end{equation}
        where $S:=I-\sum_{i=1}^NA_{n,i}$.
      \end{itemize}
  \item\label{item:neutral:L2}   The system \eqref{eq:TDS_neutral2} is asymptotically stable and has $L_2$-gain smaller than $\gamma_2$ if and only if there exists diagonal matrices $P\in\mathbb{D}^n_{\succ0}$, $Q\in\mathbb{D}^{nN}_{\succ0}$ and $R\in\mathbb{D}^{Nn_y}_{\succ0}$ such that the LMI
      \begin{equation}
      \begin{bmatrix}
        A_0^TP+PA_0 & P(\mathds{1}_N^T\otimes I_n) & 0 & PE_u\\
        \star & -Q & 0 & 0\\
        \star & \star & -R & 0\\
        \star & \star & \star & -\gamma_2^2 I_{n_u}
      \end{bmatrix}+\begin{bmatrix}
        C^T\\
        F^T
      \end{bmatrix}\diag(Q,R,I_{n_y})\begin{bmatrix}
        C\\
        F
      \end{bmatrix}\prec0
      \end{equation}
      where the matrices $A,E,C,F$ are defined in Proposition \ref{prop:neutral2}.
    \item\label{item:neutral:Linf} The system \eqref{eq:TDS_neutral2} is asymptotically stable and has $L_\infty$-gain smaller than $\gamma_\infty$ if and only if
        \begin{itemize}
          \item the matrix $\sum_{i=1}^NA_{n,i}$ is Schur stable and
          \item there exists a vector $\lambda\in\mathbb{R}_{>0}^n$ such that
        \begin{equation}
          \begin{bmatrix}
     S^{-1}\left(A_0+\sum_{i=1}^NA_{r,i}\right) &\vline&  S^{-1}E_u\\
         \hline
        C_0+\sum_{i=1}^NC_{r,i}+\left(\sum_{i=1}^NC_{n,i}\right)S^{-1}\left(\sum_{i=1}^NA_{r,i}\right) & \vline & F_u+\left(\sum_{i=1}^NC_{n,i}\right)S^{-1}E_u
          \end{bmatrix}\begin{bmatrix}
            \lambda\\
            \mathds{1}_{n_u}
          \end{bmatrix}<\begin{bmatrix}
            0\\
            \gamma_\infty\mathds{1}_{n_y}
            \end{bmatrix}
        \end{equation}
        where $S:=I-\sum_{i=1}^NA_{n,i}$.
        \end{itemize}
\end{enumerate}
\end{theorem}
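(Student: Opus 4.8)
\noindent The plan is to handle the three gains inside the same input--output template already used for Theorem~\ref{prop:perf_CDD}. First I would realize the neutral system \eqref{eq:TDS_neutral2} as the uncertain interconnection \eqref{eq:uncertain} through Proposition~\ref{prop:neutral2}, and then convert each performance specification $\|u\mapsto z\|_{L_p}<\gamma_p$ into a robust-stability problem by closing the loop with a fictitious full block $u=My$, $\|M\|_p\le\gamma_p^{-1}$. Feeding the augmented uncertainty $\Delta_e=\diag(\Delta,M)$ with the augmented scaling $D_e=\diag(D,\epsilon I)$, $\epsilon>0$, into the norm-specific scaled small-gain theorem---Theorem~\ref{th:ebihara} for statement~\ref{item:neutral:L1}, Theorem~\ref{th:colombino} for statement~\ref{item:neutral:L2}, Theorem~\ref{th:briat} for statement~\ref{item:neutral:Linf}---then yields, after dividing by $\epsilon$ and renaming the state multiplier $\lambda\leftarrow\lambda/\epsilon$, the three stated conditions. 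Since these theorems are non-conservative for internally positive systems, each implication is an equivalence and necessity comes for free.

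I would treat the $L_1$ case first and in detail. Writing the Theorem~\ref{th:ebihara} inequality against the realization $(A,E,C,F)$ of Proposition~\ref{prop:neutral2} introduces the state vector $\lambda\in\mathbb{R}^n_{>0}$, a multiplier attached to the $N$ neutral/retarded delay channels carrying $\col_{i=1}^N(A_{n,i}A_0+A_{r,i})$, a second multiplier attached to the $N$ output-delay channels carrying $\col_{i=1}^N(C_{n,i}A_0+C_{r,i})$, and the $\epsilon$-weighted performance row $\epsilon\mathds{1}_{n_y}$. Because each scalar delay block $e^{-sh_i}I$ commutes with any diagonal matrix, the associated $D$-scaling may be chosen fully diagonal, so both delay multipliers are free positive vectors and can be solved out---exactly as the vector $\nu$ is eliminated in the proof of Theorem~\ref{th:TDS1}.

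The decisive step is this elimination. Reading off the block column of the inequality indexed by the state-delay channels shows that, at the tightest choice, the $N$ sub-multipliers coincide with a single row vector $p$ satisfying $p\,(I-\sum_i A_{n,i})=\lambda^\T+\epsilon\mathds{1}^\T\sum_iC_{n,i}$. Here the hypothesis that $\sum_iA_{n,i}$ is Schur stable is indispensable: it makes $S:=I-\sum_iA_{n,i}$ an invertible $M$-matrix with $S^{-1}\ge0$, so the relation inverts into $p=(\lambda^\T+\epsilon\mathds{1}^\T\sum_iC_{n,i})S^{-1}$ \emph{without reversing any inequality}. Substituting $p$ into the state column and into the performance column, and invoking the algebraic identity $SA_0+\sum_i(A_{n,i}A_0+A_{r,i})=A_0+\sum_iA_{r,i}$ already used in the proof of Theorem~\ref{th:TDSneutral}, collapses the $w=z$ algebraic loop created by the nonzero neutral feedthrough $F$ and reproduces the reduced blocks built around $S^{-1}(A_0+\sum_iA_{r,i})$ and $S^{-1}E_u$, together with the neutral output corrections proportional to $(\sum_iC_{n,i})S^{-1}$. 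This coupled bookkeeping across two families of multipliers, and the reliance on $S^{-1}\ge0$ to keep every sign oriented, is the main obstacle; the remainder is routine algebra. Statement~\ref{item:neutral:Linf} then follows by the dual argument applied to Theorem~\ref{th:briat} (column inequalities in place of row inequalities), which is exactly why it exhibits the same reduced matrices and the same side condition that $\sum_iA_{n,i}$ be Schur stable.

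Statement~\ref{item:neutral:L2} is the most direct and needs no loop elimination. Applying Theorem~\ref{th:colombino} to the realization of Proposition~\ref{prop:neutral2} augmented with the $M$-block and scaled by $D_e=\diag(Q,R,\gamma_2^{-1}I)$, and then taking a Schur complement to expose the $\gamma_2$-weighted input and output rows, produces the displayed LMI in which the neutral feedthrough is retained inside the quadratic term $\begin{bmatrix}C^\T\\F^\T\end{bmatrix}\diag(Q,R,I_{n_y})\begin{bmatrix}C\\F\end{bmatrix}$; no explicit inverse of $S$ is needed, since the KYP-type certificate absorbs the algebraic loop automatically. The Schur stability of $\sum_iA_{n,i}$ is then implied by feasibility rather than imposed separately, which explains why it does not appear as a distinct bullet in statement~\ref{item:neutral:L2}.
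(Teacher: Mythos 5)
Your proposal is correct and follows essentially the same route as the paper: reduce the performance problem to a robust-stability problem via Proposition \ref{prop:neutral2} and the fictitious full block $u=My$, apply the norm-specific scaled small-gain theorem, eliminate the delay-channel multipliers, and use the block inversion of the feedthrough part (yielding $S^{-1}$, legitimate precisely because the Schur stability of $\sum_i A_{n,i}$ is the Hurwitz stability of that block) to obtain the reduced matrices. The only cosmetic difference is that you carry out the elimination in detail for the $L_1$ case and obtain $L_\infty$ by duality, whereas the paper details the $L_\infty$ case and declares $L_1$ analogous; your observation that the Schur condition is absorbed into feasibility of the $L_2$ LMI also matches the paper's (much terser) treatment of statement (ii).
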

\begin{proof}
The statement (ii) can be obtained using the scaled-small gain in the $L_2$-framework. We now prove the statement \eqref{item:neutral:Linf}, statement \eqref{item:neutral:L1} can be proven in exactly the same way. First note that the system \eqref{eq:TDS_neutral2} is asymptotically stable and has $L_\infty$-gain smaller than $\gamma_\infty$ if and only if there exist some vectors $\lambda\in\mathbb{R}_{>0}^n$, $\mu_i^1\in\mathbb{R}_{>0}^n$ and $\mu_i^2\in\mathbb{R}_{>0}^{n_u}$, $i=1,\ldots,N$ such that the following inequality
 \begin{equation}
      \begin{bmatrix}
            A_0 & E_u & \vline & \textstyle\row_{i=1}^N(I_n) & 0\\
            C_0 &F_u & \vline & 0 & \textstyle\row_{i=1}^N(I_{n_y})\\
            \hline
            \textstyle\col_{i=1}^N(A_{n,i}A_0+A_{r,i}) & \textstyle\col_{i=1}^N(A_{n,i}E_u) & \vline & \textstyle\col_{i=1}^N(\mathds{1}_N^T\otimes A_{n,i})-I_{Nn} & 0\\
            \textstyle\col_{i=1}^N(C_{n,i}A_0+C_{r,i})  & \textstyle\col_{i=1}^N(C_{n,i}E_u) & \vline & \textstyle\col_{i=1}^N(\mathds{1}_N^T\otimes C_{n,i}) & -I_{n_yN}
      \end{bmatrix}\begin{bmatrix}
              \lambda\\
              \mathds{1}_{n_u}\\
              \hline
        \textstyle     \col_{i=1}^N(\mu_i^1)\\
        \textstyle     \col_{i=1}^N(\mu_i^2)
      \end{bmatrix}<\begin{bmatrix}
            0\\
            \gamma\mathds{1}_{n_y}\\
            \hline
        0\\
        0\\
         \end{bmatrix}
\end{equation}
is satisfied. Let us denote for simplicity the above matrix by $[\mathcal{M}_{ij}]_{i,j=1,2}$. Solving for the $\mu$ terms yields that the above condition is equivalent to saying that
\begin{equation}
 \left(\mathcal{M}_{11}-\mathcal{M}_{12}\mathcal{M}_{22}^{-1}\mathcal{M}_{21}\right)\begin{bmatrix}
              \lambda\\
              \mathds{1}_{n_u}
              \end{bmatrix}<\begin{bmatrix}
            0\\
            \gamma\mathds{1}_{n_y}\end{bmatrix}
\end{equation}
together with $\mathcal{M}_{22}$ is Hurwitz stable. It is immediate to see that, once again, the latter condition is equivalent to the Schur stability of the matrix $\textstyle\sum_{i=1}^NA_{n,i}$. For compactness, let us denote now $\textstyle\mathcal{A}:= \col_{i=1}^N(\mathds{1}_N^T\otimes A_{n,i})$ and $\textstyle\mathcal{C}:= \col_{i=1}^N(\mathds{1}_N^T\otimes C_{n,i})$ and, using this notation, we get that
\begin{equation}
  \mathcal{M}_{22}=\begin{bmatrix}
  \col_{i=1}^N(\mathds{1}_N^T\otimes A_{n,i})-I_{nN} & \quad 0\\
    \col_{i=1}^N(\mathds{1}_N^T\otimes C_{n,i})& \quad-I_{nN}
\end{bmatrix}
\end{equation}
and, hence, we have that
\begin{equation}
  \mathcal{M}_{22}^{-1}=\begin{bmatrix}
(\mathcal{A}-I_{nN})^{-1} & 0\\
\mathcal{C}(\mathcal{A}-I_{nN})^{-1} & -I_{Nn}
\end{bmatrix}
\end{equation}
together with
\begin{equation}
(\mathcal{A}-I_{nN})^{-1}=  I_{Nn}+\col_{i=1}^N(A_{n,i})\left(I_n-\sum_{i=1}^NA_{n,i}\right)^{-1}\row_{i=1}^N(I_n).
\end{equation}
Letting now $\textstyle S:=I-\sum_{i=1}^NA_{n,i}$, we then get that
\begin{equation}
  S^{-1}\left[\left(A_0+\sum_{i=1}^NA_{r,i}\right)\lambda+E_u\mathds{1}_{n_u}\right]<0
\end{equation}
and similar manipulation gives
\begin{equation}
\left[C_0+\sum_{i=1}^NC_{r,i}+\left(\sum_{i=1}^NC_{n,i}\right)S^{-1}\left(\sum_{i=1}^NA_{r,i}\right)\right]\lambda+\left(F_u+\left(\sum_{i=1}^NC_{n,i}\right)S^{-1}E_u\right)\mathds{1}_{n_u}<\gamma\mathds{1}_{n_y}.
\end{equation}
The proof is now completed.
\end{proof}

As for the other systems, the $L_\infty$ condition remains the same in the case of time-varying delays provided that  $t-h_i(t)\to\infty$ as $t\to\infty$, $i=1,\ldots,N$. On the other hand, the other conditions need to be slightly changed to incorporate the rate of variation of the delays as in the other results.}

\section{Conclusion}

Several recent results regarding the robust stability analysis of uncertain linear positive systems have been unified in a single formulation using a generalization of the structured singular value. Using this generalization, several necessary and sufficient conditions have been obtained and expressed in terms of scaled small-gain theorems involving linear or semidefinite programs. These results have been considered for establishing several results for linear positive systems with constant and time-varying delays. \blue{ It is notably recalled that the time-varying nature of the delay never deteriorates the asymptotic stability of linear positive systems but may deteriorate their $L_p$ stability.}

Interesting extensions could be concerned with the robust stabilization problem using static/dynamic output-feedback or state-feedback controllers using ideas from \cite{Aitrami:07,Briat:11h,Ebihara:12,Naghnaeian:14} or the extension of the results to hybrid systems \cite{Briat:17NAHS}. The design of interval observers is also a potentially interesting follow-up to this work; see e.g. \cite{Mazenc:11,Briat:15g,Efimov:15,Efimov:16b,Briat:17ifacObs}.

\bibliographystyle{unsrtnat}

\end{document}